\documentclass[a4paper,11pt,oneside]{article}
\usepackage{amsmath,amssymb,amsthm}
\usepackage[matrix,graph]{xy}

\openup1.6\jot
\setlength{\topmargin}{0.1\topmargin}
\setlength{\oddsidemargin}{-0.3cm}
\setlength{\evensidemargin}{-1cm}
\setlength{\textheight}{1.02\textheight}
\setlength{\textwidth}{1.3\textwidth}


\newtheorem{thm}{Theorem}[section]
\newtheorem{mainthm}{Main Theorem}

\newtheorem{cor}[thm]{Corollary}

\newtheorem{defn}[thm]{Definition}
\newtheorem{lem}[thm]{Lemma}

\newtheorem{rem}[thm]{Remark}

\def\Coh{\mathop{\mathrm{Coh}}\nolimits}
\def\Spec{\mathop{\mathrm{Spec}}\nolimits}
\def\End{\mathop{\mathrm{End}}\nolimits}

\def\Q{\mathbb{Q}}

\def\C{\mathbb{C}}
\def\R{\mathbb{R}}

\def\e{\varepsilon}

\def\tor{\mathop{\mathrm{tor}}\nolimits}

\def\Hom{\mathop{\mathrm{Hom}}\nolimits}
\def\diag{\mathop{\mathrm{diag}}\nolimits}
\def\Supp{\mathop{\mathrm{Supp}}\nolimits}

\newcommand{\mo}{\mathcal{O}}

\newcommand{\PP}{\mathbb{P}}

\newcommand{\Z}{\mathbb{Z}}
\newcommand{\Pic}{\operatorname{Pic}}

\def\cl{\mathop{\mathrm{cl}}\nolimits}

\def\Ext{\mathop{\mathrm{Ext}}\nolimits}

\def\im{\mathop{\mathrm{im}}\nolimits}

\def\coker{\mathop{\mathrm{coker}}\nolimits}
\def\dim{\mathop{\mathrm{dim}}\nolimits}

\def\deg{\mathop{\mathrm{deg}}\nolimits}
\def\rank{\mathop{\mathrm{rank}}\nolimits}
\def\rk{\mathop{\mathrm{rk}}\nolimits}

\def\id{\mathop{\mathrm{id}}\nolimits}

\def\t{\times}
\def\X{\mathcal{X}}

\def\mbi#1{\boldsymbol{#1}}
\newcommand{\mk}{\mathfrak}
\newcommand{\mc}{\mathcal}
\newcommand{\mb}{\mathbb}

\newcommand{\gr}{\text{gr}}


\def\vol{\mathop{\mathrm{vol}}\nolimits}
\newcommand{\hooklongrightarrow}{\lhook\joinrel\longrightarrow}
\def\Cone{\mathop{\mathrm{Cone}}\nolimits}
\def\DG{\mathop{\mathrm{DG}}\nolimits}

\newcommand{\rig}{\operatorname{rig}}

\begin{document}
\title{ Frobenius morphisms and derived categories on two dimensional toric Deligne--Mumford stacks}
\author{Ryo Ohkawa and Hokuto Uehara}
\date{}


\maketitle
\begin{center}
\emph{Dedicated to Professor Yujiro Kawamata on the occasion of his 60th birthday}
\end{center}
\begin{abstract}
For a toric Deligne--Mumford (DM) stack $\mc X$, 
we can consider a certain generalization of the Frobenius endomorphism.
For such an endomorphism $F\colon \mc X\to \mc X$ 
on a $2$-dimensional toric DM stack $\mc X$,
we show that the push-forward $F_{\ast}\mc O_{\mc X}$ of 
the structure sheaf  generates the bounded derived category 
of coherent sheaves on $\mc X$.

We also choose a full strong exceptional 
collection from the set
of direct summands of $F_{\ast}\mc O_{\mc X}$ 
in several examples of two dimensional toric DM orbifolds 
$\mc X$.
\end{abstract}


\section{Introduction}\label{sec:1}

There are many smooth projective varieties $X$ 
(or more generally algebraic stacks)
whose derived categories $D^b(X)$
have elements generating $D^b(X)$ (see Definition \ref{def:generator}). 
The direct sums of full exceptional collections give
examples of such. Furthermore for a given  
vector bundle $\mc E$, if $\mc E$ generates $D^b(X)$ and it satisfies that 
$\Hom _{D^b(X)}^{i}(\mc E,\mc E)=0$ for $i\ne 0$,  
the derived category $D^b(X)$ is equivalent to the derived category 
of the module category of its endomorphism algebra 
$\End_{D^b(X)}(\mc E)$ (cf. \cite[Lemma~3.3]{TU}), 
and then we can apply the representation theory of
finite dimensional algebras to study $D^b(X)$. It is important  
to find a ``good'' generator of a given triangulated category.

In \cite{Bo}, Bondal announces that there exists a derived 
equivalence between abelian categories 
of coherent sheaves on a toric variety $X$ and constructible sheaves on 
a real torus with a stratification associated to $X$. 
He uses the assertion (without proof)  that the Frobenius 
push-forward $F_*{\mc O_{X}}$
of the structure sheaf  generates 
the bounded derived category $D^b(X)$.
The purpose of this note is to give a rigorous proof of it 
for the $2$-dimensional stacky case:


\begin{mainthm}[=Theorem \ref{thm}]\label{mainthm}
Let $\X$ be a  two dimensional toric Deligne--Mumford (DM) stack. 
Then the vector bundle $F_*{\mc O_{\X}}$ generates $D^b(\X)$ 
for a Frobenius morphism $F$ on $\X$ with a sufficiently divisible degree.
\end{mainthm} 
The generators we find in Theorem \ref{thm} 
get along well with the birational geometry 
in some sense (cf. Lemma \ref{lem}).
Actually we make full use of the birational 
geometry to obtain Theorem \ref{thm}.
The proof of Theorem \ref{thm} is divided into $4$ steps:

\begin{enumerate}
\item
First we reduce the proof  to the case $\X$ is a toric DM orbifold. 
This step works for arbitrary dimensional case.


\item
We introduce the notion of the 
\emph{associated weighted blow up} 
on a toric DM orbifold $\X$ with a weighted blow up 
on a toric variety $X$ which is 
the coarse moduli space of $\X$.
In the two dimensional case we take a toric resolution 
of $X$, and consider the associated birational morphism on $\X$.
We use Lemma \ref{lem} for this morphism 
to reduce the proof to the case $X$ is smooth.

\item
Use the strong factorization theorem 
to connect $X$ and $\PP^2$ by birational morphisms, 
and consider the associated birational morphisms.
We use Lemma \ref{lem} and a technical Lemma 
\ref{keylemma} to reduce the proof to the case $\X$
is a root stack of $\PP^2$. 

\item
Finally we show the statement for the case 
$\X$ is a root stack of $\PP^2$.
\end{enumerate}

The construction of this note is as follows.
In \S\ref{sec:2} we recall the construction and some fundamental 
properties of toric DM stacks, following \cite{BCS}.
In \S\ref{sec:4} we recall the notion of a \emph{root stack} 
and \emph{rigidification} of toric DM stacks.
In \S\ref{sec:3} we explain how to
compute the direct summands of the 
Frobenius push-forward $F_*{\mc O_{\X}}$ 
for toric DM stacks $\X$, 
and put the step (i) into practice. 
In \S\ref{sec:root-plane}  we put the step (iv) into practice.
In \S\ref{sec:5} we define the 
\emph{associated birational morphisms} as in step (ii), and 
put the step (ii) into practice.
In \S\ref{sec:mainthm} we put the step (iii) into practice
and complete the proof of Theorem \ref{thm}.
In \S\ref{sec:6} we use Theorem \ref{thm} 
to show existences of full strong exceptional collections 
in several examples.

We freely use terminology defined in \S2.1 and \S2.2 after these subsections.
We always work over the complex number field $\C$, and note that 
(certain generalized) Frobenius morphisms can be defined on toric varieties 
over $\C$ (or actually any fields, see \S\ref{subsec:Frobenius}).

The authors began this research during their stay 
in the Max Planck Institute for Mathematics.
They appreciate the Max Planck Institute for Mathematics 
for their hospitality and stimulating environment. 
R.O. thanks Zheng Hua, Akira Ishii, Hiroshi Iritani, 
Yoshiyuki Kimura, Hiraku Nakajima, So Okada and Kazushi Ueda 
for useful discussions.
He also thanks Research Institute for Mathematical Sciences, 
Kyoto University, and Institut des Hautes \'Etudse Scientifiques.
He thanks Isamu Iwanari for answering his question about 
Remark~\ref{rem:stackyfan}.
He is supported by GCOE `Fostering top leaders in mathematics',
Kyoto University.
H.U. is supported by the Grants-in-Aid 
for Scientific Research (No.23340011). 
The authors are very grateful to the referees for the careful reading of the paper.

\paragraph{Notation} 
For any vector space $\C^s$ and any $m\in \Z_{>0}$, 
we denote by $\wedge m\colon\C^s\to\C^s$ a map defined 
by $\wedge m(z_1,\ldots,z_s)=(z_1^{m},\ldots,z_s^{m})$.
We denote by $\Z_m$ the cyclic group $\Z/m\Z$.
For any real vector $\mbi v={}^t(v_1,\ldots,v_s)\in\R^s$, we put 
$\lfloor\mbi v\rfloor={}^t(\lfloor v_1\rfloor,\ldots,
\lfloor v_s\rfloor)\in\Z^s$, 
where $\lfloor v_i \rfloor$ is the integer
satisfying $\lfloor v_i\rfloor\le v_i <\lfloor v_i\rfloor +1$.

For a commutative ring $R$ and $l,m\in \Z_{>0}$, 
we denote by $M(l,m,R)$ the set of $l\times m$ matrices  in $R$.
We often identify a matrix $A=(a_{i,j})\in M(l,m,R)$ with a linear map 
$$
R^m\to R^l\quad {}^t(x_1,\ldots, x_m ) \mapsto A{}^t(x_1,\ldots, x_m ).
$$
The symbol $\delta _{i,j}$ stands for the Kronecker delta.
For $a_1,\ldots,a_l\in R$, we denote by $\diag(a_1,\ldots,a_l)$ 
the diagonal matrix $(a_i\delta_{i,j})\in M(l,l,R)$.

For a group homomorphism $\pi \colon N\to N'$ 
between finitely generated abelian groups, 
we denote by $\pi_{\R}\colon N_{\R}\to N'_{\R}$ the associated 
$\R$-linear map between vector spaces $N_{\R}=N\otimes \R$
and $N'_{\R}=N'\otimes \R$. 
For a fan $\Delta$ in $N\otimes \Q$,  
$\Delta(1)$ denotes the set of $1$-dimensional cones in $\Delta$.
$X_\Delta$ denote the toric variety associated with the fan $\Delta$.
  
For a DM stack $\mc X$, we denote by $D^b(\X)$ the bounded 
derived category of coherent sheaves on $\X$.


\section{Toric DM stacks}\label{sec:2}
Toric DM stacks are introduced in \cite{BCS}. 
They are motivated by the Cox's 
construction of toric varieties (\cite{Co}).
In this section we recall some definitions and facts.
See \cite{FMN} and \cite{Iw1} for other definitions.


\subsection{Definitions}\label{sec:2-1}
Let $N$ be a finitely generated abelian group of rank $n$.
We have an exact sequence of abelian groups
$$
0\to N_{\tor}\to N\to \Bar{N}\to 0,
$$
where $N_{\tor}$ is the subgroup of torsion elements in $N$.
A \emph{stacky fan} $\Sigma=(\Delta,\beta)$ 
in $N$ consists of a 
simplicial fan $\Delta$ in $N\otimes \Q$ and a 
group homomorphism  
$$
\beta\colon\Z^s\to N
$$ 
such that for the canonical basis $\mbi f_1,\ldots,\mbi f_s$ of $\Z^s$, 
each $\beta(\mbi f_i)$  generates the cone $\rho_i$ in $N_\R$, where 
$\Delta(1)=\lbrace \rho_1,\ldots,\rho_s\rbrace$.
In this note we always assume that 
$\Delta$ is complete\footnote{Completeness is not 
essential in many arguments below, 
but for simplicity, we assume it.}.

We define a toric DM stack associated to $\Sigma$ as follows.
We take the mapping cone $\Cone(\beta)$ of $\beta$
in the derived category of $\Z$-modules and its derived dual
$\Cone(\beta)^{\star}=\mathbf R\Hom_{\Z}(\Cone(\beta),\Z)$.
We put 
$$
\DG(\beta):=H^1(\Cone(\beta)^{\star})\ \mbox{ and } 
G(=G_{\Sigma}):=\Hom_{\Z}(\DG(\beta),\C^*).
$$ 
%


\begin{rem}
Note that in the Cox's construction \cite{Co}, 
$N$ is torsion free and every $\beta(\mbi f_i)$ is primitive. 
In this case $\DG(\beta)$ is just the 
Chow group $A_{n-1}(X_\Delta)$ of the toric variety $X_\Delta$.
\end{rem}

There exists an exact triangle 
$$
\Cone(\beta)^{\star}\to \mathbf R\Hom_{\Z}(N,\Z)\to \mathbf R\Hom_{\Z}(\Z^s,\Z)(\cong \Z^s),
$$ 
which induces
\begin{equation}\label{cl}
\cl\colon\Z^s \to \DG(\beta).
\end{equation}
Applying $\Hom_\Z( - ,\C^*)$ to it, 
we obtain a map $\Hom_\Z( \cl ,\C^*) \colon G\to (\C^*)^s$.
Hence by the natural action of $(\C^\ast)^s$ on $\C^s$, 
we have a $G$-action on $\C^s$.  

Let $S:=\C[z_1,\ldots,z_s]$ be the coordinate ring of $\C^s$.
For each cone $\sigma$ in $\Delta$, we put 
$z_\sigma=\prod_{\beta_\R(\mbi f_i)\notin \sigma} z_i$ 
and $U_{\sigma}:=\C^s\setminus \lbrace z_{\sigma}=0\rbrace$.
We have a $G$-invariant subspace 
$$
U(=U_{\Sigma}):=\bigcup_{\sigma\in\Delta}U_{\sigma}.
$$
The \emph{toric Deligne--Mumford (DM) stack $\X=\X_{\Sigma}$ 
associated to the stacky fan $\Sigma$} is defined by
the quotient stack 
$$
\X:=[U/G].
$$
This is actually a DM stack by \cite[Proposition 3.2]{BCS}.

When $N$ is torsion free, the stack $\X$ has 
the trivial generic stabilizer group, 
and  we call $\X$ a \emph{toric DM orbifold}. 
In this case, 
let us denote  by $\mbi v_i$ the 
primitive vectors  in $\rho_i\in \Delta(1)$, and also 
denote by $D_i$ (resp. $\mathcal{D}_i$) 
the toric divisors on $X=X_{\Delta}$ (resp. $\X=\X_{\Sigma}$) 
corresponding to the cone $\rho_i$ 
(see their precise definitions in \S \ref{sec:2-2} and \S \ref{subsec:closed}).
Take the positive integers $b_i$ satisfying $\beta(\mbi f_i)=b_i\mbi v_i$.
We often denote the toric DM orbifold $\X$ by 
$$
\X\bigl(X,\sum b_iD_i\bigr).
$$
%


\begin{rem}\label{rem:cox}
If $N$ is torsion free, 
then the map $\Hom_\Z( \cl ,\C^*)$ becomes an inclusion. 
Assume furthermore that every $\beta(\mbi f_i)$ is primitive
and the fan $\Delta$ is non-singular,
i.e. every cone is generated by a subset of a basis of $N$.
Then we can see that
$G$ acts on $U$ freely, and actually the converse is also true. 
In this case, $\X$ is an algebraic space. Since the toric variety 
$X=X_\Delta$ is a coarse moduli space of $\X$ (\cite[Proposition 3.7]{BCS}),
$\X$ coincides with  $X$. 
Thus in this situation, the construction of 
toric DM stacks is same as 
the original construction of toric varieties given in \cite{Co}.

Note that the orbifold $\X\bigl(X,\sum D_i\bigr)$ 
coincides with its coarse moduli  
space $X$ if and only if $X$ is smooth.
In general the orbifold $\X\bigl(X,\sum D_i\bigr)$ 
is the canonical stack with the coarse moduli space $X$.
We put $\mc X^{\text{can}}=\X\bigl(X,\sum D_i\bigr)$. 
\end{rem}

We denote by $\Coh_{G} U$ the category of $G$-equivariant 
coherent sheaves on $U$.  
Then we have an equivalence of categories
\begin{equation}\label{eq}
\Coh\X\cong\Coh_{G} U\quad  \mc F\mapsto \mc F_{U}
\end{equation}
by \cite[(7.21)]{Vi}.
By this equivalence we identify $\Coh\X$ and $\Coh_{G} U$, and define 
$$
H^0(U,\mc F):=H^0(U,\mc F_{U})
$$
for any $\mc F\in\Coh\mc X$.

Define $G^{\vee}$ to be the group   of characters  on $G$, that is, 
$G^{\vee}:=\Hom_{\Z}(G,\C^{\ast})$, and then
note that the map $\cl$ in (\ref{cl}) can be regarded 
as the map from $\Z^s$ to $G^{\vee}$, since
$G^{\vee}$ 
is naturally isomorphic to $\DG (\beta)$. 
The $G$-action on $\C^s=\Spec S$ induces an eigenspace decomposition
\begin{equation}\label{grad}
S=\bigoplus_{\chi\in G^{\vee}}S_{\chi}, 
\text{ where we put }  S_{\chi}:=\bigoplus_{\substack{\mbi k\in \Z^s_{\ge 0}\\ \cl(\mbi k)=\chi}}\C z^{\mbi k}
\end{equation}
and $z^{\mbi k}:=z_1^{k_1}\cdots z_s^{k_s}$ for  $\mbi k={}^t(k_1,\cdots ,k_s)\in \Z^s$.
We call the $G^{\vee}$-graded ring $S$ the \emph{homogeneous coordinate ring} of $\X$. The point in $\X$ corresponding to the $G$-orbit of $(z_1,\ldots,z_s)\in U$ is denoted 
by $[z_1:\cdots:z_{s}]$. We call it the \emph{homogeneous coordinate} of $\X$.

We denote by $\gr S$ the category of $G^\vee$-graded 
finitely generated $S$-modules and by $\tor S$ 
the full subcategory of $\gr S$ consisting of 
$S$-modules annihilated by some powers of the ideal 
$
(z_{\sigma}\mid \sigma\in\Delta).
$
We associate a coherent sheaf $\mc F$ on $\mc X$ with a $G^{\vee}$-graded $S$-module 
$$
M:=\bigoplus_{\chi\in G^{\vee}} H^0(U,\mc F)_{\chi},
$$
where $H^0(U,\mc F)_{\chi}$ is the set of sections having an eigenvalue 
$\chi\in G^{\vee}$ for the action of $G$.
The module $M$ is finitely generated by \cite[Lemma 4.7]{BH1}
and a unique $G^{\vee}$-graded $S$-module, up to $\tor S$, satisfying 
$\Tilde{M}|_{U}\cong \mc F_{U}$ in $\Coh_{G}U$.
Hence we have an equivalence of categories
\begin{equation}\label{grS}
\Coh\X\cong \gr S/\tor S\quad  
\mc F\mapsto M=\bigoplus_{\chi\in G^{\vee}} H^0(U,\mc F)_{\chi}.
\end{equation}
Here $\gr S/\tor S$ is the quotient category 
(cf. \cite[Appendix A.2]{HMS}).


\subsection{Picard groups and toric divisors of toric DM stacks}\label{sec:2-2}
For $\chi\in {G}^{\vee}=\Hom_{\Z}(G,\C^{\ast})$, we define a $G$-action
on the trivial bundle $U\times \C$ by
\begin{align}\label{action1}
 G\times U\times \C\to U\times \C \quad  (g,z,t)\mapsto  (gz,\chi(g)t).
\end{align}
We obtain a ${G}$-equivariant trivial line bundle on $U$,
which defines a line bundle $\mc L_{\chi}$ on $\X$ via the equivalence
(\ref{eq}).
This gives an isomorphism
$
G^{\vee}\cong\Pic \mc X
$
as \cite[Proposition~3.3]{BHu}.
Note that Borisov and Hua show this fact 
under the assumption $N_{tor}=0$,
but a similar proof works in the case $N_{tor}\ne 0$.
Henceforth we often identify $\Pic\X$, $G^\vee$ and $\DG(\beta)$,
and their elements corresponding to each other are denoted as 
\begin{equation}\label{eqn:pic}
\DG(\beta)\cong G^\vee\cong \Pic\X \qquad \Bar{\mbi w}
 \mapsto \chi_{\overline {\mbi w}}=\chi \mapsto \mc L_{\chi}.
\end{equation}

We describe $\Pic \X$ more explicitly.
Take a projective resolution of $N$;
$$
0\to\Z^r\stackrel{A}{\to}\Z^{n+r}\to N\to 0
$$ 
for a matrix $A=(a_{i,j})\in M(n+r,r,\Z)$.
Then the map $\beta\colon \Z^s\to N$ is determined by some matrix $B=(b_{i,j})\in M(n+r,s,\Z)$,
and the mapping cone of $\beta$ is described as  
$$
\Cone(\beta)=\lbrace \cdots \to 0\to \Z^s\oplus\Z^r\stackrel{(B| A)}{\to}\Z^{n+r}\to 0\to \cdots\rbrace,
$$
where the term $\Z^{n+r}$ fits into the degree $0$ part.
Let us denote by $\mbi f_1,\ldots,\mbi f_s$ and $\mbi g_1,\ldots,\mbi g_r$ 
the canonical bases of $\Z^s$ and $\Z^r$ respectively.
Since 
\begin{equation}\label{coker}
\DG(\beta)=H^1(\Cone(\beta)^{\star})= \coker{}^t(B| A),
\end{equation}
we have 
\begin{equation}\label{pic}
\Pic\X\cong \DG(\beta)=
\frac{\Z\mbi f_1^\ast\oplus\cdots\oplus\Z\mbi f_s^\ast\oplus\Z\mbi g_1^\ast\oplus\cdots\oplus\Z\mbi g^\ast_r}
{\left\langle \sum_{j=1}^{s}b_{i,j}\mbi{f^\ast_j}+\sum_{j=1}^ra_{i,j}\mbi{g^{\ast}_j}\mid i=1,\ldots,n+r\right\rangle}.
\end{equation}
We denote by $\Bar{\mbi f^\ast_i}$ and $\Bar{\mbi g^\ast_j}$ the classes of dual bases 
$\mbi f^\ast_i$ and $\mbi g^\ast_j$ in $\DG(\beta)$ respectively.

By (\ref{coker}), the group $G=\Hom_{\Z}(\DG(\beta),\C^{\ast})$ is 
described as 
\begin{equation}\label{G}
G=\left\lbrace\mbi t=\begin{pmatrix}t_1\\\vdots\\t_{s+r}\end{pmatrix}\in(\C^{\ast})^{s+r}\ 
\Bigr|\ 
\prod_{j=1}^s t_j^{b_{i,j}}\times\prod_{j=1}^r t_{s+j}^{a_{i,j}}=1 \text{ for }i=1,\ldots,n+r\right\rbrace.
\end{equation}
The $G$-action on $U\subset \C^s$ is defined by 
\begin{equation}\label{action2}
\mbi t\cdot (z_1,\ldots,z_s)= (t_1z_1,\ldots,t_sz_s)
\end{equation}
for $\mbi t \in G$.
Hence we can see that the group
$$
H(=H_\Sigma):=\{\mbi t\in G\mid t_1=\cdots=t_s=1\}\cong 
\left\lbrace\begin{pmatrix}t_{s+1}\\\vdots\\t_{s+r}\end{pmatrix}\in(\C^{\ast})^{r}\ 
\Bigr|\ 
\prod_{j=1}^r t_{s+j}^{a_{i,j}}=1 \text{ for }i=1,\ldots,n+r\right\rbrace
$$ 
acts on $U$ trivially, and 
it is the generic stabilizer group of  $\X=[U/G]$.
Note that $H$ is a finite group, since $\rk A=r$.

For each $i=1,\ldots, s$,  we have a Cartier divisor
 $\mc{D}_i(=\mc D_i^\X):=[\lbrace z_i=0\rbrace/G]$ (in the notation of
 \S \ref{subsec:closed},
 $\mc{D}_i=[z_i=0]$) on $\X=[U/G]$ corresponding to the ray $\rho_i$, which satisfies that $\mc O_{\X}(\mc{D}_i)\cong \mc L_{\chi_{\Bar{\mbi f^\ast_i}}}$ 
in the notation in (\ref{eqn:pic}). 
We call $\mc{D}_i$ the \emph{toric divisors corresponding to the cone $\rho_i$}.
When $\mc X$ is a variety, we often denote $\mc D_i$ by $D_i$.
Put 
$$
\mbi D(=\mbi D^\X):=\begin{pmatrix}\mc D_1\\\vdots\\ \mc D_s\end{pmatrix}
\text{ and } 
\mbi k\mbi D=\sum_{i=1}^s k_i\mc{D}_i
$$
for $\mbi k=\sum _{i=1}^sk_i\mbi f^\ast_i\in \Z^s$.
We consider a $G^\vee$-graded $S$-module $Sz^{-\mbi k}$ 
(recall that  $z^{-\mbi k}=z_1^{-k_1}\cdots z_s^{-k_s}$ as in (\ref{grad})) corresponding to 
$\mo_\X(\mbi k\mbi D)$ via the isomorphism in (\ref{grS}), where the $G^\vee$-grading of
 $Sz^{-\mbi k}$ is given by $\cl\colon \Z^s\to G^{\vee}$ as in (\ref{grad}).

For $\mbi w\in \Z^{s}\oplus\Z^{r}$ and a 
$G^{\vee}$-graded $S$-module $M$, 
we define a $G^{\vee}$-graded $S$-module 
$$
M(\mbi w)=\bigoplus_{\chi \in G^\vee}M(\mbi w)_{\chi}
$$
by $M(\mbi w)_{\chi}:=M_{\chi+\chi_{\Bar{\mbi w}}}$, 
where $\Bar{\mbi w}$ is the image of $\mbi w$ in $\DG(\beta)$, and 
$\chi_{\Bar{\mbi w}}\in G^{\vee}$ is defined in (\ref{eqn:pic}).
Then we have an isomorphism of $G^{\vee}$-graded $S$-modules 
$
Sz^{-\mbi k}\cong S\left(\mbi k\right).
$

For 
$\mbi l\in \Z^r$ 
denote by 
$$
\mo_{\X}(\mbi k\mbi D)_{\mbi l}
$$
the line bundle which corresponds to the graded $S$-module 
$
S\left(\mbi k+\mbi l\right)
$ 
by (\ref{grS}). 
Here we regard $\mbi k+\mbi l$ as an element of $\Z^s\oplus \Z^r$.
Note that $\mo_{\X}(\mbi k\mbi D)_{\mbi l}$ is the line bundle $\mc L_{\chi_{\overline{\mbi k+\mbi l}}}$ in (\ref{eqn:pic}).
When $\mbi k=\mbi 0$, we put $\mc O_{\mc X,\mbi l}=
\mo_{\X}(\mbi 0\mbi D)_{\mbi l}$.

Henceforth we freely use terminology defined 
in \S\ref{sec:2-1} and \ref{sec:2-2}.
We often use the superscript $'$ for objects associated with a toric DM stack $\X'$. 
For instance, $\Sigma'=(\Delta',\beta')$ stands for the stacky fan in finitely generated abelian group $N'$ defining $\X'$.


\subsection{Closed substacks of toric DM stacks}\label{subsec:closed}

Let $\Sigma=(\Delta,\beta)$ be a stacky fan and $\mc X=\mc X_{\Sigma}$ the associated toric DM stack.
For any non-zero cone $\tau\in \Delta$, we consider the abelian group 
$$
N(\tau):=\frac{N}{\left\langle \beta(\mbi f_i)\mid \rho_i\subset \tau\right\rangle}
$$
and denote by $\pi_{\tau}\colon N\to N(\tau)$ the quotient map. 
We consider the fan 
$$
\Delta_{\tau}:
=\bigl \{ \pi_{\tau, \R}(\sigma) \mid \sigma+\tau\in\Delta, \sigma \in \Delta  \bigr\}
=\bigl \{ \pi_{\tau,\R} (\sigma) \bigm| \tau\subset \sigma \in \Delta \bigr\}
$$
in $N(\tau)_\R$. Then  the map $\pi_{\tau,\R}$ gives  a one to one correspondence between the set
$$
\bigl \{ \rho_i \bigm| \rho_i+\tau\in \Delta, \rho_i\cap \tau=\mbi 0 \bigr\}
$$
 and the set
$$
\Delta_{\tau}(1)=\bigl \{ \pi_{\tau,\R} (\rho_i) \bigm| \rho_i+\tau\in \Delta, \rho_i\cap \tau=\mbi 0 \bigr\}.
$$
By this identification, we regard the set $\Delta_{\tau}(1)$ as a subset of $\Delta(1)$, and hence we have
$
\Z^{\Delta_{\tau}(1)}\subset \Z^{\Delta(1)}=\Z^{s}
$
under the identifications $\Z^{\Delta_{\tau}(1)}\cong \Hom_{\Z}(\Z^{\Delta_{\tau}(1)},\Z)$ and $\Z^{\Delta(1)}\cong \Hom_{\Z}(\Z^{\Delta(1)},\Z)$.
Then we define a stacky fan $\Sigma_{\tau}=(\Delta_{\tau},\beta_{\tau})$ in $N(\tau)$, where
$$
\beta_{\tau}:=\pi_{\tau}\circ \left(\beta |_{\Z^{\Delta_{\tau}(1)}}\right).
$$

The toric DM stack $\X_{\Sigma_{\tau}}$ associated to the stacky fan $\Sigma_{\tau}$ defines a closed substack of $\X$ as follows.

Reorder the set $\Delta(1)$ so that
$\Delta_{\tau}(1)=\lbrace \pi_{\tau, \R}(\rho_{1}),\ldots, \pi_{\tau, \R}({\rho}_{s_\tau}) \rbrace$
and $\tau=\rho_{s_\tau+1}+\cdots +\rho_l$  for some $s_\tau,l$ with $l>s_\tau \ge 0$.
For the homogeneous coordinate $[z_1:\cdots:z_s]$ of $\mc X$, 
we define a subset
$V_{\tau} $ of $U$ and a subgroup $G_{\tau}$ of $G$ by
$$
V_{\tau}:=\lbrace{}^t(z_1,\ldots,z_s)\in U\mid z_{s_\tau+1}=\cdots=z_l=0, z_{l+1}=\cdots =z_{s}=1\rbrace
$$
$$
G_{\tau}:=\lbrace{}^t(t_1,\ldots,t_{s+r})\in G\subset (\C ^*)^{s+r} \mid t_{l+1}=\cdots =t_{s}=1\rbrace.
$$
For a cone $\sigma\in \Delta$ satisfying $\tau \not\subset \sigma$, we have $U_{\sigma}\cap V_{\tau}=\emptyset$, which implies that
$$
V_{\tau}=(\bigcup_{\tau\subset\sigma\in\Delta}U_{\sigma})\cap \{z_{s_\tau+1}=\cdots=z_l=0, z_{l+1}=\cdots =z_{s}=1 \}.
$$
Hence we have a natural isomorphism $U_{\Sigma_{\tau}} \cong V_{\tau}$ induced by the inclusion
$$
\C^{\Delta_{\tau}(1)}=\C^{s_\tau}\hookrightarrow \C^{\Delta(1)}=\C^{s}\quad (z_{1},\ldots,z_{s_\tau})\mapsto 
(z_{1},\ldots,z_{s_\tau},\overbrace{0,\ldots,0}^{l-s_\tau},\overbrace{1\ldots,1}^{s-l}).
$$
Recall the definitions in \S \ref{sec:2-2} of 
$$
A=(\mbi a_1|\ldots |\mbi a_r)\in M(n+r,r,\Z)
\mbox{ and } 
B=(\mbi b_1|\ldots |\mbi b_s)\in M(n+r,s,\Z),
$$ 
where $\mbi a_i,\mbi b_i$ are column vectors in $\Z^{n+r}$.
Define 
$$
A_\tau=(\mbi b_{s_\tau+1}|\ldots|\mbi b_{l}|\mbi a_1|\ldots |\mbi a_r)\in M(n+r,r+l-s_\tau,\Z)
$$
and 
$$
B=(\mbi b_1|\ldots |\mbi b_{s_\tau})\in M(n+r,s_\tau,\Z).
$$
Then the short exact sequence
$$
0\to\Z^{r+l-s_\tau}\stackrel{A_\tau}{\to} \Z^{n+r}\to N(\tau)\to 0
$$
gives a projective resolution of $N(\tau)$, and  $B_\tau$ defines the map $\beta_\tau$ as $B$ defines $\beta$.
By the explicit description  (\ref{G}),
we have an isomorphism 
$
G_{\Sigma_{\tau}}
\cong G_{\tau}.
$
Then the  isomorphism between 
$U_{\Sigma_{\tau}}$ and $V_{\tau}$ becomes   
$G_{\Sigma_{\tau}}\cong G_\tau$-equivariant, and hence 
we get an isomorphism 
$$
\X_{\Sigma_{\tau}}(=[U_{\Sigma_{\tau}}/G_{\Sigma_{\tau}}])\cong [V_{\tau}/G_{\tau}]
$$
of stacks. 
Now put
$$
V(\tau):=\left\lbrace z_{s_\tau+1}=\cdots =z_l=0\text{ in }U\right\rbrace
(\subset \bigcup_{\tau\subset\sigma\in\Delta}U_{\sigma}).
$$
Then we have the following lemma.


\begin{lem}\label{stackisom}
We have an isomorphism $[V_{\tau}/G_{\tau}]\cong [V(\tau)/G]$ of stacks.
In particular, there is a closed embedding 
\begin{equation*}
\iota\colon \X_{\Sigma_{\tau}}\cong [V(\tau)/G]\hookrightarrow \X=[U/G].
\end{equation*}
\end{lem}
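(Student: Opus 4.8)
The plan is to construct a $G$-equivariant isomorphism between $V(\tau)$ and the induced space $G\times_{G_\tau}V_\tau$, and then identify the quotient stacks. First I would observe that $V_\tau\subset V(\tau)$ is the locus where additionally $z_{l+1}=\cdots=z_s=1$, and that $G_\tau\subset G$ is precisely the stabilizer (as a subgroup, not pointwise) of the slice conditions $z_{l+1}=\cdots=z_s=1$: indeed, for $\mbi t\in G$ acting by \eqref{action2}, the orbit of a point of $V(\tau)$ meets the slice $\{z_{l+1}=\cdots=z_s=1\}$, because the coordinates $z_{l+1},\ldots,z_s$ are nonvanishing on $V(\tau)$ (each cone $\sigma\supset\tau$ lies in $U_\sigma$ and the rays $\rho_{l+1},\ldots,\rho_s$ are not faces of $\tau$) and the torus $G$ surjects onto the corresponding coordinate torus via $\mbi t\mapsto(t_{l+1},\ldots,t_s)$. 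The kernel of this last surjection is exactly $G_\tau$. So I would show:
\begin{enumerate}
\item The map $V(\tau)\to\C^{s-l}$, $z\mapsto(z_{l+1},\ldots,z_s)$, lands in $(\C^*)^{s-l}$ and the homomorphism $G\to(\C^*)^{s-l}$, $\mbi t\mapsto(t_{l+1},\ldots,t_s)$, is surjective with kernel $G_\tau$.
\item Hence every $G$-orbit in $V(\tau)$ meets $V_\tau$ in exactly one $G_\tau$-orbit, giving a bijection of orbit sets, and more precisely an isomorphism $[V_\tau/G_\tau]\xrightarrow{\sim}[V(\tau)/G]$ of quotient stacks: the functor sends a $G_\tau$-bundle $P\to T$ with equivariant map $P\to V_\tau$ to the induced $G$-bundle $G\times_{G_\tau}P\to T$ with the map $G\times_{G_\tau}P\to V(\tau)$, $[g,p]\mapsto g\cdot p$, and conversely one restricts along the slice.
\end{enumerate}

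For step (i), surjectivity is the point where I would use the explicit description \eqref{G} of $G$ and \eqref{action2} of the action, together with the projective resolution data $A_\tau,B_\tau$: the surjectivity of $\mbi t\mapsto(t_{l+1},\ldots,t_s)$ from $G$ is equivalent to the injectivity of the dual map on character lattices, which follows because the columns $\mbi b_{l+1},\ldots,\mbi b_s$ of $B$ remain part of a generating set for the relations after the quotient, i.e. from the fact (already used to set up $\Sigma_\tau$) that $\Z^{\Delta_\tau(1)}\subset\Z^{\Delta(1)}$ is a direct summand and $A_\tau$ is a genuine projective resolution of $N(\tau)$. Concretely, $G\to(\C^*)^{s-l}$ is the map dual to the inclusion $\Z^{s-l}\hookrightarrow\DG(\beta)$ sending the $j$-th basis vector to $\overline{\mbi f^*_{l+j}}$, and one checks this is a split injection using \eqref{pic}. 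That $z_{l+1},\ldots,z_s$ are nonvanishing on $V(\tau)$ is immediate from the description $V(\tau)\subset\bigcup_{\tau\subset\sigma}U_\sigma$ together with $z_\sigma=\prod_{\rho_i\not\subset\sigma}z_i$ and the fact that for $\sigma\supset\tau$ and $i>l$ the ray $\rho_i$ is not a face of $\sigma$ (otherwise $\rho_i\subset\tau$, contradicting $\rho_i\cap\tau=\mbi 0$ or the reordering), so $z_i\mid z_\sigma$ and $z_i\neq0$ on $U_\sigma$.

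The main obstacle I anticipate is not any single deep point but the bookkeeping in step (ii): verifying that the two functors between $[V_\tau/G_\tau]$ and $[V(\tau)/G]$ are quasi-inverse, i.e. that $G\times_{G_\tau}V_\tau\to V(\tau)$ is genuinely an isomorphism of schemes (not just a bijection on points) and $G$-equivariantly so. This reduces to the statement that $V(\tau)\to V(\tau)/(\text{slice relation})\cong V_\tau$ exhibits $V(\tau)$ as a trivial $\ker\big(G\to(\C^*)^{s-l}\big)$-torsor over... no — rather that $V(\tau)\cong V_\tau\times(\C^*)^{s-l}$ $G$-equivariantly, where $G$ acts on the second factor through the surjection of step (i). Given that $V_\tau$ sits in $V(\tau)$ as the fiber over $(1,\ldots,1)$ and the $z_{l+1},\ldots,z_s$ coordinates are units giving the projection to $(\C^*)^{s-l}$, this product decomposition is straightforward, and then $G\times_{G_\tau}V_\tau\cong G/G_\tau\times V_\tau\cong(\C^*)^{s-l}\times V_\tau\cong V(\tau)$ finishes it. The closed embedding $\iota$ then follows by composing with the isomorphism $\X_{\Sigma_\tau}\cong[V_\tau/G_\tau]$ established just before the lemma and noting $V(\tau)=\{z_{s_\tau+1}=\cdots=z_l=0\}\cap U$ is $G$-invariant closed in $U$, so $[V(\tau)/G]\hookrightarrow[U/G]=\X$ is a closed immersion.
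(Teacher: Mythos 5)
Your overall strategy---identify $V(\tau)$ with the induced space $G\times_{G_\tau}V_\tau$ and invoke the standard equivalence $[V_\tau/G_\tau]\cong[(G\times_{G_\tau}V_\tau)/G]$---is sound, and it rests on the same two inputs as the paper's argument: the coordinates $z_{l+1},\ldots,z_s$ are invertible on $V(\tau)$, and $G\to(\C^*)^{s-l}$, $\mbi t\mapsto(t_{l+1},\ldots,t_s)$, is surjective with kernel $G_\tau$. Both of these you justify essentially correctly (for the surjectivity, injectivity of $\Z^{s-l}\to\DG(\beta)$, $\mbi e_j\mapsto\overline{\mbi f^*_{l+j}}$, is all that is needed since $\C^*$ is divisible, and injectivity holds because any $\mbi u\in\ker{}^tA$ pairing to zero with $\mbi b_1,\ldots,\mbi b_l$ gives an element of $\Hom(N,\Z)$ vanishing on the rays of a full-dimensional cone containing $\tau$, hence vanishing).

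The gap is in your final identification. You claim that $\Z^{s-l}\to\DG(\beta)$ is a \emph{split} injection and conclude $G\times_{G_\tau}V_\tau\cong G/G_\tau\times V_\tau\cong(\C^*)^{s-l}\times V_\tau$. The splitting fails in general: for $\X=\X(\PP^1\times\PP^1,\,2D_1+2D_2+2D_3+D_4)$ with $\tau=\rho_3$ (so $\mbi v_1=(0,1)$, $\mbi v_2=(0,-1)$ are the neighbours of $\mbi v_3=(1,0)$ and $\mbi v_4=(-1,0)$ is the leftover ray), one computes $\DG(\beta)\cong\Z_2\oplus\Z^2$ with $\overline{\mbi f_4^*}=2\overline{\mbi f_3^*}$ lying in $2\Z$ inside a $\Z$-summand, so the dual surjection $G\to G/G_\tau\cong\C^*$ is $(\epsilon,t_1,t_2)\mapsto t_2^2$ and admits no section; $G\to G/G_\tau$ is then a nontrivial $G_\tau$-torsor and your product decomposition of $G\times_{G_\tau}V_\tau$ does not follow. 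What you actually need is that the action map $G\times_{G_\tau}V_\tau\to V(\tau)$ is an isomorphism: your orbit computation shows it is bijective, and it becomes an isomorphism after pulling back along a finite \'etale cover of $(\C^*)^{s-l}$ over which $G\to(\C^*)^{s-l}$ acquires a section, hence is an isomorphism by descent. This \'etale-local translation into the slice $V_\tau$ is precisely how the paper proves essential surjectivity (it refines the covers $W_i$ to $W_i'$ for exactly this reason), while its full-faithfulness step corresponds to your observation that the transporter of a point of $V_\tau$ back into $V_\tau$ lies in $G_\tau$. So the route is genuinely the same as the paper's once the splitting claim is replaced by the \'etale-local argument.
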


\proof
Embeddings $V_{\tau}\subset V(\tau)$, $G_{\tau}\subset G$ gives a morphism 
$\varphi\colon [V_{\tau}/G_{\tau}]\to [V(\tau)/G]$ of stacks.
We show that  $\varphi$ is an isomorphism.

To show that $\varphi$ is essentially surjective, since $[V(\tau)/G]$ is a sheaf, 
it is enough to show that for any object $P$ of $[V(\tau)/G](W)$ over a scheme $W$ there exists an \'{e}tale covering 
$\lbrace W_i\to W\rbrace_{i}$ of $W$ such that $P|_{W_i}$ is in the essential image of 
$\varphi _{W_i}\colon [V_{\tau}/G_{\tau}](W_i)\to [V(\tau)/G](W_i)$
 for any $i$.
First take an \'{e}tale covering such that $P|_{W_i}$ is given by a trivial principal $G$-bundle 
 $W_i\times G\to W_i$ and a $G$-equivariant morphism $\psi$:
$$
V(\tau) \stackrel{\psi}\longleftarrow W_i\times G \longrightarrow W_i.
$$
Furthermore for each $i$, we may assume that there exists 
a cone $\sigma\in\Delta$ satisfying $\tau\subset\sigma$,
such that $\im \psi\subset U_{\sigma}\cap V(\tau)$.

Then by the explicit description (\ref{G}) and (\ref{action2}), we can take an  \'{e}tale cover $W'_i\to W_i$ 
such that the morphism 
$$
W'_i\longrightarrow W_i\stackrel{\psi|_{W_i\times \id_G}}{\longrightarrow}U_{\sigma}\cap V(\tau)
$$
is also decomposed as  
$$
W'_i\longrightarrow U_{\sigma}\cap V_{\tau}{\hooklongrightarrow}U_{\sigma}\cap V(\tau)
$$
 up to the $G$-action on $U_{\sigma}$. 
Hence $P|_{W'_i}$ belongs to the essential image of $\varphi _{W'_i}$.

Note that for the $G$-action on $U$ the subgroup of elements of $G$ keeping $V_{\tau}$ is equal to $G_{\tau}$.
Hence we can see that $\varphi$ is fully faithful. This completes the proof.
\endproof

We call $\X_{\Sigma_{\tau}}$ 
the \emph{toric substack  of $\mc X$ 
corresponding to the cone $\tau$}.
We often denote it by
$$
[ z_{s_{\tau}+1}=\cdots =z_{l}=0].
$$
%
For $\mbi k={}^t(k_1,\ldots,k_{s})\in \Z^{s}$
and $\mbi l={}^t(l_1,\ldots,l_{r})\in \Z^{r}$,
we put 
$$
\mbi k_{\tau}={}^t(k_1,\ldots,
k_{s_{\tau}})\in \Z^{s_{\tau}},\quad
\mbi l_{\tau}={}^t(k_{s_{\tau}+1},\ldots,k_{l},
l_{1},\ldots,l_{r})\in \Z^{l-s_\tau+r}
$$
and $\mbi D_{\tau}={}^t(\iota^{\ast}\mc D_{1},
\ldots, \iota^{\ast}\mc D_{s_{\tau}})$.
By Lemma~\ref{stackisom}, we have
\begin{equation}\label{emb}
\iota_{\ast}\mc O_{\mc X_{\Sigma_\tau}}
(\mbi k_{\tau}\mbi D_{\tau})_{\mbi l_{\tau}} 
=
\mc O_{\mc X}\left(\mbi k\mbi D\right)_{\mbi l}\otimes\iota_*\mc O_{\mc X_{\Sigma_\tau}},
\end{equation}
since we can compute push-forward by the embedding 
$[V(\tau)/G]\hookrightarrow[U/G]$ as in the
proof of \cite[Theorem~9.1]{BH1}.


\subsection{Morphisms between toric DM stacks}\label{sec:2-3}
In this subsection
we consider a morphism between toric DM stacks $\X$ and $\X'$.

First let us consider the following morphism of triangles of the derived category of $\Z$-modules: 
%
\begin{equation}\label{mor}
\xymatrix{ 
\cdots \ar[r]&\Z^s\ar[r]^{\beta} \ar[d]_{\gamma_1=C}&N\ar[d]^{\gamma_2}\ar[r]&\Cone (\beta)\ar[d]^{\gamma_3}\ar[r]&\cdots\\
\cdots \ar[r]&\Z^{s'} \ar[r]^{\beta'}                       & N'\ar[r]      &                   \Cone (\beta')\ar[r]&\cdots,
}
\end{equation}
where $\gamma_1$ is a matrix $C=(c_{i,j})\in M(s',s,\Z)$.
We assume that
\begin{itemize}
\item[(T1)]
$\gamma_2$ induces a map $\Bar{\gamma}_2\colon (\Bar{N},\Delta)\to (\Bar{N'},\Delta')$ of fans,
 that is, for every cone $\sigma\in\Delta$, there exists a cone $\sigma'\in\Delta'$ satisfying
${\gamma}_{2,\R}(\sigma)\subset\sigma'$.
\end{itemize}
Take $j$ satisfying $(\gamma_2\circ\beta)_\R (\mbi f_j)\in \sigma'$  
for some cone $\sigma'\in \Delta'$. 
We furthermore assume that
\begin{itemize}
\item[(T2)]
we have  
\begin{equation*}
\gamma_1(\mbi f_j)=\sum _{i=1}^{s'}c_{i,j}\mbi f'_i\in\bigoplus_{\beta'_{\R}(\mbi f_i')\in\sigma'}\Z_{\ge 0}\mbi f'_i.
\end{equation*}
\end{itemize}
As its consequence, we know that every $c_{i,j}$ is non-negative. 
Under these assumptions we construct a morphism $\varphi\colon \X=[U/G]\to\X'=[U'/{G'}]$ of DM stacks as follows.

First consider the morphism 
\begin{equation*}
\gamma _{1,\C^*}:=\gamma _1\otimes_\Z \id_ {\C^*} \colon (\C^*)^s\to (\C^*)^{s'}
\quad (z_j) \mapsto \left(z_i'\right)=\left( \prod_jz_j^{c_{i,j}}\right).
\end{equation*}
We can naturally extend it to a morphism $\C^s\to {\C}^{s'}$, which is also denoted by $\gamma _{1,\C^*}$.
Then ${\gamma _{1,\C^*}}$ induces the morphism $U\to U'$ as follows:
take a point 
$p=(p_j)\in U_\sigma= \C^s\backslash \{z_\sigma =0\}$
for some cone $\sigma \in\Delta$. If $p_j=0$, 
then $j$ satisfies that $\beta_{\R}(\mbi f_j)\in \sigma$.
Take a cone $\sigma'$ such that $\gamma_{2,\R}(\sigma)\subset \sigma'$. 
Then the second assumption above implies that $c_{i,j}=0$
for any $i$ with $\beta'_\R (\mbi f'_i) \notin \sigma'$. 
This means that ${\gamma _{1,\C^*}}(p)\in U'_{\sigma'}$.
We again denote this morphism by
$$
\gamma _{1,\C^*}\colon U\to U'.
$$

On the other hand, the map $\gamma_3$ defines 
a group homomorphism 
$\rho=\Hom (H^1(\gamma_3^\star),\C^*)$
$$
\rho\colon G=\Hom_\Z (H^1(\Cone (\beta) ^\star),\C^*)
\to{G'}=\Hom _\Z(H^1(\Cone (\beta ') ^\star),\C^*).
$$
Then we have the following commutative diagram
\begin{equation}\label{dai}
\xymatrix{ G\times U \ar[r]\ar[d]_{\rho\times \gamma _{1,\C^*}}
&U\ar[d]^{\gamma_{1,\C^*}}\\
{G'}\times U' \ar[r]& U',}
\end{equation}
where the horizontal arrows are defined by the actions of $G$ 
on $U$ and $G'$ on $U'$.
This diagram determines a morphism 
$$
\varphi\colon\X=[U/G]\to \X'=[U'/{G'}]
$$
of DM stacks.


\begin{rem}\label{rem:stackyfan}
By \cite[Theorem 1.2]{Iw2}, we see that 
giving torus equivariant morphisms 
between toric DM orbifolds is equivalent to 
giving homomorphism $\gamma_1,\gamma_2$ 
as in (\ref{mor}) satisfying assumptions (T1) 
and (T2). 
\end{rem}

Let us take projective resolutions of $N$ and $N'$;
$$ 
0\to \Z^r\stackrel{A}{\to}\Z^{n+r}\to N\to 0 \mbox{ and }
0\to \Z^{r'}\stackrel{A'}{\to}\Z^{n'+r'}\to N'\to 0
$$
for $A\in M(n+r,r)$ and $A'\in M(n'+r',r')$.
The map  $\gamma_2$ is determined by matrices 
$$
D=(d_{i,j})\in M(n'+r',n+r,\Z),\quad  E=(e_{i,j})\in M(r',r,\Z),
$$
which make the diagram in the right square commutative 
(but not necessarily in the left):
$$
\xymatrix{ \Z^{s}\ar[d]_{C} \ar[r]^{B}&
\Z^{n+r}\ar[d]^{D}\ar@{}[dr]|\circlearrowleft&\Z^{r}\ar[d]^{E} \ar[l]_{A} \\
\Z^{s'} \ar[r]_{B'}&\Z^{n'+r'}& \Z^{r'} \ar[l]^{A'}.}
$$
Here $B\in M(n+r,s)$ and $B'\in M(n'+r',s')$ give lifts of the maps
 $\beta\colon \Z^s\to N$ and $\beta'\colon \Z^{s'}\to N'$  respectively.
By the commutativity of (\ref{mor}), we have a homotopy homomorphism $F=(f_{i,j})\colon \Z^s\to\Z^{r'}$ 
such that $DB-B'C=A'F$.
The maps $\gamma_{1,\C^*}$ and $\rho$ are described as follows: 
\begin{equation}\label{eqn:UG}
\begin{split}
&\gamma_{1,\C^*} \colon U\to U'\quad (z_j)\mapsto \left(z_i'\right)=\left( \prod_jz_j^{c_{i,j}}\right)\\
&\rho \colon G\to {G'}\quad (t_j)\mapsto \left(t_i'\right),
\end{split}
\end{equation}
where we put 
$$
t_i':=\prod_{j=1}^{s} t_j^{c_{i,j}}\text{ for }i=1,\ldots,s'
\quad \text{ and } \quad
t'_{s'+k}:=\prod_{j=i}^st_j^{f_{k,j}}\prod_{l=1}^r t_{s+l}^{e_{k,l}}\text{ for }k=1,\ldots,r',
$$
and $z_j^{c_{i,j}}=1$ when $z_j=c_{i,j}=0$. 
Consequently we have
\begin{equation}\label{eqn:pullback}
\varphi^*\mc O_{\X'}(\mc D_i')=\mc O_{\X}(\sum_{j}c_{i,j}\mc D_j) 
\text{ and } \varphi^*\mc O_{\X',{\mbi g'^\ast_k}}
=\mc O_{\X}(\sum_{j}f_{k,j}\mc D_j)_{\sum_le_{k,l}\mbi g_l^{\ast}}.
\end{equation}
For the homogeneous coordinate ring $S=\C[z_1,\ldots,z_s]$ 
(resp. $S'=\C[z'_1,\ldots,z'_{s'}]$) of $\mc X$ (resp. $\mc X'$), 
we define the map 
$$
\varphi^{\sharp}\colon S'\to S \quad (z_i')\mapsto \prod_jz_j^{c_{i,j}}
$$
so that $\varphi^{\sharp}(z'^{\mbi k'})=z^{\gamma _1^*(\mbi k')}$. 
In particular, we have  
$\varphi^{\sharp}(S'_{\chi'})\subset S_{\rho^{\vee}(\chi')}$,
where 
$$
\rho^{\vee}\colon {G'}^{\vee}\to G^{\vee}
$$
is the $\C^*$-dual map $\Hom _\Z(\rho,\C^*)$ of $\rho$. 
We have the following commutative diagram:
\begin{equation}\label{eqn:dual}
\xymatrix{ 
\cdots\ar[r]&\Hom_{\Z}(N,\Z) \ar[r]   & \Z^{s}\ar[r]^{\cl\qquad}  & \DG(\beta)=G^\vee\ar[r]& \Ext_{\Z}^1(N,\Z) \ar[r]&\cdots\\
\cdots\ar[r]&\Hom_{\Z}(N',\Z) \ar[u]^{\gamma_2^*} \ar[r]& \Z^{s'}\ar[u]_{\gamma_1^*={}^tC}\ar[r]^{\cl\qquad}   
 & \DG(\beta ')=G'^\vee \ar[u]_{\rho^\vee}\ar[r]&\Ext_{\Z}^1(N',\Z)\ar[u]^{\gamma_2^*[1]}\ar[r]&\cdots .}
\end{equation}

If a ${G'}^{\vee}$-graded $S'$-module $M=\bigoplus_{\chi'\in {G'}^\vee}M_{\chi'}$ gives a coherent sheaf $\mc F$ on $\X'$
by  (\ref{grS}), 
then a $G^{\vee}$-graded $S$-module $M\otimes_{S'}S$ corresponds 
to the pull-back $\varphi^{\ast}\mc F$, where the grading is given by 
$$
(M\otimes_{S'}S)_{\chi}
=\sum_{\substack{\eta'\in {G'}^{\vee}}}M_{\eta'}\otimes_{S'} S_{\chi-\rho^{\vee}(\eta')}
$$
for each $\chi\in G^{\vee}$.

On the other hand, a $G^{\vee}$-graded $S$-module 
$L=\bigoplus_{\chi\in G^{\vee}}L_{\chi}$
gives a coherent sheaf $\mc G$ on $\X$, 
and the sheaf  $\varphi_{\ast}\mc G$ corresponds  
a ${G'}^{\vee}$-graded $S'$-module ${}_{S'}N$ 
whose grading is given by 
%
$$
({}_{S'}L)_{\chi '}=L_{\rho^{\vee}(\chi')}
$$
for $\chi'\in {G'}^{\vee}$.
Here $S'$-module structure of ${}_{S'}N$ is given by $\varphi^{\sharp}$.
This $S'$-module structure is compatible 
with the ${G'}^{\vee}$-grading of ${}_{S'}L$, 
since $\varphi^{\sharp}(S'_{\chi'})\subset S_{\rho^{\vee}(\chi')}$. 

The functor ${}_{S'}(-)\colon\gr S/\tor S\to \gr S'/\tor S'$ 
is the right adjoint functor
of $(-)\otimes_{S}S'$. 
Since $\varphi_{\ast}\colon \Coh\mc X\to \Coh \mc X'$ 
is the right adjoint functor of
$\varphi^{\ast}\colon \Coh\mc X'\to \Coh \mc X$, 
both functors ${}_{S'}(-)$ and $\varphi_{\ast}$ 
must coincide by the correspondence in (\ref{grS}). 
Hence
for any $\mc G\in\Coh\mc X$, we have a natural isomorphism 
\begin{equation}\label{pfwd}
H^0(U',\varphi_{\ast}\mc G) \cong
\bigoplus_{\chi'\in {G'}^{\vee}}
H^0(U,\mc G)_{\rho^{\vee}(\chi')}
\end{equation}
in $\gr S'/\tor S'$.


\begin{lem}\label{van}
Take a morphism $\varphi\colon\mc X\to\mc X'$ determined by matrices 
$C=(c_{i,j})\in M(s',s,\Z), D\in M(n'+r',n+r,\Z)$ and $E\in M(r',r,\Z)$ as above.
Then we have the following.
\begin{enumerate}
\item
Suppose that for each $i$ there exists $j$ such that $c_{i,j}>0$, and entries of the $j$-th column vector of $C$ are all zero except $c_{i,j}$.
Furthermore,  in the diagram (\ref{eqn:dual}), we assume that $\gamma_2^*$ is surjective, 
and $\gamma_2^*[1] $ is injective. 
Then we have $\varphi_{\ast}\mo_{\X}=\mo_{\X'}$.
\item 
Assume that
$s=s'$, $C=(c_i\delta_{i,j})$ for $c_i\in \Z_{> 0}$, and that the map of fans 
$\bar{\gamma_2}\colon (\Bar{N},\Delta) \to (\Bar{N}',\Delta')$ is an isomorphism. 
Then we have $\R^{i}\varphi_{\ast}\mc F=0$ for any coherent sheaf $\mc F$ on $\mc X$ and $i>0$.
\end{enumerate}
\end{lem}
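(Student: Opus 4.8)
The plan is to prove both statements by working in the category $\gr S/\tor S$ via the equivalence \eqref{grS}, using the explicit description \eqref{pfwd} of the push-forward together with the commutative diagram \eqref{eqn:dual}. For part (i), the structure sheaf $\mo_{\X}$ corresponds to $S$ with its natural grading, so by \eqref{pfwd} the module $\varphi_*\mo_{\X}$ is, up to $\tor S'$, the graded $S'$-module whose $\chi'$-component is $S_{\rho^{\vee}(\chi')}$. Our goal is to show this coincides with $S'$ in $\gr S'/\tor S'$, i.e. that $\dim_{\C} S_{\rho^{\vee}(\chi')} = \dim_{\C} S'_{\chi'}$ for all $\chi'$ outside the irrelevant locus, with the $S'$-action matching up. First I would use the hypothesis on the columns of $C$: for each $i$ there is a variable $z_j$ whose $\cl$-class equals $\chi_{\bar{\mbi f'^\ast_i}}$ pulled back — more precisely $\varphi^{\sharp}(z'_i) = z_j^{c_{i,j}}$ is a genuine monomial in a single variable. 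This is what forces $\varphi^{\sharp}\colon S' \to S$ to be injective with image large enough; the monomials $z'^{\mbi k'}$ map to distinct monomials $z^{\gamma_1^*(\mbi k')}$, and injectivity of $\cl\circ\gamma_1^* = \rho^{\vee}\circ \cl'$ on the relevant cones (which follows from $\gamma_2^*$ surjective, $\gamma_2^*[1]$ injective, hence $\rho^\vee$ injective by the five lemma applied to \eqref{eqn:dual}) shows that $S'_{\chi'} \to S_{\rho^\vee(\chi')}$ is an isomorphism after localizing at the irrelevant ideal. I would make this precise by checking it chart-by-chart on each $U'_{\sigma'}$, where the calculation reduces to a statement about semigroup rings of cones.

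For part (ii), the hypotheses are much more rigid: $s = s'$, $C = \diag(c_1,\dots,c_s)$ with all $c_i > 0$, and $\bar\gamma_2$ an isomorphism of fans. This is exactly the local shape of a (generalized) Frobenius-type morphism: on each chart it looks like $\wedge\mbi c$ on affine space modulo a finite group, which is a finite flat morphism, hence affine, hence $\R^i\varphi_*$ vanishes on coherent sheaves for $i>0$. The clean way to argue: since $\bar\gamma_2$ is an isomorphism of fans and $C$ is diagonal, $\varphi$ is representable and finite — indeed on the chart over $U'_{\sigma'}$ the map $\gamma_{1,\C^*}$ restricted to $U_\sigma$ is the map $(z_j)\mapsto (z_j^{c_j})$, which exhibits the coordinate ring of the source chart as a finite free module over that of the target chart, and the group actions are compatible. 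A finite morphism is affine, and for an affine morphism $\varphi$ one has $\R^i\varphi_*\mc F = 0$ for all $i>0$ and all quasi-coherent $\mc F$ (this holds for affine morphisms of Deligne--Mumford stacks as well, since it can be checked after smooth base change to a scheme). So the content is just verifying affineness, which the explicit charts give directly.

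The main obstacle, I expect, is part (i): one must be careful that the isomorphism $S'_{\chi'} \xrightarrow{\sim} S_{\rho^\vee(\chi')}$ holds not globally but only modulo the torsion subcategories, and pinning down exactly which gradings $\chi'$ are "relevant" requires translating the vanishing/injectivity hypotheses on $\gamma_2^*$ and $\gamma_2^*[1]$ into the statement that $\rho^\vee$ is injective and that the $\cl$-images of the generating monomials line up. The column hypothesis on $C$ is the geometric input that makes $\varphi$ look, near each toric divisor $\mc D'_i$, like a root-stack or ramified-cover construction pulling $\mc D'_i$ back to $c_{i,j}\mc D_j$ along a single ray, and it is what guarantees $\varphi^\sharp$ has no "extra" kernel or cokernel after passing to the quotient category. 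Part (ii) should be essentially formal once the chart description is written down, so I would present it briefly and spend the bulk of the write-up on (i), organizing it as: (a) reduce to a statement on each affine chart via \eqref{grS} and \eqref{pfwd}; (b) on a chart, identify $\varphi$ with a finite map of affine toric varieties modulo finite groups; (c) use the hypotheses to deduce $\rho^\vee$ injective and the chart-level isomorphism of graded pieces; (d) conclude $\varphi_*\mo_{\X} = \mo_{\X'}$.
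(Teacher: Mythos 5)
Your overall framework for (i) --- computing $\varphi_{\ast}\mo_{\X}$ as the regraded module ${}_{S'}S$ via (\ref{pfwd}) and comparing the graded pieces $S'_{\chi'}$ and $S_{\rho^{\vee}(\chi')}$ --- is exactly the paper's, but the decisive step is missing, and the one concrete deduction you draw from the hypotheses is not the one that is needed. From ``$\gamma_2^*$ surjective and $\gamma_2^*[1]$ injective'' you extract only that $\rho^{\vee}$ is injective (true, by the four lemma applied to (\ref{eqn:dual})). That, together with injectivity of ${}^tC$ on monomials, only yields that $\varphi^{\sharp}\colon S'_{\chi'}\to S_{\rho^{\vee}(\chi')}$ is injective. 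The real content is surjectivity: every monomial $z^{\mbi k}$ with $\cl(\mbi k)=\rho^{\vee}(\chi')$ must lie in the image of $\varphi^{\sharp}$. This is where the hypotheses actually enter, and differently from how you use them: the column condition on $C$ gives the precise statement $({}^tC)^{-1}(\Z^s_{\ge 0})=\Z^{s'}_{\ge 0}$ (not merely ``image large enough''); the surjectivity of $\gamma_2^*$ gives $\ker(\cl\colon\Z^s\to G^{\vee})=\im\bigl(\Hom_{\Z}(N,\Z)\to\Z^s\bigr)\subset {}^tC(\Z^{s'})$, which forces every element of $\mathfrak{S}=\lbrace\mbi k\in\Z^s_{\ge 0}\mid\cl(\mbi k)=\rho^{\vee}(\chi')\rbrace$ to be of the form ${}^tC\mbi k'$ once $\mathfrak{S}'=\lbrace\mbi k'\in\Z^{s'}_{\ge 0}\mid\cl(\mbi k')=\chi'\rbrace$ is nonempty; and the injectivity of $\gamma_2^*[1]$ is used in the equivalent form $(\rho^{\vee})^{-1}(\cl(\Z^s))=\cl(\Z^{s'})$ to rule out $\mathfrak{S}\neq\emptyset$ while $\mathfrak{S}'=\emptyset$. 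These three points give a bijection $\mathfrak{S}'\to\mathfrak{S}$ for every $\chi'$, hence an honest isomorphism $S'\cong{}_{S'}S$ of graded modules --- no localization at the irrelevant ideal and no chart-by-chart semigroup analysis is required. As written, your plan for (i) would stall exactly at the surjectivity step.

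For (ii) your route is genuinely different from the paper's and is essentially sound. The paper stays inside the quotient category: it observes that $\varphi^{\sharp}(z'_{\sigma})=z^{\mbi k}z_{\sigma}$ with $k_i=0$ whenever $\beta_{\R}(\mbi f_i)\in\sigma$, so the restriction-of-scalars functor ${}_{S'}(-)$ preserves $\tor$ and, being exact on module categories, descends to an exact functor $\gr S/\tor S\to\gr S'/\tor S'$; hence $\varphi_{\ast}$ is exact. Your argument via affineness of the chart maps $(z_j)\mapsto(z_j^{c_j})$ reaches the same conclusion more geometrically. The one caveat is that $\varphi$ need not be representable: $\ker\rho$ can be nontrivial (e.g.\ when $N$ has more torsion than $N'$), so calling $\varphi$ ``finite'' is an abuse, and you need the characteristic-zero fact that pushing forward along a quotient by a finite group (or along a gerbe) is exact. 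Over $\C$ this is harmless; the paper's algebraic argument is precisely a way of sidestepping that issue.
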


\proof
(i) %
For any $\chi'\in {G'}^{\vee}$, we consider sets 
$$
\mathfrak{S}'=\lbrace\mbi k'\in\Z^{s'}_{\ge0}\mid \cl(\mbi k')=\chi'\rbrace, \quad\mathfrak{S}=\lbrace\mbi k\in\Z^s_{\ge 0}\mid \cl(\mbi k)=\rho^{\vee}(\chi')\rbrace.
$$ 
By the assumption on $C$, we see that ${}^tC\colon \Z^{s'}\to \Z^{s}$ is injective, and $({}^tC)^{-1}(\Z^s_{\ge 0})= \Z^{s'}_{\ge 0}$.
When $\mathfrak{S}'\neq \emptyset$, in the diagram (\ref{eqn:dual}), combining the surjectivity of $\gamma_2^*$ and the injectivity of ${}^tC$, we conclude that the map ${}^tC$ induces a bijection between $\mathfrak{S}'$ and $\mathfrak{S}$.

On the other hand, the injectivity of the map  $\gamma_2^*[1]$ is equivalent to 
the condition ${\rho^{\vee}}^{-1}(\cl (\Z^s))=\cl(\Z^{s'})$. So
if $\mathfrak{S}'=\emptyset$, then $\mathfrak{S}=\emptyset$.
Hence we obtain  an isomorphism $S'\cong {}_{S'}S$ by  the map $\varphi^{\sharp}$. 
This completes the proof. 

(ii) For any $\sigma\in \Delta'=\Delta$, we have $\varphi^{\sharp}(z'_{\sigma})=z^{\mbi k}z_{\sigma}$ 
for some $\mbi k\in\Z_{\ge 0}^{s}$, where $\mbi k=(k_i)$ satisfies that $k_i=0$ if $\beta_{\R}(\mbi f_i)\in \sigma$.
Hence if we take a $G^{\vee}$-graded $S$-module $L\in\tor S$, then ${}_{S'}L$ also belongs to $\tor S'$.
We see that an exact sequence in the abelian category $\gr S/\tor S$ is given by an exact sequence of graded $S$-modules
$$
0\to L_1\stackrel{f}{\to} L_2\stackrel{g}{\to} L_3
$$ 
such that $\coker g$ belongs to $\tor S$.
Hence $\varphi_{\ast}\colon \Coh \mc X\to\Coh \mc X'$ is an exact functor,
which implies the assertion. 

\endproof


\section{Toric DM stacks vs. toric DM orbifolds}\label{sec:4}
Let us introduce two kinds of \emph{root stacks}, 
important notions in this note.
In this section, we consider the toric DM stack $\mc X=\mc X_{\Sigma}$ 
associated to a stacky fan $\Sigma=(\beta,\Delta)$ in 
$$
N:=\Z^{n}\oplus\bigoplus_{i=1}^{r} \Z_{a_i}
$$
for some $a_i\in\Z_{>0}$.
As in \S \ref{sec:2-2} we take matrices 
$A=\begin{pmatrix}O_{n\times r}\\
\diag(a_1,\ldots,a_r)\end{pmatrix}\in M(n+r,r,\Z)$ 
and $B=(b_{i,j})\in M(n+r,s,\Z)$
such that $N\cong \coker A$ and $\beta$ is defined by 
$
\beta\colon \Z^s\stackrel{B}{\to}\Z^{n+r}\twoheadrightarrow \coker A.
$


\subsection{Root stacks of line bundles on toric DM stacks}
\label{rootl}
For $r'>r$, $\mbi e={}^t(e_1,\ldots,e_{r'-r})\in\Z^{r'-r}_{>0}$ and 
a collection $\mbi L={}^t(\mc L_1,\ldots,\mc L_{r'-r})$ of line bundles
on $\X$, we consider the ($\mbi e$-th) \emph{root stack} 
$\sqrt[\mbi e]{\mbi L/\mc X}$
of $(\mc X,\mbi L)$ (\cite[1.3.a]{FMN}),
that is, the fiber product:
$$
\xymatrix{
\sqrt[\mbi e]{\mbi L/\mc X}\ar[d]\ar[r]\ar@{}[dr]|\square
&(\mc B\C^{\ast})^{r'-r}\ar[d]^{\wedge \mbi e}\\
\mc X\ar[r]^{\mbi L}&(\mc B\C^{\ast})^{r'-r}.}
$$
Here $\mc B\C^{\ast}$ is the quotient stack of a point by 
the trivial $\C^{\ast}$-action.
By abuse of notation, we use symbols $\mbi L\colon\mc X\to 
(\mc B\C^{\ast})^{r'-r}$ and $\wedge\mbi e \colon 
(\mc B\C^{\ast})^{r'-r}\to (\mc B\C^{\ast})^{r'-r}$ to denote the  morphisms
induced by $\mbi L$ and 
$\wedge \mbi e\colon (\C^{\ast})^{r'-r}\to (\C^{\ast})^{r'-r}$ respectively.

The root stack $\sqrt[\mbi e]{\mbi L/\X}$ is constructed as 
a toric stack in the following way.
We take $(\mbi k_i,\mbi l_i)\in \Z^{s}\oplus\Z^{r}$ such that
$\mc L_i=\mo_\X(\mbi k_i\mbi D)_{\mbi l_i}$ in $\Pic \X$ for 
$i=1,\ldots,r'-r$
and matrices
$$
A'=
\begin{pmatrix}
A&0&\hdots&0\\
-{}^t\mbi l_1&e_1&&\\
\vdots&&\ddots&\\
-{}^t\mbi l_{r'-r}&&&e_{r'-r}
\end{pmatrix}
\in M(n+r',r',\mbi Z),\quad
B'=
\begin{pmatrix}
B\\
-{}^t\mbi k_1\\
\vdots\\
-{}^t\mbi k_{r'-r}
\end{pmatrix}\in
M(n+r',s,\Z).
$$
We define an abelian group $N'=\coker A'$
and a stacky fan $\Sigma'=(\beta',\Delta)$ in $N'$ 
by the map 
$\beta'\colon \Z^s\stackrel{B'}{\to}\Z^{n+r'}\twoheadrightarrow \coker (A')$.

We have a commutative diagram
$$
\xymatrix{ \Z^s\ar[r]^{B'}\ar@{=}[d]_{C=\id_{\Z^s}}
&\Z^{n+r'}\ar[d]^{D}
&\ar[l]_{A'}\Z^{r'}\ar[d]^{E}\\
\Z^s\ar[r]_{B}& \Z^{n+r}&\ar[l]^{A}\Z^r,
} 
$$
where $D$ and $E$ is the projections.
Then we have a morphism
$
\Psi\colon \mc X'=\mc X_{\Sigma'}\to \X
$
by the result in \S \ref{sec:2-3}, and 
$\Psi$ satisfies that 
$$
\Psi ^*\mc O_{\mc X}(\mbi k\mbi D)_{\mbi l}=
\mc O_{\mc X'}(\mbi k\mbi D')_{\mbi l}
$$
for any $(\mbi k,\mbi l)\in\Z^{s}\oplus\Z^{r}$ by (\ref{eqn:pullback}),
where we identify $\mbi l$ with the element in $\Z^{r'}$ 
by the injection $\Z^r\to \Z^r\oplus \Z^{r'-r}\cong \Z^{r'}$.

In particular, by the choice of $A'$ and $B'$, we have 
isomorphisms 
$$
\Psi ^*\mc O_{\mc X}(\mbi k_i\mbi D)_{\mbi l_i}\cong
(\mo_{\X',\mbi g_{r+i}'^{\ast}})^{\otimes e_i}
$$
for $i=1,\ldots,r'-r$. 
Hence $\Psi$ and 
$$
\mbi L'={}^t(\mc O_{\X',\mbi g_{r+1}'^{\ast}},\ldots,
\mc O_{\X',\mbi g_{r'}'^{\ast}})\colon \X'\to (\mc B\C^{\ast})^{r'-r}
$$
gives a morphism $\X'\to\sqrt[\mbi e]{\mbi L/\X}$.
By \cite[Theorem 2.6]{Pe},
we see that this is an isomorphism.

We have the following theorem.


\begin{thm}\label{thm:orb-vs-stack}
For $\X'=\sqrt[\mbi e]{\mbi L/\mc X}$, we have an equivalence
$$
\Coh \X'\cong \bigoplus _{\substack{(l_i)\in \Z^{r'-r}\\
0\le l_i<e_i}}
(\Psi^{\ast}\Coh \X)\otimes 
\mc O_{\mc X',\sum_i l_i\mbi g_{r+i}'^{\ast}},
$$
where in the right hand side the symbol $\bigoplus$ means that if $(l_i)\neq (m_i)\in \Z^{r'-r}$, then we have 
$$ 
\Hom(E,F)=0
$$
for any $E\in(\Psi^{\ast}\Coh \X)\otimes 
\mc O_{\mc X',\sum_i l_i\mbi g_{r+i}'^{\ast}}$ and $F\in (\Psi^{\ast}\Coh \X)\otimes 
\mc O_{\mc X',\sum_i m_i\mbi g_{r+i}'^{\ast}}$.
\end{thm}

\begin{proof}
Applying \cite [Lemma 4.1]{IU} repeatedly, 
we obtain the assertion.
\end{proof}


\subsection{The rigidification of toric DM stacks}\label{subsec:orb&stack}
We define the \emph{rigidification} $\X^{\text{rig}}$ of 
$\X$ according to \cite{FMN} as follows.
Let us take a stacky fan $\Sigma^{\text{rig}}=
(\beta^{\text{rig}},\Delta)$ in $\Bar{N}=\Z^n$, where 
we define $\beta^{\text{rig}}\colon \Z^s\to \bar{N}$ to be 
a composition of $\beta$ and a natural surjection $N\to \bar{N}$, 
and consider the toric DM orbifold 
$\mc X^{\text{rig}}:=\X_{\Sigma^{\text{rig}}}$.

We write $B=(b_{i,j})$ for $b_{i,j}\in\Z$ and 
put $\mc L_{i}=\mo_{\X^{\rig}}(-\sum_jb_{n+i,j}\mc D_{j})$
for $i=1,\ldots,r$. 
Then the stack $\X$ is isomorphic to 
the ${}^t(a_1,\ldots,a_r)$-th root stack of 
$(\X^{\rig},(\mc L_1,\ldots,\mc L_r))$.
In particular, we have a morphism
$
\Psi\colon \X \to \X^{\text{rig}}
$
as in \S \ref{rootl}.
We call this $\Psi$ the
\emph{rigidification morphism}. 
By applying Theorem \ref{thm:orb-vs-stack} to $\Psi$, we obtain the following corollaries:


\begin{cor}\label{cor:orb-vs-stack}
Let $\X$ be a toric DM stack and $\X^{\text{rig}}$ its rigidification.
Then we have an equivalence
$$
\Coh \X\cong \bigoplus _{\substack{(l_i)\in \Z^{r}\\
0\le l_i<a_i}}
(\Psi^{\ast}\Coh \X^{\rig})\otimes 
\mc O_{\mc X,\sum_i l_i\mbi g_{i}^{\ast}}.
$$
%
\end{cor}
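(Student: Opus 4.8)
The plan is to deduce this corollary directly from Theorem \ref{thm:orb-vs-stack} by applying it to the rigidification morphism $\Psi\colon \X\to\X^{\rig}$. First I would recall what was established in \S\ref{subsec:orb&stack}: the stack $\X$ is isomorphic to the ${}^t(a_1,\ldots,a_r)$-th root stack of the pair $(\X^{\rig},(\mc L_1,\ldots,\mc L_r))$, where $\mc L_i=\mo_{\X^{\rig}}(-\sum_jb_{n+i,j}\mc D_j)$. Concretely, this means we are in the situation of \S\ref{rootl} with the base toric DM stack taken to be the orbifold $\X^{\rig}$ (so that its ``$r$'' equals $0$ and its ``$N$'' equals $\Bar N=\Z^n$), with $r'=r$, with $\mbi e={}^t(a_1,\ldots,a_r)$, and with $\mbi L={}^t(\mc L_1,\ldots,\mc L_r)$. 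Under this substitution the root stack $\sqrt[\mbi e]{\mbi L/\X^{\rig}}$ is exactly $\X$, and the morphism $\Psi$ of \S\ref{rootl} becomes the rigidification morphism.

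Next I would simply invoke Theorem \ref{thm:orb-vs-stack} for this data. The theorem gives an equivalence
$$
\Coh\X\cong\bigoplus_{\substack{(l_i)\in\Z^{r}\\ 0\le l_i<a_i}}(\Psi^\ast\Coh\X^{\rig})\otimes\mc O_{\X,\sum_i l_i\mbi g_i'^\ast},
$$
and it only remains to match notation: in the setup of \S\ref{rootl} the extra generators of the projective resolution of $N'$ are indexed $\mbi g'_{r+1},\ldots,\mbi g'_{r'}$ because the base already had $r$ of them, but here the base $\X^{\rig}$ contributes none, so those generators are $\mbi g'_1,\ldots,\mbi g'_r$, which under the identification $N'\cong N$ (both are $\coker A'=\coker A$) are precisely the $\mbi g_1,\ldots,\mbi g_r$ appearing in the projective resolution $0\to\Z^r\xrightarrow{A}\Z^{n+r}\to N\to 0$ fixed at the start of \S\ref{sec:4}. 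Substituting $\mbi g_i^\ast$ for $\mbi g_i'^\ast$ and $a_i$ for $e_i$ then yields the stated formula verbatim.

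I do not expect a genuine obstacle here, since the corollary is essentially a specialization of Theorem \ref{thm:orb-vs-stack}. The one point requiring a little care — and the only thing I would write out in detail — is the bookkeeping identifying the root-stack construction of \S\ref{rootl} applied to $(\X^{\rig},\mbi L)$ with the rigidification: one must check that the matrices $A'$, $B'$ built in \S\ref{rootl} from $(\mbi k_i,\mbi l_i)$ with $\mc L_i=\mo_{\X^{\rig}}(\mbi k_i\mbi D)_{\mbi l_i}$ agree (up to the evident reindexing of the dual basis vectors) with the matrices $A$, $B$ defining the stacky fan $\Sigma$ of $\X$ itself, so that the $\Psi$ of \S\ref{rootl} literally is the rigidification morphism and the twisting line bundles $\mc O_{\X',\mbi g'^\ast_{r+i}}$ become $\mc O_{\X,\mbi g^\ast_i}$. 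Once this identification is in place the equivalence is immediate from Theorem \ref{thm:orb-vs-stack}.
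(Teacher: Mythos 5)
Your proposal is correct and matches the paper's argument: the paper likewise observes in \S4.2 that $\X$ is the ${}^t(a_1,\ldots,a_r)$-th root stack of $(\X^{\rig},(\mc L_1,\ldots,\mc L_r))$ and obtains the corollary by applying Theorem \ref{thm:orb-vs-stack} to the rigidification morphism $\Psi$. The index bookkeeping you flag is the only content beyond that specialization, and you handle it correctly.
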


The following must be well-known to specialists. 
It is used in the proof of Lemma \ref{lem}.


\begin{cor}\label{cor:finite-length}
Let $\X$ be a toric DM stack. 
Then $\Coh \X$ has a finite homological dimension, 
namely any coherent sheaf on $\X$ has a locally 
free resolution of finite length.
\end{cor}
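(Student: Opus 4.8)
\textbf{Proof plan for Corollary \ref{cor:finite-length}.}
The plan is to reduce everything to the well-known fact that a smooth variety has finite homological dimension, using the two structural results just established. First I would observe that the statement is local-to-global in the following sense: by Corollary \ref{cor:orb-vs-stack}, $\Coh \X$ decomposes as a finite direct sum of ``twisted copies'' of $\Psi^{\ast}\Coh \X^{\rig}$, so it suffices to bound the homological dimension of the toric DM orbifold $\X^{\rig}$; the twist by $\mc O_{\mc X,\sum_i l_i\mbi g_i^{\ast}}$ is an auto-equivalence and changes nothing. Thus I may assume from the start that $\X$ is a toric DM orbifold, i.e. $N$ is torsion free.

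Next, for a toric DM orbifold I would use the description $\Coh \X\cong \gr S/\tor S$ from (\ref{grS}), together with the fact that $S=\C[z_1,\ldots,z_s]$ is a polynomial ring, hence regular of finite global dimension $s$. Any coherent sheaf $\mc F$ on $\X$ corresponds to a finitely generated $G^\vee$-graded $S$-module $M$, which admits a finite graded free resolution over $S$ of length at most $s$ by the graded Hilbert syzygy theorem; sheafifying (i.e. applying the exact quotient functor $\gr S\to \gr S/\tor S$) turns the free modules $S(\mbi w)$ into the line bundles $\mc L_{\chi_{\bar{\mbi w}}}$, giving a locally free resolution of $\mc F$ of length $\le s$ on $\X$. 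Alternatively, and perhaps more cleanly, one can argue directly on the smooth Deligne--Mumford stack $\X=[U/G]$: the quotient map $U\to \X$ is faithfully flat, $U$ is an open subset of $\C^s$ hence a smooth scheme of dimension $n$, and finiteness of homological dimension can be checked on the cover, where it follows from smoothness of $U$ (every coherent sheaf on a regular scheme of dimension $n$ has a locally free resolution of length $\le n$), then descended $G$-equivariantly via (\ref{eq}). Either route yields the bound, with the length controlled by $s$ (the Cox ring route) or by $n=\dim\X$ (the atlas route).

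The main obstacle, such as it is, is bookkeeping rather than conceptual: in the Cox ring approach one must make sure that the syzygy resolution can be taken in the \emph{graded} category and that passing to $\gr S/\tor S$ preserves exactness on the left (it does, since quotienting by a Serre subcategory is exact) and sends graded frees to locally frees; in the atlas approach one must check that a bounded locally free $G$-equivariant resolution on $U$ exists, which is where one uses that $G$ is an affine algebraic group acting on an affine-type scheme so that equivariant coherent sheaves have enough locally free objects and finite-length resolutions can be built equivariantly. Since $\X$ is Deligne--Mumford and $U$ is quasi-affine, neither of these presents a real difficulty. I would therefore write the proof via the Cox ring description, as it keeps everything inside the already-introduced formalism of (\ref{grS}) and makes the bound $\le s$ completely explicit.
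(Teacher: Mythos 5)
Your proposal is correct and follows essentially the same route as the paper: the paper's proof consists exactly of the reduction to the rigidification $\X^{\rig}$ via Corollary \ref{cor:orb-vs-stack}, after which it cites the proof of [BH1, Theorem 4.6] for the orbifold case --- and that cited argument is precisely the Cox-ring/graded-syzygy argument you spell out, so you have merely opened the black box. The only slip is cosmetic: in your atlas variant $U\subset\C^s$ is smooth of dimension $s$, not $n$, so that route also yields the bound $s$ rather than $n=\dim\X$.
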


\begin{proof}
$\Coh \X^{\rig}$ has a finite homological dimension 
by the proof of \cite[Theorem 4.6]{BH1}. 
Hence the assertion follows from 
Corollary \ref{cor:orb-vs-stack}.
\end{proof}

The following are stacky generalizations of the  results for toric DM orbifolds in \cite{Ka} 
and \cite[Theorem~7.3]{BHu}.
 

\begin{cor}\label{cor:kawamata}
\begin{enumerate}
\item
Let $\X$ be a toric DM stack. 
Then $D^b(\X)$ has a full exceptional collection 
consisting of coherent sheaves.
\item
Let $\X$ be a two dimensional toric DM stack 
whose rigidification $\mc X^{\text{rig}}$ has
an ample anti-canonical divisor.  
Then $D^b(\X)$ has a full strong exceptional collection 
consisting of line bundles.
\end{enumerate}
\end{cor}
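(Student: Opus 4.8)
The plan is to derive Corollary~\ref{cor:kawamata} from Corollary~\ref{cor:orb-vs-stack} together with the cited orbifold results. Both parts share the same mechanism: a full (strong) exceptional collection on the rigidification $\mc X^{\rig}$ gets pulled back along $\Psi$ and then ``spread out'' over the finitely many twists $\mc O_{\mc X,\sum_i l_i\mbi g_i^\ast}$ with $0\le l_i<a_i$ that appear in the semiorthogonal-type decomposition of Corollary~\ref{cor:orb-vs-stack}.

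For part~(i), I would first invoke \cite{Ka} (Kawamata's theorem on derived categories of toric DM \emph{orbifolds}, i.e.\ stacks with trivial generic stabilizer) to get a full exceptional collection $E_1,\ldots,E_m$ of coherent sheaves on $\mc X^{\rig}$. Set $E_j^{(\mbi l)}:=\Psi^\ast E_j\otimes \mc O_{\mc X,\sum_i l_i\mbi g_i^\ast}$ for each $\mbi l=(l_i)$ with $0\le l_i<a_i$. By Corollary~\ref{cor:orb-vs-stack} these objects generate $D^b(\mc X)$, and each $E_j^{(\mbi l)}$ is still exceptional (tensoring by a line bundle and pulling back along the faithfully flat $\Psi$ preserves $\Ext^\bullet(E_j,E_j)=\C$, using that $\Psi_\ast\Psi^\ast$ on each graded piece is computed by the projection formula and Corollary~\ref{cor:orb-vs-stack} shows the relevant $\Ext$-groups decompose). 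The one real point is to exhibit an \emph{ordering} of the enlarged collection $\{E_j^{(\mbi l)}\}$ making it exceptional: the decomposition in Corollary~\ref{cor:orb-vs-stack} is a direct-sum (orthogonal, after the grading shift) decomposition, so the $\Ext$'s between pieces with different $\mbi l$ vanish in \emph{both} directions, and within a fixed $\mbi l$ one simply keeps the order inherited from $E_1,\ldots,E_m$. Concretely one orders lexicographically by $\mbi l$ first, then by $j$; semiorthogonality across blocks is automatic and within a block it is the given one. Finite homological dimension (Corollary~\ref{cor:finite-length}) guarantees all $\Ext$-groups are finite-dimensional and vanish in high degree, so these are honest exceptional objects in $D^b$.

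For part~(ii), under the hypothesis that $\mc X^{\rig}$ has ample anti-canonical divisor and $\dim\mc X=2$, I would apply \cite[Theorem~7.3]{BHu} to $\mc X^{\rig}$ to get a full \emph{strong} exceptional collection of \emph{line bundles} $\mc L_1,\ldots,\mc L_m$ on $\mc X^{\rig}$. Then $\Psi^\ast\mc L_j\otimes\mc O_{\mc X,\sum_i l_i\mbi g_i^\ast}$ are again line bundles on $\mc X$ (pullback of a line bundle is a line bundle, tensor of line bundles is a line bundle), they generate $D^b(\mc X)$ by Corollary~\ref{cor:orb-vs-stack}, and the strongness — $\Ext^{>0}$ vanishing between all pairs — follows because: across different $\mbi l$ all $\Ext$'s (in every degree) vanish by the orthogonality built into Corollary~\ref{cor:orb-vs-stack}; within a fixed $\mbi l$ the group $\Ext^k_{\mc X}(\Psi^\ast\mc L_a,\Psi^\ast\mc L_b)$ is identified, via the projection formula together with Corollary~\ref{cor:orb-vs-stack} (which tells us exactly which summand of $\Psi_\ast\mc O_{\mc X}$ the relevant grading piece lands in), with $\Ext^k_{\mc X^{\rig}}(\mc L_a,\mc L_b)$, which vanishes for $k>0$ by strongness downstairs. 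Ordering the line bundles lexicographically in $(\mbi l,j)$ as above gives a full strong exceptional collection of line bundles on $\mc X$.

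The main obstacle is the bookkeeping in the previous paragraph: one must verify carefully that the decomposition of $\Coh\mc X$ in Corollary~\ref{cor:orb-vs-stack} is not merely an additive equivalence but is compatible with $\Ext$-computations, i.e.\ that $\RHom_{\mc X}\bigl(\Psi^\ast\mc F\otimes\mc O_{\mc X,\mbi l},\Psi^\ast\mc G\otimes\mc O_{\mc X,\mbi l'}\bigr)$ vanishes for $\mbi l\ne\mbi l'$ and equals $\RHom_{\mc X^{\rig}}(\mc F,\mc G)$ for $\mbi l=\mbi l'$. This is where one uses that $\Psi$ is the rigidification (root-stack) morphism: $\Psi_\ast\mc O_{\mc X}\cong\bigoplus_{0\le l_i<a_i}\mc O_{\mc X^{\rig},-\sum_i l_i b_{n+i,\bullet}\mc D_\bullet}$-type formula, but in fact the cleanest route is to observe that Corollary~\ref{cor:orb-vs-stack} is proved by iterating \cite[Lemma~4.1]{IU}, and that lemma is precisely a semiorthogonal decomposition statement, so the $\Ext$-orthogonality is already contained in it; one just has to unwind that $\Psi^\ast$ is the relevant fully faithful functor and that the twisting line bundles $\mc O_{\mc X,\sum l_i\mbi g_i^\ast}$ are the ones appearing in the decomposition. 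Everything else — ampleness of $-K$ being inherited, finiteness of the collection, genericity of the stabilizer being killed by rigidification — is either immediate or quoted from \S\ref{sec:4}.
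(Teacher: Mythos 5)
Your proposal is correct and follows essentially the same route as the paper: the paper's proof is a one-line deduction from Corollary \ref{cor:orb-vs-stack} together with \cite{Ka} for (i) and \cite[Theorem~7.3]{BHu} for (ii), and your argument simply spells out the bookkeeping (block-orthogonality from the direct-sum decomposition of $\Coh\X$, lexicographic ordering, preservation of $\Ext$ within each block) that the paper leaves implicit.
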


\begin{proof}
The assertion (i) (respectively (ii)) directly follows from 
Corollary \ref{cor:orb-vs-stack} and  the result in \cite{Ka} (resp. \cite[Theorem~7.3]{BHu}).
\end{proof}


\subsection{Root stacks of effective Cartier divisors on toric DM stacks}\label{sec:2-5}
For positive integers $c_1,\ldots,c_s$, we define another 
stacky fan $\Sigma''=(\beta'',\Delta)$ in $N$ 
by replacing $\beta$ in $\Sigma$ with 
$$
\beta''\colon \Z^s\stackrel{BC}{\to}\Z^{n+r}\twoheadrightarrow \coker A,
$$
where we put $C=\diag (c_1,\ldots,c_s)\in M(s,s,\Z)$.
We call $\mc X''=\mc X_{\Sigma''}$ the ($\mbi c$-th) 
\emph{root stack} of $(\mc X,\mbi D)$ 
(\cite[1.3.b]{FMN}) and denote it by 
$$
\sqrt[\mbi c]{\mbi D/\mc X},
$$ 
where we define $\mbi c={}^t(c_1,\ldots,c_s)$ 
and $\mbi D={}^t(\mc D_1,\ldots,\mc D_s)$ is the collection
of toric divisors on $\X$ as in \S \ref{sec:2-2}.
We have a commutative diagram
$$
\xymatrix{ \Z^s\ar[r]^{BC}\ar[d]_{C}
&\Z^{n+r}\ar@{=}[d]^{D= \id_{\Z^{n+r}}}&\ar[l]_{A}\Z^r\ar@{=}[d]^{E= \id_{\Z^r}}\\
\Z^s\ar[r]_{B}& \Z^{n+r}&\ar[l]^{A}\Z^r.
} 
$$
Then we have a morphism 
$
\varphi\colon \mc X''=\sqrt[\mbi c]{\mbi D/\mc X}\to \X
$
by the result in \S \ref{sec:2-3}.
We call $\varphi$ the 
\emph{root construction morphism}.
We obtain the following by Lemma~\ref{van} (i), (ii).


\begin{lem}\label{lem:rootc-vanish}
$\mathbb R\varphi_{\ast}\mo_{\mc X''}=\mo_{\mc X}$.
\end{lem}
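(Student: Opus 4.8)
The plan is to verify the two hypotheses of Lemma~\ref{van} for the root construction morphism $\varphi\colon \mc X''=\sqrt[\mbi c]{\mbi D/\mc X}\to \mc X$, and then combine the two conclusions. Recall that $\varphi$ is determined by the matrices $C=\diag(c_1,\ldots,c_s)\in M(s,s,\Z)$, $D=\id_{\Z^{n+r}}$, $E=\id_{\Z^r}$, as displayed just before the statement. First I would apply Lemma~\ref{van}(ii): here $s=s'$, $C=(c_i\delta_{i,j})$ with each $c_i\in\Z_{>0}$, and the induced map of fans $\bar\gamma_2\colon(\bar N,\Delta)\to(\bar N,\Delta)$ is literally the identity (since $D=\id$ and both stacky fans use the same $\Delta$ and the same $N$), hence an isomorphism. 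Therefore $\mathbb R^i\varphi_{\ast}\mc F=0$ for every coherent sheaf $\mc F$ on $\mc X''$ and every $i>0$; in particular $\mathbb R^i\varphi_{\ast}\mo_{\mc X''}=0$ for $i>0$, so $\mathbb R\varphi_{\ast}\mo_{\mc X''}=\varphi_{\ast}\mo_{\mc X''}$ concentrated in degree $0$.

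Next I would compute $\varphi_{\ast}\mo_{\mc X''}$ using Lemma~\ref{van}(i). For the first hypothesis there, for each row index $i$ of $C$ the $i$-th column of $C$ has its only nonzero entry $c_{i,i}=c_i>0$ on the diagonal, so the condition ``for each $i$ there exists $j$ with $c_{i,j}>0$ and all other entries of the $j$-th column of $C$ zero'' holds (take $j=i$). For the remaining hypotheses I must check that in the diagram~(\ref{eqn:dual}) the map $\gamma_2^{\ast}$ is surjective and $\gamma_2^{\ast}[1]$ is injective. Since $\gamma_2$ is given by $D=\id_{\Z^{n+r}}$ and $E=\id_{\Z^r}$ on the chosen projective resolutions of $N$ (which are the \emph{same} resolution for both $N=\coker A$ copies), $\gamma_2$ is the identity map $N\to N$ in the derived category; hence $\gamma_2^{\ast}\colon\Hom_{\Z}(N,\Z)\to\Hom_{\Z}(N,\Z)$ and $\gamma_2^{\ast}[1]\colon\Ext^1_{\Z}(N,\Z)\to\Ext^1_{\Z}(N,\Z)$ are both identity maps, a fortiori surjective and injective respectively. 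Lemma~\ref{van}(i) then gives $\varphi_{\ast}\mo_{\mc X''}=\mo_{\mc X}$.

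Combining the two: $\mathbb R\varphi_{\ast}\mo_{\mc X''}=\varphi_{\ast}\mo_{\mc X''}=\mo_{\mc X}$, which is the assertion. I expect no genuine obstacle here — the lemma is an immediate corollary of Lemma~\ref{van}, and the only thing to be careful about is bookkeeping: making sure the ``primed'' data in Lemma~\ref{van} is matched correctly with $\mc X''$ (so $\mc X''$ plays the role of the source $\mc X$ in Lemma~\ref{van} and $\mc X$ plays the role of $\mc X'$), and confirming that $D$ and $E$ being the identity really does make $\gamma_2$ the identity in the derived category so that the (co)homology maps in~(\ref{eqn:dual}) are trivially iso. If one wanted to be maximally explicit one could instead argue~(i) directly via the homogeneous coordinate rings: $\varphi^{\sharp}\colon S\to S$, $z_i\mapsto z_i^{c_i}$, identifies $S$ with $S z^{\mathbf 0}$ compatibly with the $G^{\vee}$-grading, giving ${}_{S}S\cong S$ and hence $\varphi_{\ast}\mo_{\mc X''}=\mo_{\mc X}$ by~(\ref{grS}); but invoking Lemma~\ref{van} as the paper suggests is the cleaner route.
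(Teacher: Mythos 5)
Your proposal is correct and is exactly the paper's argument: the paper derives this lemma directly from Lemma~\ref{van}(i) and (ii) applied to the root construction morphism, just as you do. Your verification of the hypotheses (diagonal $C$ with positive entries, $\gamma_2=\id_N$ making $\gamma_2^{\ast}$ surjective and $\gamma_2^{\ast}[1]$ injective, and the fan map an isomorphism) is the correct bookkeeping that the paper leaves implicit.
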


Next assume furthermore that $\mc X=\mc X_{\Sigma}$ is an orbifold.
Then in the notation in \S \ref{sec:2-1}, $\X$ is denoted by 
$
\X\bigl(X,\sum_{i=1}^s b_iD_i\bigr)
$
for some $b_i\in \Z_{>0}$ and 
the coarse moduli space $X=X_{\Delta}$ of $\X$.  
Then $\mc X$ is realized as a root stack 
$\sqrt[\mbi b]{\mbi D/ \X^{\text{can}}}$ over the canonical stack
$\mc X^{\text{can}}=\X\bigl(X,\sum D_i\bigr)$
for $\mbi b={}^t(b_1,\ldots,b_s)$.
Additionally if $X$ is smooth, then $\mc X$ is 
a root stack $\sqrt[\mbi b]{\mbi D/ X }$ over $X$ 
by Remark \ref{rem:cox}. 



\section{Frobenius push-forward for toric DM stacks}\label{sec:3}

\subsection{Generators}
Let us begin this section with important definitions.
Suppose that  $\X$ is a smooth complete DM stack.


\begin{defn}\label{def:generator}
For a subset  $\mc S\subset D^b(\X)$,  $\langle \mc S \rangle$ denotes the smallest full triangulated subcategory containing $\mc S$ 
of $D^b(\X)$ such that $\langle \mc S \rangle$ is stable under taking direct summands and direct sums. 

We say that  $\mc S$ is a \emph{generator} of $D^b(\X)$, or $\mc S$ \emph{generates} $D^b(\X)$
 if $\langle \mc S \rangle=D^b(X)$.
If $\mc S$ consists of a single element $\alpha$, we just say that $\alpha$ is a generator of $D^b(\X)$.
\end{defn}


\begin{defn} 
\begin{enumerate}
\item
An object $\mc E\in D^b(\X)$ is called \emph{exceptional} if it satisfies 
$$
\Hom _{D^b(\X)}^i(\mc E,\mc E)=
\begin{cases}
\C &\mbox{$i=0$}\\
0 &\mbox{otherwise}.
\end{cases}
$$ 
\item
An ordered set $(\mc  E_1,\ldots,\mc E_n)$ 
of exceptional objects is called an \emph{exceptional collection} 
if the following condition holds;
$$\Hom ^i_{D^b(\X)}(\mc E_k,\mc E_j)=0$$ 
for all $k>j$ and all $i$. When we say that a finite set $\mc S$
of objects is an exceptional collection, it means that $\mc S$ 
forms an exceptional collection in an appropriate order. 
\item 
An exceptional collection $(\mc  E_1,\ldots,\mc E_n)$ is called 
\emph{strong} if 
$$\Hom ^i_{D^b(\X)}(\mc E_k,\mc E_j)=0$$ 
for all $k,j$ and $i\ne 0$.
\item
An exceptional collection $(\mc E_1,\ldots,\mc E_n)$ is called 
\emph{full} if 
the set $\{ \mc  E_1,\ldots,\mc E_n\}$ generates $D^b(\X).$ 
\end{enumerate}
\end{defn}

\begin{rem}\label{rem:Grothendieck}
Let  $\X=\X_{\Sigma}$ be a toric DM orbifold associated with a stacky fan $\Sigma=(\Delta, \beta)$.  
If the toric DM stack $\X=\X_{\Sigma}$ has a full exceptional collection consisting of
$n$ exceptional objects, the rank of its Grothendieck group $K(\X)$ is $n$.
Furthermore suppose that $-K_{\X}$ is nef, which is equivalent to the condition that 
 all $\beta(\mbi f_i)$'s lie on the boundary of  the convex hull of all $\beta(\mbi f_i)$'s (see also \S \ref{sec:6} for the notion \emph{nef}).
Then it is proved in \cite[Corollary 2.2 and Proposition 3.1 (iii)]{BH2} that
$$
\rk K(\X)=\rk N! \vol \Delta,
$$
where 
$\vol \Delta$ is the volume of the convex hull of all $\beta(\mbi f_i)$'s. 
\end{rem}


\subsection{Frobenius morphism}\label{subsec:Frobenius}

Below we use the terminology in \S\ref{sec:2-1} and \ref{sec:2-2}.
For a positive integer $m$, we consider the Frobenius morphism 
$F(=F_m)\colon \X\to\X$ induced by 
$$
\wedge m\colon U\to U \text{ and } \wedge m\colon G\to G.
$$ 
Take both of  $\gamma_1\colon \Z^s\to \Z^s$ 
and $\gamma_2\colon N\to N$ in (\ref{mor}) 
as the multiplication maps by $m$.
Then we obtain the \emph{Frobenius morphism} $F$, 
which is actually a generalization of the 
usual  Frobenius morphism.


\begin{lem}\label{lem}
Let $\X$ and $\mc Y$ be toric DM stacks. 
Consider a proper morphism $\varphi\colon \X\to\mc Y$ 
which satisfies $\mathbb R\varphi_{\ast}\mo_{\X}
=\mo_{\mc Y}$.
Then $D^b(\X)=\langle F_{\ast}\mo_{\X}\rangle$ 
implies $D^b(\mc Y)=\langle F_{\ast}\mo_{\mc Y}\rangle$.
\end{lem}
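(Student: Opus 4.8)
The plan is to exploit the compatibility between the Frobenius morphism and the map $\varphi$. The key observation is that, because $\varphi$ is a torus-equivariant morphism of toric DM stacks, it commutes with the Frobenius morphisms on source and target in the appropriate sense: if $F_{\X}$ and $F_{\mc Y}$ denote the Frobenius morphisms of the same degree $m$ on $\X$ and $\mc Y$, then $\varphi\circ F_{\X}=F_{\mc Y}\circ\varphi$, since on the level of the data $(\gamma_1,\gamma_2)$ in (\ref{mor}) the Frobenius is simply multiplication by $m$ on each of $\Z^s$, $N$, $\Z^{s'}$, $N'$, and this clearly commutes with any $C,D,E$. Hence there is a base-change isomorphism, and combined with the projection formula and the hypothesis $\mathbb R\varphi_{\ast}\mo_{\X}=\mo_{\mc Y}$ we get
$$
\mathbb R\varphi_{\ast}(F_{\X\ast}\mo_{\X})\cong F_{\mc Y\ast}(\mathbb R\varphi_{\ast}\mo_{\X})\cong F_{\mc Y\ast}\mo_{\mc Y}.
$$

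The second ingredient is the standard fact that, under $\mathbb R\varphi_{\ast}\mo_{\X}=\mo_{\mc Y}$, the functor $\mathbb L\varphi^{\ast}\colon D^b(\mc Y)\to D^b(\X)$ is fully faithful and $\mathbb R\varphi_{\ast}$ is a left inverse of it (this uses the projection formula and is where properness of $\varphi$ enters; one also uses Corollary \ref{cor:finite-length} so that $\mathbb L\varphi^{\ast}$ and $\mathbb R\varphi_{\ast}$ preserve boundedness). Now suppose $D^b(\X)=\langle F_{\X\ast}\mo_{\X}\rangle$. Take any object $\mc G\in D^b(\mc Y)$. Then $\mathbb L\varphi^{\ast}\mc G\in D^b(\X)=\langle F_{\X\ast}\mo_{\X}\rangle$, so $\mathbb L\varphi^{\ast}\mc G$ is obtained from $F_{\X\ast}\mo_{\X}$ by finitely many shifts, cones, direct sums and direct summands. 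Applying the exact functor $\mathbb R\varphi_{\ast}$, we conclude that $\mathbb R\varphi_{\ast}\mathbb L\varphi^{\ast}\mc G$ lies in $\langle\mathbb R\varphi_{\ast}(F_{\X\ast}\mo_{\X})\rangle=\langle F_{\mc Y\ast}\mo_{\mc Y}\rangle$. But $\mathbb R\varphi_{\ast}\mathbb L\varphi^{\ast}\mc G\cong\mc G$ by full faithfulness. Hence $\mc G\in\langle F_{\mc Y\ast}\mo_{\mc Y}\rangle$, and since $\mc G$ was arbitrary, $D^b(\mc Y)=\langle F_{\mc Y\ast}\mo_{\mc Y}\rangle$.

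The step I expect to be the main obstacle is establishing the base-change isomorphism $\mathbb R\varphi_{\ast}(F_{\X\ast}\mo_{\X})\cong F_{\mc Y\ast}\mo_{\mc Y}$ cleanly in the stacky setting — that is, checking that $\varphi$ and the Frobenius genuinely commute as morphisms of stacks (not just on coarse spaces or up to $2$-isomorphism in a way that matters), and that flat base change / the projection formula are available for proper morphisms of smooth DM stacks. In the toric setting this can be done very explicitly: using the homogeneous-coordinate description, $F_{\X\ast}\mo_{\X}$ corresponds to $S$ regarded as a module over $\C[z_1^m,\ldots,z_s^m]$ with its induced grading, and the compatibility of $\varphi^{\sharp}$ with the $\wedge m$ maps makes the identity $\mathbb R\varphi_{\ast}(F_{\X\ast}\mo_{\X})=F_{\mc Y\ast}\mo_{\mc Y}$ reduce to the hypothesis $\mathbb R\varphi_{\ast}\mo_{\X}=\mo_{\mc Y}$ via the graded-module formula (\ref{pfwd}) together with Lemma \ref{van}-style bookkeeping. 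Alternatively, one notes that $F\colon\X\to\X$ is affine and flat, so $F_{\ast}$ is exact and the whole statement can be phrased at the level of (complexes of) quasi-coherent sheaves where base change along the flat $F$ is automatic.
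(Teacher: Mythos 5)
Your argument is correct and is essentially the paper's own proof: the paper likewise uses finiteness of homological dimension (Corollary \ref{cor:finite-length}), the identity $\mathbb R\varphi_{\ast}F_{\ast}\mo_{\X}=F_{\ast}\mathbb R\varphi_{\ast}\mo_{\X}=F_{\ast}\mo_{\mc Y}$ coming from the commutation of $\varphi$ with the Frobenius, and the projection-formula isomorphism $\mc F\cong\mathbb R\varphi_{\ast}\mathbb L\varphi^{\ast}\mc F$ to push the generation statement down to $\mc Y$. The only cosmetic difference is that you spell out the commutation $\varphi\circ F_{\X}=F_{\mc Y}\circ\varphi$ (which the paper takes for granted), and note that what is needed there is just functoriality of pushforward along the commuting square together with exactness of $F_{\ast}$, not a genuine base-change theorem.
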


\proof
We see that $\Coh \mc Y$ has a finite homological dimension by Corollary \ref{cor:finite-length}.  
For any object $\mathcal{F}\in D^b(\mc Y)$, by the assumption we have
 $\mathbb L\varphi^{\ast}\mathcal{F}\in\langle F_{\ast}\mo_{\X}\rangle$.
Since we have $\mathbb R\varphi_\ast F_{\ast}\mathcal{O}_{\X}=F_{\ast}\mathbb R\varphi_\ast \mathcal{O}_{\X}=F_{\ast}\mo_{\mc Y}$, we see that $\mathcal{F}\cong\mathbb R\varphi_\ast \mathbb L\varphi^{\ast}\mathcal{F}$ belongs to $\langle F_{\ast}\mathcal{O}_{\mc Y}\rangle$.
\endproof


\subsection{Direct summands of Frobenius push-forward}
We consider the Frobenius morphism 
$$
F\colon \mc X\to \mc X'.
$$ 
Although the target space $\mc X'$ is $\mc X$ itself,  
we use the notation $\mc X'$ so that we distinguish the domain $\mc X$ and the target $\mc X'$. 
Theorem \ref{frobenius} for toric DM stacks generalizes the result for toric varieties in \cite{Ac}. 
A similar proof in \cite{Ac} works for the stacky case, but
we give another proof using (\ref{pfwd}).


\begin{thm}\label{frobenius}
For $\chi \in G^{\vee}$ and $\chi'\in {G'}^{\vee}$, 
 put
$$
m(\chi,\chi'):=\sharp\lbrace\mbi j\in [0,m-1]^s\mid m\chi'=\chi-\cl(\mbi j)\rbrace.
$$ 
Then we have 
$$
F_{\ast}\mc L_{\chi}=\bigoplus_{\chi'\in {G'}^{\vee}}\mc L_{\chi'}^{\oplus m(\chi,\chi')}.
$$
\end{thm}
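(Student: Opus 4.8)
The plan is to compute the pushforward $F_*\mc L_\chi$ directly through the homogeneous coordinate description, using the formula (\ref{pfwd}) for pushforwards of morphisms between toric DM stacks. First I would unwind what the Frobenius morphism $F\colon\mc X\to\mc X'$ is in the language of \S\ref{sec:2-3}: since $\gamma_1$ and $\gamma_2$ are both multiplication by $m$, the associated map $\varphi^{\sharp}\colon S'\to S$ on homogeneous coordinate rings sends $z_i'$ to $z_i^m$, so $\varphi^\sharp(z'^{\mbi k'})=z^{m\mbi k'}$; and the dual map $\rho^\vee\colon {G'}^\vee\to G^\vee$ on the Picard side is multiplication by $m$ on $\DG(\beta)=\DG(\beta')$ (this is visible from the commutativity of (\ref{eqn:dual}), with $\gamma_1^\ast={}^tC=mI$). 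The line bundle $\mc L_\chi$ corresponds via (\ref{grS}) to a graded $S$-module, and I want to decompose its pushforward.

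The key computation is then the following. By (\ref{pfwd}), the $S'$-module ${}_{S'}L$ representing $F_*\mc L_\chi$ has graded piece $({}_{S'}L)_{\chi'}=L_{\rho^\vee(\chi')}=L_{m\chi'}$, where $L$ is the module for $\mc L_\chi$. Concretely, I would take $L=Sz^{-\mbi k}\cong S(\mbi k)$ for any $\mbi k\in\Z^s$ with $\cl(\mbi k)=\chi$ (or more invariantly work with $H^0(U,\mc L_\chi)_\bullet$), so that $L_{m\chi'}$ is spanned by monomials $z^{\mbi a}$ with $\mbi a\in\Z_{\ge 0}^s$ (up to the shift) and $\cl(\mbi a)=m\chi'$ appropriately. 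The crucial point is that the $S'$-action is through $\varphi^\sharp$, i.e. $z_i'$ acts as $z_i^m$, so as an $S'$-module ${}_{S'}L$ decomposes according to the residues of exponents modulo $m$: writing each relevant exponent vector as $m\mbi b+\mbi j$ with $\mbi j\in[0,m-1]^s$, the monomial $z^{m\mbi b+\mbi j}$ generates a free rank-one $S'$-submodule over the $z^{m\mbi b}$-part, and the residue $\mbi j$ is fixed along each such submodule. Collecting monomials with a fixed residue $\mbi j$ gives a sub-$S'$-module isomorphic (as a graded $S'$-module, after tracking the grading shift $\cl(\mbi j)$) to $S'$ itself twisted by the character $\chi'_{\mbi j}$ determined by the relation $m\chi'=\chi-\cl(\mbi j)$. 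Hence ${}_{S'}L\cong\bigoplus_{\mbi j\in[0,m-1]^s}S'(\text{shift by }\mbi j)$, and grouping the $\mbi j$'s that yield the same character $\chi'$ gives exactly the multiplicity $m(\chi,\chi')$.

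To finish I would convert this $S'$-module isomorphism into the claimed sheaf isomorphism via the equivalence (\ref{grS}), being careful that the decomposition is as graded $S'$-modules (not merely ungraded), so that each summand $S'$ with its shift corresponds precisely to the line bundle $\mc L_{\chi'}$ via (\ref{eqn:pic}); summing over $\mbi j$ with $m\chi'=\chi-\cl(\mbi j)$ produces $\mc L_{\chi'}^{\oplus m(\chi,\chi')}$ and summing over all $\chi'\in{G'}^\vee$ recovers $F_*\mc L_\chi$. I expect the main obstacle to be the bookkeeping: verifying that the residue-class decomposition of the monomial basis is actually a decomposition of \emph{graded} $S'$-modules and that the grading shifts match the characters $\chi'$ exactly as in (\ref{eqn:pic}), rather than off by a sign or a twist. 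One must also make sure the argument is insensitive to the choice of representing module $\mbi k$ (equivalently, that passing to the quotient by $\tor S'$ is harmless), which it is because the decomposition respects the $(z'_\sigma)$-filtration as in the proof of Lemma~\ref{van}. Everything else — the identification of $\varphi^\sharp$ and $\rho^\vee$ for the Frobenius, and the reduction via (\ref{pfwd}) — is formal given the material of \S\ref{sec:2-3}.
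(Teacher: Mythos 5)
Your proposal is correct and follows essentially the same route as the paper: the paper likewise computes ${}_{S'}H^0(U,\mc L_{\chi})$ via (\ref{pfwd}), decomposes the monomial basis by writing exponent vectors as $m\mbi i+\mbi j$ with $\mbi j\in[0,m-1]^s$, and groups the residues $\mbi j$ according to the character $\chi'$ determined by $m\chi'=\chi-\cl(\mbi j)$, matching each graded piece with $H^0(U',\mc L_{\chi'})_{\eta'}$. The one point to heed is that you should use your parenthetical alternative $H^0(U,\mc L_{\chi})_{\bullet}$ (as the paper does) rather than $Sz^{-\mbi k}$ with $\cl(\mbi k)=\chi$, since $\cl\colon\Z^s\to G^{\vee}$ need not be surjective when $N$ has torsion, so such a $\mbi k$ may not exist.
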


\proof
We identify $\Coh \mc X$ and $\Coh_{G}U$ by (\ref{eq}).
We consider the graded $S$-module 
$$
H^0(U,\mc L_{\chi})=\bigoplus_{\eta\in G^{\vee}}H^0(U,\mc L_{\chi})_{\eta}
$$
and recall that 
$$
H^0(U,\mc L_{\chi})_{\eta}\cong \bigoplus_{\substack{\mbi i\in\Z^{s}_{\ge 0}\\ \cl(\mbi i)=\chi+\eta}}\C z^{\mbi i},
$$
since the $G$-actions in (\ref{action1}) and (\ref{action2}) induce the action
on $H^0(U,\mc L_{\chi})$ given by $z^{\mbi i}\mapsto \chi_{\Bar{\mbi i}}\chi^{-1}z^{\mbi i}$.
We define a $G'^\vee$-graded $S'$-module $M=\bigoplus_{\eta'\in {G'}^{\vee}}M_{\eta'}$ by 
$$
M_{\eta'}:=H^0(U,\mc L_{\chi})_{m\eta'}
\cong \bigoplus_{\substack{\mbi k\in\Z^{s}_{\ge 0}\\ 
\cl(\mbi k)=\chi+m\eta'}}\C z^{\mbi k}.
$$
By (\ref{pfwd}), the $S'$-module $M$ corresponds to 
$F_{\ast}\mc L_{\chi}$ by the equivalence (\ref{grS}). 

We have a one-to-one correspondence between sets 
$\lbrace \mbi k\in\Z_{\ge 0}^{s}\mid \cl(\mbi k)=\chi+m\eta'\rbrace$ and 
$$
\lbrace (\mbi i,\mbi j)\in\Z_{\ge 0}^{s}\times [0,m-1]^s\mid m(\cl(\mbi i)-\eta')=\chi-\cl(\mbi j) \rbrace
$$ 
given by $\mbi k=m\mbi i+\mbi j$ such that $\mbi i=\lfloor\frac{\mbi k}{m}\rfloor$ and $\mbi j\in [0,m-1]^s$.
Thus for each $\eta'\in G'^{\vee}$, 
we have isomorphisms between components of 
$G'^\vee$-graded $S'$-modules
\begin{equation*}
M_{\eta'}
\cong \bigoplus_{\substack{\mbi j\in [0,m-1]^s}}
\left(
\bigoplus_{\substack{\mbi i\in\Z^s_{\ge 0}\\ 
\\ m(\cl(\mbi i)-\eta')=\chi-\cl(\mbi j)}}
\C {z}^{m\mbi i+\mbi j}
\right)
\cong  
\bigoplus_{\substack{\chi'\in {G'}^{\vee}}} 
\left(\bigoplus_{\substack{\mbi j\in [0,m-1]^s\\
m\chi'=\chi-\cl(\mbi j)}}
\left(\bigoplus_{\substack{\mbi i\in\Z^s_{\ge 0}\\ \cl(\mbi i)=\chi'+\eta'}}
\C {z'}^{\mbi i}z^{\mbi j}\right)\right).
\end{equation*}
To obtain the second isomorphism, we put $\chi'=\cl(\mbi i)-\eta'$.
The last component 
$\bigoplus_{\substack{\mbi i\in\Z^s_{\ge 0}\\ 
\cl(\mbi i)=\chi'+\eta'}}\C {z'}^{\mbi i}z^{\mbi j}$
is isomorphic to 
$H^0(U',\mc L_{\chi'})_{\eta'}$.
Hence, summing up all $\eta'\in G'^{\vee}$, 
we have an isomorphism 
$$
M \cong \bigoplus_{\substack{\chi'\in {G'}^{\vee}}} 
\bigoplus_{\substack{\mbi j\in [0,m-1]^s\\
m\chi'=\chi-\cl(\mbi j)}}
H^{0}(U',\mc L_{\chi'})
$$
of $S'$-modules.
Now the desired isomorphism
$$
F_{\ast}\mc L_{\chi}\cong 
\bigoplus_{\chi'\in {G'}^{\vee}}
\bigoplus_{\substack{\mbi j\in [0,m-1]^s\\
m\chi'=\chi-\cl(\mbi j)}}
\mc L_{\chi'}
= \bigoplus_{\substack{\chi'\in {G'}^{\vee}}} 
\mc L_{\chi'}^{\oplus m(\chi,\chi')}
$$
follows from (\ref{grS}).
\endproof

We denote by $\mk D_{\X}(\mc L_{\chi})$ 
the set of isomorphism classes of direct summands 
of $F_\ast \mc L_{\chi}$.
For $\chi\in G^\vee$ and $\chi'\in G'^\vee$, 
there exist some $\mbi k,\mbi k'\in \Z^s$ and 
$\mbi l,\mbi l'\in \Z^r$ such that 
$\mc L_{\chi}=\mo_{\X}(\mbi k\mbi D)_{\mbi l}$ and 
$\mc L_{\chi'}=\mo_{\X'}(\mbi k'\mbi D)_{\mbi l'}$.
Then we have $\chi=\chi_{\overline{\mbi k+\mbi l}}$ and 
$\chi'=\chi'_{\overline{\mbi k'+\mbi l'}}$ as in \S\ref{sec:2-2}.
If $m(\chi,\chi')>0$ then we have 
$$
m\chi'_{\overline{\mbi k'+\mbi l'}}=\chi_{\overline{\mbi k+\mbi l}}-\cl (\mbi j)
$$ 
for some $\mbi j\in \Z^s\cap[0,m-1]^s$, and hence
there exists an element $\mbi u\in\Z^{n+r}$ such that we have
$$
(\mbi k+{}^tB\mbi u)\oplus (\mbi l+{}^tA\mbi u)=(m\mbi k'+\mbi j)\oplus m\mbi l' \quad \mbox{ in } \Z^s\oplus\Z^r.
$$
Thus we have $\mbi k'=\lfloor \frac{\mbi k + {}^tB\mbi u}{m}\rfloor$ 
in $ \Z^s$, $\mbi l'=\frac{\mbi l+{}^tA\mbi u}{m}$ in $\Z^r$ and
\begin{equation}\label{eqn:Thomsen}
\mk D_{\X}(\mc L_{\chi}) =\left\lbrace\mo_\X\left(\lfloor \frac{\mbi k+{}^tB\mbi u}{m}\rfloor{\mbi D}\right)_{\frac{\mbi l+{}^tA\mbi u}{m}}\
 \Bigr|\ \mbi u\in\Z^{n+r}\cap[0,m-1]^{n+r}, \frac{\mbi l+{}^tA\mbi u}{m}\in\Z^r\right\rbrace.
\end{equation}
When $N$ is torsion free, we have $r=0$ and $N=\Z^n$. 
For each $i=1,\ldots,s$, we take a primitive generator $\mbi v_i\in N$ of $\rho_i$ and positive integer $b_i$ such that $\beta(\mbi f_i)=b_i\mbi v_i$.
We have 
$${}^tB\mbi u={}^t\left((\mbi u,b_1\mbi v_1),\ldots,(\mbi u,b_s\mbi v_s)\right),$$ 
where $(\mbi u,b_i\mbi v_i)\in \R$ is just the dot product on $\R^n$.
Consequently, (\ref{eqn:Thomsen}) is a generalization of Thomsen's result \cite{Th} for
smooth toric varieties.

The set $\mk D_\X(\mc O_\X)$ is stabilized for   sufficiently divisible integers $m$ in $F=F_m$, 
and denote this set by $\mk D_\X$.
By the above result, we have  
\begin{equation}\label{formula}
\mk D_\X 
=\left\lbrace\mo_\X\left(\lfloor {}^tB\mbi u\rfloor{\mbi D}\right)_{{}^tA\mbi u}\ \Bigr|\ \mbi u\in[0,1)^{n+r}, {}^tA\mbi u\in\Z^r\right\rbrace.
\end{equation}

\subsection{Reduction to toric orbifolds}\label{subsec:rtto}
We reduce the proof of Main theorem to the orbifold's case.
First take an arbitrary toric DM stack $\X$ and 
we use the notation in \S \ref{sec:4}.
Put $a:=a_1\cdots a_r$ and $\mbi a:=(a,\ldots,a)\in \Z^s$.
For simplicity, denote the root stack 
$\sqrt[\mbi a]{\mbi D/\mc X}$ by $\mc X'=\X_{\Sigma'}$
and consider the rigidification 
$\X'^{\text{rig}}$ of $\mc X'$. 
Then $\beta'$ is defined by the map
$$
\Z^s\stackrel{\begin{pmatrix}aB_1\\\mbi 0\end{pmatrix}}{\to}
\Z^{n+r}\twoheadrightarrow \coker A,$$
where $B_1=(b_{i,j})\in M(n,s,\Z)$.

Here we identify toric divisors on 
$\mc X'$ and $\mc X'^{\text{rig}}$, 
and denote by the same symbol 
$\mbi D'={}^t(\mc D'_1,\ldots,\mc D'_s)$.
For the rigidification morphism $\Psi'\colon
\X'\to\X'^{rig}$,
by (\ref{eqn:pullback}), we have
$\Psi'^{\ast}\mo_{\mc X'^{\text{rig}}}(\mbi k\mbi D')\cong
\mo_{\X'}(\mbi k\mbi D')$ for any $\mbi k\in \Z^s$.
Furthermore by (\ref{formula}), we have
\begin{equation*}
\begin{split}
\mk D_{\X'}
&=\left\lbrace\mo_{\X'}\left(\lfloor a{}^tB_1\mbi u_1\rfloor{\mbi D'}\right)_{\diag(a_1,\ldots,a_r)\mbi u_2}\ \Bigr|\ 
\mbi u_1\in[0,1)^{n}, \mbi u_2\in[0,1)^{r}, \diag(a_1,\ldots,a_r) \mbi u_2\in\Z^r\right\rbrace\\ 
&=\left\lbrace\mo_{\X'}\left(\lfloor a{}^tB_1\mbi u_1\rfloor{\mbi D'}\right)_{\mbi l}\ \Bigr|\ 
\mbi u_1\in[0,1)^{n}, \mbi l\in \Z^r\cap \prod_{i=1}^r[0,a_i)\right\rbrace,\\ 
\mk D_{\mc X'^{\text{rig}}}
&=\left\lbrace\mo_{\mc X'^{\text{rig}}}\left(\lfloor a{}^tB_1\mbi u_1\rfloor{\mbi D'}\right)\ \Bigr|\ 
\mbi u_1\in[0,1)^{n}\right\rbrace.
\end{split}
\end{equation*}
Hence we have $\mk D_{\mc X'}=
\bigcup_{\mbi l} (\Psi'^{\ast}
\mk D_{\mc X'^{\text{rig}}})\otimes \mo_{\mc X',\mbi l}$,
where $\mbi l$ runs over the set $\Z^r\cap \prod_{i=1}^r[0,a_i)$, and we define 
$\Psi'^{\ast}\mk D_{\mc X'^{\text{rig}}}$ in an obvious way.


\begin{lem}\label{reduction}
Let $\X$ be  a toric DM stack and 
$\mc X'^{\text{rig}}$ the rigidification of  
the root stack $\mc X'=\sqrt[\mbi a]{\mbi D/\mc X}$
as above. 
If $\mk D_{\mc X'^{\text{rig}}}$ generates 
$D^b(\mc X'^{\text{rig}})$, then $\mk D_{\mc X}$ 
generates $D^b(\mc X)$.
\end{lem}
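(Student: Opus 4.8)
The plan is to chain together three reductions. First I would use Lemma~\ref{lem:rootc-vanish}, which gives $\mathbb R\varphi_{\ast}\mo_{\mc X'}=\mo_{\mc X}$ for the root construction morphism $\varphi\colon\mc X'=\sqrt[\mbi a]{\mbi D/\mc X}\to\mc X$. Combined with Lemma~\ref{lem}, this shows that $D^b(\mc X')=\langle F_{\ast}\mo_{\mc X'}\rangle$ implies $D^b(\mc X)=\langle F_{\ast}\mo_{\mc X}\rangle$. Since $\langle F_\ast\mo_{\mc X'}\rangle=\langle\mk D_{\mc X'}\rangle$ for sufficiently divisible $m$ by definition of $\mk D$, it suffices to show that $\mk D_{\mc X'}$ generates $D^b(\mc X')$.

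Next I would pass from $\mc X'$ to its rigidification $\mc X'^{\text{rig}}$. By Corollary~\ref{cor:orb-vs-stack} applied to the rigidification morphism $\Psi'\colon\mc X'\to\mc X'^{\text{rig}}$, we have the orthogonal-type decomposition
$$
\Coh \mc X'\cong \bigoplus_{\mbi l\in\Z^r\cap\prod_i[0,a_i)}(\Psi'^{\ast}\Coh\mc X'^{\text{rig}})\otimes\mo_{\mc X',\mbi l},
$$
which passes to the bounded derived categories. So $D^b(\mc X')$ is generated by the subcategories $(\Psi'^{\ast}D^b(\mc X'^{\text{rig}}))\otimes\mo_{\mc X',\mbi l}$ as $\mbi l$ ranges over the indicated finite set. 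If $\mk D_{\mc X'^{\text{rig}}}$ generates $D^b(\mc X'^{\text{rig}})$, then $\Psi'^{\ast}\mk D_{\mc X'^{\text{rig}}}$ generates $\Psi'^{\ast}D^b(\mc X'^{\text{rig}})$ — here one needs that $\langle-\rangle$ is compatible with the exact functor $\mathbb L\Psi'^{\ast}$ and that $\Psi'^{\ast}$ of a locally free sheaf is locally free, so $\Psi'^{\ast}\mk D_{\mc X'^{\text{rig}}}$ consists of vector bundles; then tensoring by the line bundle $\mo_{\mc X',\mbi l}$ is an autoequivalence, hence preserves generation. Therefore the set $\bigcup_{\mbi l}(\Psi'^{\ast}\mk D_{\mc X'^{\text{rig}}})\otimes\mo_{\mc X',\mbi l}$ generates $D^b(\mc X')$.

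Finally I would invoke the explicit computation carried out just before the statement, namely the identity
$$
\mk D_{\mc X'}=\bigcup_{\mbi l}(\Psi'^{\ast}\mk D_{\mc X'^{\text{rig}}})\otimes\mo_{\mc X',\mbi l},
$$
with $\mbi l$ running over $\Z^r\cap\prod_{i=1}^r[0,a_i)$, which was derived from formula~(\ref{formula}) for both $\mk D_{\mc X'}$ and $\mk D_{\mc X'^{\text{rig}}}$ together with $\Psi'^{\ast}\mo_{\mc X'^{\text{rig}}}(\mbi k\mbi D')\cong\mo_{\mc X'}(\mbi k\mbi D')$. Combining this with the previous paragraph, $\mk D_{\mc X'}$ generates $D^b(\mc X')$, and then the first reduction gives that $\mk D_{\mc X}$ generates $D^b(\mc X)$, as wanted.

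The only genuinely delicate point is the middle step: one must be careful that the direct-sum decomposition of $\Coh\mc X'$ in Corollary~\ref{cor:orb-vs-stack} is compatible with the triangulated structure and with the operation $\langle-\rangle$, so that generation of each summand category by a vector-bundle set lifts to generation of $D^b(\mc X')$ by the union. The remaining ingredients — Lemmas~\ref{lem} and~\ref{lem:rootc-vanish}, and the bookkeeping identity for $\mk D_{\mc X'}$ — are already in place, so this is the main obstacle but a routine one.
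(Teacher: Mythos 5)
Your proposal is correct and follows essentially the same route as the paper: reduce via Lemmas~\ref{lem:rootc-vanish} and \ref{lem} to generation of $D^b(\mc X')$ by $\mk D_{\mc X'}$, then combine the decomposition of $\Coh\mc X'$ from Theorem~\ref{thm:orb-vs-stack} (equivalently Corollary~\ref{cor:orb-vs-stack}) with the identity $\mk D_{\mc X'}=\bigcup_{\mbi l}(\Psi'^{\ast}\mk D_{\mc X'^{\text{rig}}})\otimes\mo_{\mc X',\mbi l}$ computed just before the lemma. You simply spell out the intermediate compatibility checks that the paper leaves implicit.
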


\proof
Note that
by Lemmas~\ref{lem:rootc-vanish} and \ref{lem}, 
it is enough to show that $\mk D_{\mc X'}$ 
generates $D^b(\mc X')$.
The assumption and Theorem \ref{thm:orb-vs-stack} 
implies that the set $\mk D_{\mc X'}=
\bigcup_{\mbi l} (\Psi'^{\ast}
\mk D_{\mc X'^{\text{rig}}})\otimes \mo_{\mc X',\mbi l}$
generates $D^b(\X')$. 
Hence we obtain the assertion.
\endproof



\section{Root stacks of the projective plane}\label{sec:root-plane}

\subsection{Root stacks of weighted projective spaces}
First  let us recall the definition of 
weighted projective space $\mb P(\mbi a)$ for 
$\mbi a={}^t(a_1,\ldots , a_{n+1})\in\Z^{n+1}_{>0}$. Take a finitely generated abelian group $N:=\Z^{ n+1}/\Z\mbi a$
and  put $\beta$ to be the quotient map $\Z^{ n+1}\to N$.
Let us take the canonical basis $\mbi e_i$'s of $\Z^{ n+1}$ and 
consider the fan $\Delta$ in $N_{\mb R}$, consisting of cones $\sum_{i\neq j} \R_{\ge 0} \beta_\R(\mbi e_i)$ 
for $j=1,\ldots, n+1$ and their faces.
We often denote   by $\mb P(\mbi a)$ the toric DM stack associated  to the stacky fan 
$\Sigma=(\Delta,\beta)$. 
This is called a \emph{weighted projective space}.

%
%

For 
$
\mbi b={}^t(b_1,\ldots , b_{n+1})\in \Z_{>0},
$
we consider a root stack 
$\X:=\sqrt[\mbi b]{{\mbi D}/\mb P(\mbi a)}$.
Take a projective resolution of $N$
$$
0\to \Z\stackrel{\mbi a}\to \Z^{n+1}\to N=\Z^{ n+1}/\Z\mbi a\to 0.
$$
Then in terms in \S\ref{sec:2-2}, 
$\X$ is associated with matrices 
$$
A=\mbi a\in M(n+1,1),\quad B=\diag \begin{pmatrix}b_1,\ldots,b_{n+1}\end{pmatrix}\in M(n+1,n+1).
$$
%

Assume furthermore that 
$\mbi a=\mbi b=\mbi c={}^t(c,\ldots,c)\in\Z^{n+1}$ 
for an integer $c\in\Z$.
According to (\ref{pic}), we have 
$$
\Pic \sqrt[\mbi c]{\mbi D/\mb P(\mbi c)}\cong 
\frac{\Z\mbi f_{1}^{\ast}\oplus\cdots\oplus\Z\mbi f_{n+1}^{\ast}\oplus\Z\mbi g^{\ast}}
{\langle c\mbi f^{\ast}_1+c\mbi g^{\ast},\ldots,c\mbi f^{\ast}_{n+1}+
c\mbi g^{\ast}\rangle}\quad\mc O_{\X}(\sum l_i\mc D_i)_{k\Bar{\mbi g^{\ast}}}\mapsto
\sum l_i\Bar{\mbi f}^{\ast}_i+k\Bar{\mbi g^{\ast}},
$$
and hence we have an isomorphism 
\begin{equation}\label{deg}
h\colon\Pic \sqrt[\mbi c]{\mbi D/\mb P(\mbi c)}\to \Z\oplus\Z_{c}^{\oplus n+1}
\quad \sum l_i\Bar{\mbi f}^{\ast}_i+k\Bar{\mbi g^{\ast}}\mapsto
 \begin{pmatrix}\sum l_i-k\\\Bar{l}_1\\\vdots\\\Bar{l}_{n+1}\end{pmatrix}.
\end{equation}
For any $\mc L\in \Pic \sqrt[\mbi c]{\mbi D/\mb P(\mbi c)}$, we call the first component 
$\sum l_i-k$ of $h(\mc L)$ the \emph{degree} of $\mc L$ and denote it by $\deg \mc L$.


\subsection{Root stacks of the projective plane}\label{subsection:root-of-plane}
Consider  the root stack 
$\X=\sqrt[\mbi b]{\mbi D/\mb P^2}=
\X(\PP^2,b_1D_1+b_2D_2+b_3D_3)$, 
where $\mbi b={}^t(b_1.b_2,b_3)$ and $D_i=[z_i=0]$ in $\PP^2$.
The map 
$$
\beta\colon \Z^3
{\to} N=\Z^2
$$ 
is given by the matrix
$$
B=\begin{pmatrix}b_1&0&-b_3\\0&b_2&-b_3\end{pmatrix}.
$$
%

Moreover we put an additional assumption 
that  $b_1=b_2=b_3=c$. 
In this case by (\ref{formula}), we have 
$$
\mk D_{\X}=\left\lbrace \mo_{\mc X}\left(i(\mc D_1-\mc D_3)+j(\mc D_2-\mc D_3)+k\mc D_3 \right)\mid i,j\in[0,c),k=0,-1,-2\right\rbrace.
$$
By \cite[Proposition~5.1]{BHu}, 
this set forms a full strong exceptional collection on $\X$.
In particular we know that $\mk D_{\X}$ generates 
$
D^b(\X).
$
\begin{rem}
In general for root stacks $\mc X=\mc X(\PP^d,\sum_{i=1}^{d+1} b_iD_i)$ 
of the projective space of arbitrary dimension $d$ 
we see that $\mk D_{\mc X}$ generate $D^b(\mc X)$ as follows, 
where $b_i\ge 0$ and $D_i=[z_i=0]$.
By Lemma \ref{lem} we may assume that $b_1=\cdots=b_{d+1}$.
Then using \cite[Proposition~5.1]{BHu} the above argument holds 
for arbitrary dimension.  
\end{rem}


\section{Weighted blow ups and Frobenius morphisms}\label{sec:5}


\subsection{Weighted blow ups of toric DM orbifolds}\label{subsec:wbu}

Let us consider a toric DM orbifold $\mc X=\mc X_{\Delta}$
associated with a stacky fan $\Sigma=(\Delta,\beta)$.
Define primitive vectors $\mbi v_i$ in $N$  for $i=1,\ldots,s$ 
so that $\rho _i=\R_{\ge0}\mbi v_i$, namely, there exists $b_i\in \Z_{>0}$
 such that $b_i\mbi v_i=\beta (\mbi f_i)$.
Then as in \S \ref{sec:2-1}, $\X$ is also described as 
$
\X\bigl(X,\sum_{i=1}^s b_iD_i\bigr).
$
Take a cone $\sigma\in\Delta$ spanned by $\mbi v_1,\ldots ,\mbi v_l$, 
and  another primitive vector $\mbi v_{s+1}$ which is in the relative interior of $\sigma$;
there exist positive integers $h_i,m\in \Z_{>0}$ with coprime $h_i$'s satisfying
$$
m\mbi v_{s+1}=\sum_{i=1}^lh_i\mbi v_i.
$$
Then we have a new simplicial fan $\Delta'$ which is the subdivision of $\Delta$ 
obtained from the the star shaped decomposition of the fan $\Delta$,
by adding the ray  $\R_{\ge 0}\mbi v_{s+1}$.
Then we obtain a \emph{weighted blow up} of a toric variety $X=X_{\Delta}$:
$$
\Psi\colon X'=X_{\Delta'}\to X. 
$$

Suppose that positive integers $b_{s+1}$ and $c_i$ satisfy
\begin{equation}\label{eqn:wtb}
b_{s+1}\mbi v_{s+1}=\sum_{i=1}^lc_ib_i\mbi v_i.
\end{equation}
Then define  maps $\gamma_1,\gamma_2$ as
$$
\gamma_1\colon\bigoplus_{i=1}^{s+1}\Z \mbi f'_i\to\bigoplus_{i=1}^{s}\Z\mbi f_i\quad 
\mbi f'_i\mapsto
\begin{cases}
\mbi f_i&\text{ if }i\le s\\
\sum_{i=1}^lc_i\mbi f_i&\text{ if } i=s+1
\end{cases}
$$
and
$
\gamma_2=\id_N,
$
and take a toric DM orbifold $\mc X'=\X_{\Sigma '}$
associated with the stacky fan $\Sigma'=(\Delta', \beta\circ \gamma_1)$.
Note that $b_{s+1}\mbi v_{s+1}=\beta\circ \gamma_1(\mbi f'_{s+1})$.
Then we obtain a morphism
$$
\psi\colon \X'\to \X,
$$
called \emph{weighted blow-up} of a toric DM orbifold $\X$.


\begin{lem}\label{lem:van2*}
We have $\R\psi_{\ast}\mo_{\mc X'}=\mo_{\mc X}$.
\end{lem}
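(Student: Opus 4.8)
\emph{Proof plan.} The plan is to reduce $\R\psi_\ast\mo_{\X'}=\mo_\X$ to the classical fact that a proper birational morphism onto a smooth target has trivial derived pushforward of the structure sheaf. First I would record that $\psi$ is proper and birational: properness holds because $\Delta'$ is a subdivision of the complete fan $\Delta$, so both $\X$ and $\X'$ are proper over $\C$; birationality holds because $\Sigma'=(\Delta',\beta\circ\gamma_1)$ is defined over the \emph{same} group $N$ as $\Sigma$ and $\gamma_2=\id_N$, so $\psi$ is the identity on the open dense torus $T_N$ shared by $\X'$ and $\X$, hence an isomorphism over $T_N$.

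For the vanishing itself I would use that a toric DM orbifold is a \emph{smooth} DM stack, being a quotient $[U/G]$ of a smooth open $U\subset\C^{s}$ by a group acting with finite stabilizers. The assertion is \'etale-local on $\X$, so I would take an \'etale cover of $\X$ by substacks of the form $[V/\Gamma]$ with $V$ smooth affine and $\Gamma$ finite; passing to a suitable $\Gamma$-torsor, the pullback of $\psi$ becomes a $\Gamma$-equivariant proper birational morphism $g\colon\hat V\to V$ of smooth schemes, for which $\R g_\ast\mo_{\hat V}=\mo_V$ by the classical vanishing for proper birational morphisms between smooth schemes; these isomorphisms are $\Gamma$-equivariant, hence descend and glue to $\R\psi_\ast\mo_{\X'}=\mo_\X$. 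As a check, and as an alternative staying inside the paper's combinatorics, the $\mathcal H^{0}$-part $\psi_\ast\mo_{\X'}=\mo_\X$ follows at once from Lemma~\ref{van}(i): for $\psi$ one has $\gamma_2=\id_N$ (so $\gamma_2^{\ast}$ is surjective and $\gamma_2^{\ast}[1]$ injective) and the matrix of $\gamma_1$ contains, for each of the $s$ rays of $\X$, a column equal to a standard basis vector. The remaining vanishing $\R^{q}\psi_\ast\mo_{\X'}=0$ $(q>0)$ can then be obtained by pushing down to the coarse moduli spaces $\pi\colon\X\to X$, $\pi'\colon\X'\to X'$ (both tame, with $\R\pi_\ast\mo_\X=\mo_X$, $\R\pi'_\ast\mo_{\X'}=\mo_{X'}$) together with the toric identity $\R\Psi_\ast\mo_{X'}=\mo_X$ for the weighted blow-up $\Psi\colon X'\to X$ (a standard toric computation, via a smooth refinement of $\Delta'$): since $\pi\psi=\Psi\pi'$,
\[
\R\pi_\ast\R\psi_\ast\mo_{\X'}\cong\R(\Psi\pi')_\ast\mo_{\X'}\cong\R\Psi_\ast\mo_{X'}=\mo_X\cong\R\pi_\ast\mo_\X,
\]
and a chase of adjunction units shows the unit $\mo_\X\to\R\psi_\ast\mo_{\X'}$ becomes an isomorphism after $\R\pi_\ast=\pi_\ast$, whence $\pi_\ast(\R^{q}\psi_\ast\mo_{\X'})=0$ for all $q\ge1$.

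The point requiring care is the stackiness of the exceptional locus: $\psi$ is generally \emph{not} representable --- a weighted blow-up creates a stacky exceptional divisor $\mc D'_{s+1}$ --- so in the \'etale-local argument one must control the enlargement of the isotropy group when forming $\hat V$, and in the coarse-space argument one must still pass from $\pi_\ast(\R^{q}\psi_\ast\mo_{\X'})=0$ to $\R^{q}\psi_\ast\mo_{\X'}=0$, which is not automatic since $\pi_\ast$ is not faithful. Both can be handled using the local structure of $\X$ and the root-stack description of the exceptional divisor (Theorem~\ref{thm:orb-vs-stack}); everything else is formal, and I expect this to be the only genuine obstacle.
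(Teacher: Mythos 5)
Your proposal is a plan rather than a proof: both routes you offer stop exactly at the step that carries the real content, and you defer it with ``I expect this to be the only genuine obstacle.'' Concretely: (a) in the \'etale-local route, the base change $\X'\times_{\X}V$ of a non-representable $\psi$ is a Deligne--Mumford stack, not a scheme, so the asserted ``$\Gamma$-equivariant proper birational morphism $g\colon\hat V\to V$ of smooth schemes'' does not exist as stated, and you give no construction of the torsor $\hat V$ that would repair this; (b) in the coarse-space route, the conclusion $\pi_{\ast}(\R^{q}\psi_{\ast}\mo_{\X'})=0$ genuinely does not yield $\R^{q}\psi_{\ast}\mo_{\X'}=0$, because $\pi_{\ast}$ only retains the isotropy-invariant part of a sheaf on $\X$ (already for $\mc B G\to\mathrm{pt}$ it kills every nontrivial irreducible representation). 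So the higher vanishing $\R^{q}\psi_{\ast}\mo_{\X'}=0$ for $q>0$ is not established. The parts you do carry out are fine: properness and birationality of $\psi$ (via $\gamma_2=\id_N$ and the subdivision $\Delta'$ of the complete fan $\Delta$), and the $\mathcal H^{0}$-identity $\psi_{\ast}\mo_{\X'}=\mo_{\X}$ via Lemma~\ref{van}(i), whose hypotheses you verify correctly since the matrix of $\gamma_1$ is $(I_s\,|\,\mbi c)$.

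The missing idea is to combine your two reductions in the right order: first \'etale-localize on the \emph{target}, replacing $\X$ by a smooth affine scheme $V$; then take the coarse moduli space $W$ of the pullback stack $\mc W=\X'\times_{\X}V$ (not of $\X$ and $\X'$ separately). Since $\mc W$ is a smooth, tame DM stack, $\R(\mc W\to W)_{\ast}\mo_{\mc W}=\mo_{W}$; since $W$ has quotient, hence rational, singularities and $W\to V$ is proper birational onto a smooth scheme, $\R(W\to V)_{\ast}\mo_{W}=\mo_{V}$; composing gives the claim \'etale-locally, hence globally. With that insertion your argument closes. For comparison, the paper does none of this: it simply observes that the assertion is contained in the proof of \cite[Theorem~9.1]{BH1}, where the pushforward along such subdivision morphisms is computed directly in terms of the graded module description of \S\ref{sec:2-3}.
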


\begin{proof}
The assertion is shown in the proof of \cite[Theorem 9.1]{BH1}.
\end{proof}

Take a minimum integer $h>0$ such that each $h\frac{h_i}{b_i}$ is an integer.
We define $b_{s+1}:=hm$ and $c_i:=h\frac{h_i}{b_i}$. Then the equation (\ref{eqn:wtb})
is achieved, and hence
we obtain a morphism $\psi\colon \X'\to \X$ as above.
We call it the \emph{weighted blow-up of $\X$ associated with $\Psi$}.

\begin{rem}\label{rem:2dim}
In the two dimensional case, recall that a torus equivariant resolution
$$
\Theta\colon Y\to X
$$
 of singularities of  $X$ is a $t$-times composition of weighted blow-ups (cf. \cite[\S 2.6]{Fu}). 
Hence for such a resolution $\Theta$,
we can construct an associated morphism of toric DM orbifolds:
$$
\theta \colon \mc Y=\X(Y,\sum^{s+t} b_iD_i)\to \X=\X(X,\sum^s b_iD_i).
$$
\end{rem}


\subsection{Weighted blow ups of two dimensional toric DM stacks and Frobenius morphisms}
\label{subsec:blow-up}

We consider a two dimensional toric DM orbifold 
$\mc X=\mc X_{\Sigma}=\X(X,\sum_{i=1}^s b_iD_i)$
associated with a stacky fan $\Sigma=(\Delta,\beta)$ 
in a free abelian group 
$N=\Z\mbi e_1\oplus \Z\mbi e_2$.

Take a non-singular cone $\sigma\in \Delta$
generated by vectors $\mbi v_1$ and $\mbi v_2$, 
and consider the  new simplicial fan $\Delta'$ 
which is the subdivision of $\Delta$ 
obtained from the the star shaped decomposition of $\sigma$,
by adding the ray  $\R_{\ge 0}\mbi v_{s+1}$ in which we put 
$\mbi v_{s+1}:=\mbi v_1+\mbi v_2$.  
Consider the (weighted) blow-up 
$\Psi\colon X'\to X$ associated with the subdivision.
We may assume that $\mbi v_1=\mbi e_1$ and $\mbi v_2=\mbi e_2$,
and assume furthermore that $b_1=b_2$, which we denote by $c$. 
Let us define the blow-up
$\psi\colon \X'\to \X$ 
associated with $\Psi$.
Then we have 
\begin{equation}\label{picbup}
\psi^{\ast}\mc D_i=\mc D'_i+\mc D'_{s+1} \mbox{ for } i=1,2, \mbox{ and }
\psi^{\ast}\mc D_i=\mc D'_i \mbox{ for } i \ne 1,2. 
\end{equation}
Moreover  we have $b_{s+1}=c$.
By $\mbi v_{s+1}=\mbi e_1+\mbi e_2$, 
we have an isomorphism
$$
\mc{D}'_{s+1}\cong \sqrt[\mbi c]{\mbi D/\mathbb{P}(c,c)}
$$ 
for $\mbi c={}^t(c,c)$ by Lemma~\ref{stackisom}.
Let us define $\mc Q=[z_1=z_2=0]$.
Then we have the following diagram:
$$
\xymatrix{ 
{\X'} \ar[r]^{\psi }&\mc X \\
\mc D'_{s+1} \ar@{^{(}->}[u]^{j'} \ar[r]^p  &\mc Q  \ar@{^{(}->}[u]_{j}   .
}
$$
%


\begin{lem}\label{orlov}
There exists a semi-orthogonal decomposition
$$
D^b({\X'})=\langle j'_{\ast}\lbrace\mc L\in\Pic \mc D'_{s+1}\mid \deg\mc L=-1\rbrace, \mathbb{L}\psi^{\ast}D^b(\mc X)\rangle.
$$
\end{lem}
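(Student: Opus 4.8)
The plan is to recognize this as an instance of Orlov's blow-up formula, adapted to the toric DM orbifold setting. The geometric picture is that $\psi\colon \X'\to\X$ is a blow-up of the smooth point (more precisely, the closed substack $\mc Q=[z_1=z_2=0]$, which is the classifying substack of the cone generated by $\mbi v_1=\mbi e_1$ and $\mbi v_2=\mbi e_2$) with exceptional divisor $\mc D'_{s+1}\cong\sqrt[\mbi c]{\mbi D/\PP(c,c)}$, and $p\colon\mc D'_{s+1}\to\mc Q$ is the structure morphism. Since $\mc Q$ is a point-like substack (a root stack of a point, hence $D^b(\mc Q)$ has a very simple structure), the exceptional locus is a root stack of $\PP(c,c)$ fibered over it. I would first establish the general Orlov-type semi-orthogonal decomposition for such blow-ups, then specialize.

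First I would set up the two functors: $\mathbb L\psi^{\ast}\colon D^b(\mc X)\to D^b(\X')$, which is fully faithful because $\mathbb R\psi_{\ast}\mo_{\X'}=\mo_{\X}$ by Lemma \ref{lem:van2*} (projection formula plus this gives $\mathbb R\psi_{\ast}\mathbb L\psi^{\ast}\cong\id$); and $j'_{\ast}(p^{\ast}(-)\otimes\mc O(k\mc D'_{s+1})|_{\mc D'_{s+1}})$ for suitable twists, restricted to the relevant piece of $\Pic\mc D_{s+1}$. The key computations are the standard ones: (a) $\mathbb R p_{\ast}\bigl(\mc O_{\mc D_{s+1}}(k\mc D'_{s+1}|_{\mc D_{s+1}})\bigr)$ for $k$ in the relevant range vanishes, which forces the semi-orthogonality $\Hom^{\bullet}(j'_{\ast}\mathcal{F},\,\mathbb L\psi^{\ast}\mathcal{G})=0$ for $\deg\mathcal{F}=-1$; (b) the normal bundle computation, i.e. $\mc O(\mc D'_{s+1})|_{\mc D'_{s+1}}$ has degree $-1$ on the fibers of $p$, which one reads off from \eqref{picbup} and the intersection theory on the root stack $\sqrt[\mbi c]{\mbi D/\PP(c,c)}$; and (c) generation: any object of $D^b(\X')$ supported away from $\mc D'_{s+1}$ comes from $D^b(\mc X\setminus\mc Q)$, and one resolves a general object using the triangle $j'_{\ast}\mathbb L j'^{\ast}(-)\otimes\mc O(-\mc D'_{s+1})\to(-)\to(-)\otimes\mc O_{\mc D'_{s+1}}$-type arguments, or more directly by imitating Orlov's proof with the ideal sheaf filtration $\mc O\supset\mc O(-\mc D'_{s+1})\supset\mc O(-2\mc D'_{s+1})\supset\cdots$.

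An alternative—and probably cleaner—route exploits that $\mc Q$ is essentially a point: one has $D^b(\mc Q)\cong\langle\mo_{\mc Q}\rangle$ up to the finite stabilizer, so $D^b(\mc D'_{s+1})$ decomposes via the weighted-projective-space semiorthogonal decomposition of \cite[Proposition~5.1]{BHu} (or by Theorem \ref{thm:orb-vs-stack} applied to $\mc D'_{s+1}=\sqrt[\mbi c]{\mbi D/\PP(c,c)}$), and the $\deg\mc L=-1$ stratum $\{\mc L\in\Pic\mc D_{s+1}\mid\deg\mc L=-1\}$ is precisely the admissible subcategory that glues correctly with $\mathbb L\psi^{\ast}D^b(\mc X)$. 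So I would: (1) verify full faithfulness of $\mathbb L\psi^{\ast}$; (2) verify the semiorthogonality by the pushforward/normal-bundle computation on $p$; (3) verify that $\{j'_{\ast}\mc L\mid\deg\mc L=-1\}$ really is a (semiorthogonal, admissible) subcategory, using that $\Hom^{\bullet}_{\mc D_{s+1}}(\mc L,\mc L')$ for $\deg\mc L=\deg\mc L'=-1$ behaves like $\Hom$ on a point—again via \cite[Proposition~5.1]{BHu}; (4) prove generation by the ideal-sheaf filtration argument.

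The main obstacle I expect is step (4), generation—specifically, controlling the boundary terms of the filtration by powers of $\mc O(-\mc D'_{s+1})$ and showing that each graded piece $\mathcal{F}\otimes\mc O(-k\mc D'_{s+1})|_{\mc D'_{s+1}}$ lands in the subcategory generated by the two declared pieces, given the degree-shift bookkeeping on the weighted-projective-space factor. In Orlov's classical setting this is routine because the exceptional fiber is an honest projective space with its Beilinson resolution; here $\mc D'_{s+1}$ is a root stack of $\PP(c,c)$, so I would need the analogue of Beilinson's resolution on $\sqrt[\mbi c]{\mbi D/\PP(c,c)}$, which is exactly what \cite[Proposition~5.1]{BHu} supplies. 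The twist by $\mc O(-\mc D'_{s+1})|_{\mc D'_{s+1}}$ of degree $-1$ then shifts the window of the resolution by one each time, and after finitely many steps everything is expressed in terms of $\mathbb L\psi^{\ast}D^b(\mc X)$ and the $\deg=-1$ stratum—but making the indexing precise, including the torsion part of $\Pic\mc D_{s+1}\cong\Z\oplus\Z_c^{\oplus 2}$ from \eqref{deg}, is the delicate bookkeeping that the write-up must handle carefully.
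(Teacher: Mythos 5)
Your identification of the statement as an Orlov-type blow-up formula is correct, and your steps (1)--(3) (full faithfulness of $\mathbb{L}\psi^{\ast}$, semiorthogonality via $\mathbb{R}p_{\ast}$ of the relative twists, admissibility of the degree $-1$ stratum) correspond to what the paper simply takes for granted by reference to Orlov when it asserts that the right-hand side $\mc T$ is admissible. Where you genuinely diverge is the generation step, which is the only step the paper actually writes out, and which you yourself flag as the main obstacle without resolving it. You propose a Beilinson-type resolution on $\mc D'_{s+1}\cong\sqrt[\mbi c]{\mbi D/\mathbb{P}(c,c)}$ from \cite[Proposition~5.1]{BHu} combined with the filtration by powers of $\mc O(-\mc D'_{s+1})$ and the attendant degree-window bookkeeping in $\Pic\mc D'_{s+1}\cong\Z\oplus\Z_c^{\oplus 2}$. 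The paper instead shows directly that ${}^{\perp}\mc T=0$: for $A\in{}^{\perp}\mc T$ the Orlov argument produces $B\in D^b(\mc Q)$ with $\mathbb{L}j'^{\ast}A\cong p^{\ast}B$ such that $B\cong 0$ implies $A\cong 0$; then, since $\mc Q\cong[Q/(\Z_c\times\Z_c)]$ is a quotient of a point, every object of $D^b(\mc Q)$ is a direct sum of shifts of the $c^2$ line bundles $\mc O_{\mc Q,\mbi i}$, so it suffices to kill $\Hom_{\mc Q}(B,\mc O_{\mc Q,\mbi i}[l])$. This is done by exhibiting the degree-zero line bundle $\mc F=\mc O_{\mc D'_{s+1}}(i_1\mc D'_1+i_2\mc D'_2)_{i_1+i_2}$ satisfying $\mathbb{R}p_{\ast}\mc F\cong\mc O_{\mc Q,\mbi i}$ and $j'_{\ast}\mc F\cong\mathbb{L}\psi^{\ast}j_{\ast}\mc O_{\mc Q,\mbi i}$ (the latter via (\ref{emb}) and the Koszul resolution of $j_{\ast}\mc O_{\mc Q,\mbi i}$), so that the two adjunctions convert the Hom on $\mc Q$ into a Hom from $A$ into an object of $\mc T$, which vanishes. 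This exploitation of the zero-dimensionality of the center --- reducing generation to a finite Hom-vanishing check against one well-chosen sheaf --- is what lets the paper avoid the resolution bookkeeping entirely; your route is workable in principle but would essentially require redoing \cite[Proposition~5.1]{BHu} relatively over $\mc Q$ and tracking the window shift under each twist, which is exactly the part your proposal leaves open.
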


\proof
Let us denote the category in the r.h.s. by $\mc T$.
We show below that any objects in 
${}^{\perp}\mc T$ are isomorphic to $0$.
This completes the proof, since 
$\mc T$ is an admissible subcategory 
of $D^b(\mc X')$.

By Lemma~\ref{stackisom}, we have an isomorphism 
$$
\mathcal{ Q}\cong
\bigr[Q/\left(\Z_{c}\times\Z_{c}\right)\bigr],
$$
where $Q$ is a point.
By a similar argument in \cite[Theorem~4.3]{Or}, 
we see that for an object $A\in {}^{\perp}\mc T$,
there exists an object $B$ of $D^b(\mc Q)$ satisfying
\begin{equation}\label{adj1}
\mathbb{L}j'^{\ast}A={p}^{\ast}B,
\end{equation} 
and that $B\cong 0$ implies $A\cong 0$. 
We have
$$
\mo_{\mc Q,\mbi i}=j^{\ast}\mo_{\X}(i_1\mc D_1+i_2\mc D_2)
$$
for $\mbi i={}^t(i_1,i_2)\in \Z^{2}$. 
Since every object of $D^b(\mc Q)$ is a direct sum of
$\mc O_{\mc Q, \mbi i}[l]$ for some $l\in\Z$,
it is enough to show 
$\Hom_{\mc Q}(B,\mo_{\mc Q,\mbi i}[l])=0$ 
for any $\mbi i$ and $l$.
Take an invertible sheaf
$$
\mc F:=\mc O_{\mc D'_{s+1}}(i_1\mc D'_1+i_2\mc D'_2)_{i_1+i_2}
$$ 
on $\mc D'_{s+1}$,
and then we have 
\begin{equation}\label{adj2}
\mathbb{R}p_{\ast}\mc F\cong
\mc O_{\mc Q, \mbi i},\quad j'_{\ast}\mc F\cong
\mathbb{L}\psi^{\ast}j_{\ast}\mc O_{\mc Q, \mbi i}.
\end{equation}
%
The first isomorphism  
can be checked by similar computations 
in \cite[Proposition~4.1]{BHu}, and
the second is directly proved by the use of 
(\ref{emb}) and the Koszul resolution of 
$j_{\ast}\mc O_{\mc Q, \mbi i}$:
\begin{align*}
0\to\mc O_{\mc X}\left((i_1-1)\mc D_1+
(i_2-1)\mc D_2\right)\to 
\mc O_{\mc X}\left((i_1-1)\mc D_1+
i_2\mc D_2\right)\oplus
\mc O_{\mc X}\left(i_1\mc D_1+
(i_2-1)\mc D_2\right)\\ 
\to\mc O_{\mc X}\left(i_1\mc D_1+
i_2\mc D_2\right)\to
j_{\ast}\mc O_{\mc Q,\mbi i}\to 0.
\end{align*}
By $p^*\dashv \mathbb Rp_*, 
\mathbb Lj'^*\dashv j'_*$, 
(\ref{adj1}) and (\ref{adj2}), 
we get equalities
$$
\Hom_{\mc Q}(B,\mc O_{\mc Q, \mbi i}[l])=
\Hom_{\mc D_{s+1}}(\mathbb{L}j'^{\ast}A,\mc F[l])=
\Hom_{{\X'}}(A,\mathbb{L}\psi^{\ast}j_{\ast}
\mc O_{\mc Q, \mbi i}[l])=0,
$$
since we have 
$\mathbb{L}\psi^{\ast}j_{\ast}\mc O_{\mc Q, \mbi i}
\in \mc T$.
This implies $B\cong 0$, which completes the proof.
\endproof


\begin{lem}\label{keylemma}
If $\mk D_{\mc X}$ generates $D^b(\mc X)$, 
then $\mk D_{{\X'}}$ generates $D^b({\X'})$.
\end{lem}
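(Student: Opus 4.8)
The plan is to exploit the semi-orthogonal decomposition of Lemma \ref{orlov} together with the computation of $\mk D_{\X'}$ in terms of $\mk D_{\X}$ coming from the blow-up formulas (\ref{picbup}) and the Frobenius summand formula (\ref{formula}). First I would observe that, by Lemma \ref{orlov}, it suffices to show that $\langle \mk D_{\X'}\rangle$ contains both pieces of the semi-orthogonal decomposition
$$
D^b(\X')=\langle j'_{\ast}\{\mc L\in\Pic\mc D_{s+1}\mid \deg\mc L=-1\},\ \mathbb L\psi^{\ast}D^b(\mc X)\rangle.
$$
For the second piece: since $\mk D_{\mc X}$ generates $D^b(\mc X)$ by hypothesis, and $\mathbb L\psi^{\ast}$ is exact (or at least sends a locally free resolution to a bounded complex, using Corollary \ref{cor:finite-length}), it is enough to show $\psi^{\ast}\mk D_{\mc X}\subset\langle\mk D_{\X'}\rangle$. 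Now $\psi^{\ast}$ acts on the relevant line bundles via (\ref{picbup}), namely $\psi^{\ast}\mc D_i=\mc D_i'+\mc D_{s+1}'$ for $i=1,2$ and $\psi^{\ast}\mc D_i=\mc D_i'$ otherwise; comparing the explicit description of $\mk D_{\mc X}$ via (\ref{formula}) with that of $\mk D_{\X'}$, one checks that each $\psi^{\ast}\mc L$ for $\mc L\in\mk D_{\mc X}$ is literally an element of $\mk D_{\X'}$, because the floor-function data on the new cone $\sigma=\R_{\ge0}\mbi v_1+\R_{\ge0}\mbi v_2$ is determined by that on $X$ (the new ray $\mbi v_{s+1}=\mbi v_1+\mbi v_2$ contributes $\lfloor u_1+u_2\rfloor$ to the new coordinate, which is exactly what the pullback produces). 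This is the routine bookkeeping step.

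The substantive step is the first piece: showing that $j'_{\ast}\mc L\in\langle\mk D_{\X'}\rangle$ for every $\mc L\in\Pic\mc D_{s+1}$ with $\deg\mc L=-1$. Here $\mc D_{s+1}'\cong\sqrt[\mbi c]{\mbi D/\PP(c,c)}$ by the isomorphism noted before Lemma \ref{orlov}, so $\Pic\mc D_{s+1}'$ is described by the degree isomorphism (\ref{deg}), and the degree-$(-1)$ line bundles form a finite set indexed by $\Z_c^{\oplus 2}$. The idea is to produce each such $j'_{\ast}\mc L$ as an iterated cone of objects of the form $\mo_{\X'}(\mbi k\mbi D')_{\mbi l}$ lying in $\mk D_{\X'}$, by writing down a Koszul-type resolution of $j'_{\ast}\mc L$ on $\X'$. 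Concretely, $\mc D_{s+1}'$ is a Cartier divisor on $\X'$, so one has the two-term resolution $0\to\mc O_{\X'}(-\mc D_{s+1}')\to\mc O_{\X'}\to j'_{\ast}\mc O_{\mc D_{s+1}'}\to 0$, and after twisting, $j'_{\ast}\mc L$ fits into a triangle with $\mo_{\X'}(\mbi k\mbi D')_{\mbi l}$ and $\mo_{\X'}((\mbi k-\mbi f_{s+1}^\ast)\mbi D')_{\mbi l}$ for suitable $(\mbi k,\mbi l)$. The point is to choose, for each degree-$(-1)$ bundle $\mc L$ on $\mc D_{s+1}'$, a representative $\mo_{\X'}(\mbi k\mbi D')_{\mbi l}$ whose restriction to $\mc D_{s+1}'$ is $\mc L$ and such that both this bundle and its shift by $\mc D_{s+1}'$ lie in $\mk D_{\X'}$; one reads off from (\ref{formula}) applied to the subdivided fan which classes $\mo_{\X'}(\mbi k\mbi D')_{\mbi l}$ occur in $\mk D_{\X'}$, and checks the required ones are among them. (Compare the structure of the root-stack-of-$\PP^2$ computation in \S\ref{subsection:root-of-plane}, where the analogous list is exactly the full strong exceptional collection.)

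The main obstacle I expect is precisely this last verification: matching the finite set of degree-$(-1)$ line bundles on $\mc D_{s+1}'\cong\sqrt[\mbi c]{\mbi D/\PP(c,c)}$ against the Frobenius summands in $\mk D_{\X'}$, and checking that the two-step Koszul complexes stay inside $\langle\mk D_{\X'}\rangle$ — i.e.\ that the ``shifted'' term $\mo_{\X'}((\mbi k-\mbi f_{s+1}^\ast)\mbi D')_{\mbi l}$ is also a Frobenius summand (or can itself be resolved by such, recursing if necessary). This is where one must use the specific combinatorics of the star-shaped subdivision and the hypothesis $b_1=b_2=c$; a clean way to organize it is to note that restriction to $\mc D_{s+1}'$ sends the relevant subset of $\mk D_{\X'}$ onto all of $\{\mc L\mid\deg\mc L=-1\}$ together with a neighbouring degree (say $\deg=0$ or $\deg=-2$), so that the Koszul triangle expresses each $j'_{\ast}\mc L$ of degree $-1$ via two honest members of $\mk D_{\X'}$. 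Once both pieces of the semi-orthogonal decomposition are shown to lie in $\langle\mk D_{\X'}\rangle$, since $\langle\mk D_{\X'}\rangle$ is a full triangulated subcategory stable under summands and sums, we conclude $\langle\mk D_{\X'}\rangle=D^b(\X')$, as desired.
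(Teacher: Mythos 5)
Your overall skeleton matches the paper's: reduce via Lemma \ref{orlov} to showing that both pieces of the semi-orthogonal decomposition lie in $\langle\mk D_{\X'}\rangle$, and handle the exceptional-divisor piece by restricting suitable Frobenius summands along the short exact sequence $0\to\mo_{\X'}(\mc D'_{\mbi u}-\mc D'_{s+1})\to\mo_{\X'}(\mc D'_{\mbi u})\to\mo_{\mc D'_{s+1}}(\mc D'_{\mbi u})\to 0$. However, there is a genuine error in your treatment of the piece $\mathbb{L}\psi^{\ast}D^b(\mc X)$: it is not true that $\psi^{\ast}\mc L$ is ``literally an element of $\mk D_{\X'}$'' for every $\mc L\in\mk D_{\mc X}$. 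Writing $\mc D_{\mbi u}=\sum_i\lfloor(\mbi u,b_i\mbi v_i)\rfloor\mc D_i$, the pullback via (\ref{picbup}) produces the coefficient $\lfloor cu_1\rfloor+\lfloor cu_2\rfloor$ on $\mc D'_{s+1}$, whereas the Frobenius summand $\mo_{\X'}(\mc D'_{\mbi u})$ carries the coefficient $\lfloor cu_1+cu_2\rfloor$; these differ by $1$ whenever the fractional parts carry (e.g.\ $c=2$, $u_1=u_2=0.3$). In the carry case $\psi^{\ast}\mo_{\mc X}(\mc D_{\mbi u})=\mo_{\X'}(\mc D'_{\mbi u}-\mc D'_{s+1})$, which need not be a Frobenius summand; the paper resolves this by running the same short exact sequence, using that the quotient $\mo_{\mc D'_{s+1}}(\mc D'_{\mbi u})$ then has degree $-1$ (condition (\ref{eqn:c1c2})) and hence already lies in the exceptional-divisor piece $\mc T'\subset\langle\mk D_{\X'}\rangle$. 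So the two halves of the argument are not independent: you must establish $\mc T'\subset\langle\mk D_{\X'}\rangle$ first and then feed it into the pullback step.

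For the piece $\mc T'$ itself your strategy is correct but you stop exactly at the verification you flag as the main obstacle. The paper's resolution of it is short: for fixed $l_1,l_2$ choose a generic $\mbi u$ with $l_1<cu_1<l_1+1$, $l_2<cu_2<l_2+1$ and $cu_1+cu_2=l_1+l_2+1$; then both $\mo_{\X'}(\mc D'_{\mbi u})$ and $\mo_{\X'}(\mc D'_{\mbi u}-\mc D'_{s+1})=\mo_{\X'}(\mc D'_{\mbi u-\e{}^t(1,1)})$ lie in $\mk D_{\X'}$ by (\ref{formula}), and the cokernel of the resulting Koszul sequence realizes every degree $-1$ line bundle on $\mc D'_{s+1}$ as $(l_1,l_2)$ varies mod $c$. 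You would need to supply this (or an equivalent) explicit choice to close the gap; as written, your proposal identifies the right triangles but does not verify that their outer terms are actually Frobenius summands.
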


\proof 
For $\mbi u\in \R^2$, we define 
$$
\mathcal{D}_{\mbi u}:=\sum_{i=1}^s 
\lfloor(\mbi u, b_i\mbi v_i)\rfloor 
\mathcal{D}_i,\quad
\mc D'_{\mbi u}:=\sum_{i=1}^{s+1} 
\lfloor(\mbi u, b_i\mbi v_i)\rfloor 
\mc D'_i.
$$
Recall that $ \mk D_{\mc X}=
\lbrace\mo_{\mc X}\left(\mc D_{\mbi u}\right)
\mid \mbi u\in [0,1)^2 \rbrace$ and $ \mk D_{\mc X'}=
\lbrace\mo_{\mc X'}\left(\mc D'_{\mbi u}\right)
\mid \mbi u\in [0,1)^2 \rbrace$
by (\ref{formula}).

Let $\mc T'$ denote ${j'_\ast}
\lbrace\mc L\in\Pic \mc D'_{s+1}\mid 
\deg\mc L=-1\rbrace$, 
and first we  show that 
$
\mc T' \subset \langle\mk D_{\X'}\rangle.
$
Note that 
$$
h(\mo_{\mc D'_{s+1}}
\left(l_1\mc D'_1+l_2\mc D'_2+k\mc D'_{s+1}\right))
=
\begin{pmatrix}
l_1+l_2-k\\
\Bar{l}_1\\
\Bar{l}_2\end{pmatrix}
\in \Z\oplus\Z_{c}\oplus\Z_{c}
$$ 
for the isomorphism $h$ in (\ref{deg}), hence 
$$
\mo_{\mc D'_{s+1}}\left(\mc D'_{\mbi u}\right)
\cong
\mo_{\mc D'_{s+1}}
\left(\lfloor cu_1\rfloor \mc D'_1+\lfloor cu_2\rfloor\mc D'_2+\lfloor cu_1+ cu_2\rfloor\mc D'_{s+1}\right)
\in \mc T'
$$ 
if 
\begin{equation}\label{eqn:c1c2}
\lfloor cu_1\rfloor+\lfloor cu_2\rfloor-\lfloor cu_1+ cu_2\rfloor=-1.
\end{equation}
%

%
Fix $l_1,l_2\in\Z$, and take a generic element $\mbi u$ in the set 
$$
\left\lbrace\mbi u={}^t(u_1,u_2)\in 
\R^2\ 
\Bigr|\  l_1< 
cu_1<l_1+1, l_2< cu_2<l_2+1, cu_1+cu_2=l_1+l_2+1
\right\rbrace.
$$
Then for  a sufficiently small real number $\e>0$,  we have
$$
\mo_{{\X'}}\left(\mc D'_{\mbi u-\epsilon{}^t(1,1)}\right)=\mo_{{\X'}}
\left(\mc D'_{\mbi u}-\mc D'_{s+1}\right).
$$
It follows from the exact sequence 
\begin{equation}\label{ex1}
0\to\mo_{{\X'}}\left(\mc D'_{\mbi u}-
\mc D'_{s+1}\right)\to\mo_{{\X'}}
\left(\mc D'_{\mbi u}\right)\to
\mo_{\mc D'_{s+1}}\left(\mc D'_{\mbi u}\right)\to 0
\end{equation}
that 
$\mo_{\mc D'_{s+1}}\left(\mc D'_{\mbi u}
\right)\in \langle \mk D_{{\X'}}\rangle$.
Since $\mbi u$ satisfies (\ref{eqn:c1c2}), 
we obtain $\mc T' \subset \langle\mk D_{\X'}\rangle$.

Next we show 
$
\mathbb{L}\psi^{\ast}\left(D^b(\mc X)\right)
\subset \langle  \mk D_{{\X'}}\rangle.
$
By the assumption $\langle  \mk D_{{\X}}\rangle=D^b(\X)$, 
it suffices to show that 
$\psi^{\ast}\mo_{\mc X}
(\mc D_{\mbi u})\in\langle \mk D_{{\X'}}\rangle$ 
for any $\mbi u\in \R^2$.
By (\ref{picbup}), we have 
$$
\psi^{\ast}\mo_{\mc X}(\mc D_{\mbi u})=
\begin{cases}
\mo_{{\X'}}(\mc D'_{\mbi u})              &\text{ if }\  \lfloor(\mbi u,c\mbi v_{s+1})\rfloor=\lfloor cu_1\rfloor + \lfloor cu_2\rfloor,\\ 
\mo_{{\X'}}(\mc D'_{\mbi u}-\mc D'_{s+1})&\text{ if }\ \lfloor(\mbi u,c\mbi v_{s+1})\rfloor=\lfloor cu_1\rfloor+\lfloor cu_2\rfloor+1.
\end{cases}
$$
Hence it is enough to consider the second case. 
Then we see by (\ref{eqn:c1c2}) that
$
\mo_{\mc D'_{s+1}}\left(\mc D'_{\mbi u}\right)
 \in \mc T' \subset \langle \mk D_{{\X'}}\rangle
$
and hence  
$\psi^{\ast}\mo_{\mc X}(\mc D_{\mbi u})\in 
\langle \mk D_{{\X'}}\rangle$ by (\ref{ex1}).

Now  Lemma~\ref{orlov} completes the proof.
\endproof


\section{Proof of Main theorem}\label{sec:mainthm}

The purpose of this note is to show the following.


\begin{thm}\label{thm}
For every two dimensional toric DM stack $\X$, 
the set $\mk D_\X$ generates $D^b(\X)$.
\end{thm}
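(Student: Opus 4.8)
The plan is to follow exactly the four-step reduction outlined in the introduction, chaining together the lemmas already proved in \S\ref{sec:4}--\S\ref{sec:root-plane} and \S\ref{sec:5}. First I would dispose of Step (i): by Lemma~\ref{reduction}, it suffices to prove that $\mk D_{\mc Y}$ generates $D^b(\mc Y)$ for every two dimensional toric DM \emph{orbifold} $\mc Y$ (taking $\mc Y=\mc X'^{\mathrm{rig}}$ for the root stack $\mc X'=\sqrt[\mbi a]{\mbi D/\mc X}$ of the given stack $\mc X$). So from now on $\mc X=\mc X(X,\sum b_iD_i)$ is an orbifold with coarse moduli space a complete toric surface $X$.

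Next, Step (ii): choose a torus-equivariant resolution of singularities $\Theta\colon Y\to X$, which by Remark~\ref{rem:2dim} is a finite composition of weighted blow-ups, and form the associated morphism of orbifolds $\theta\colon \mc Y=\mc X(Y,\sum^{s+t}b_iD_i)\to\mc X$. By Lemma~\ref{lem:van2*} we have $\mathbb R\theta_\ast\mo_{\mc Y}=\mo_{\mc X}$, so Lemma~\ref{lem} reduces the statement to the case where the coarse moduli space is \emph{smooth}, i.e. $\mc X=\sqrt[\mbi b]{\mbi D/X}$ for a smooth complete toric surface $X$ (cf. \S\ref{sec:2-5}). Then Step (iii): apply the strong factorization theorem for smooth complete toric surfaces to connect $X$ to $\PP^2$ by a chain of toric blow-ups (and blow-downs) at torus-fixed points; at the level of orbifolds each blow-up in this chain is, after arranging $b_1=b_2=c$ on the relevant cone, exactly a morphism $\psi\colon\mc X'\to\mc X$ of the type treated in \S\ref{subsec:blow-up}. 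Here I invoke Lemma~\ref{keylemma} (in the blow-up direction) and Lemma~\ref{lem} together with Lemma~\ref{lem:van2*} (in the blow-down direction, using $\mathbb R\psi_\ast\mo_{\mc X'}=\mo_{\mc X}$) to transport the generation property back and forth, thereby reducing to the case $\mc X=\sqrt[\mbi b]{\mbi D/\PP^2}$ for arbitrary $\mbi b={}^t(b_1,b_2,b_3)$.

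Finally, Step (iv): for $\mc X=\sqrt[\mbi b]{\mbi D/\PP^2}$ with general weights $\mbi b$, one more application of the root-stack machinery of \S\ref{sec:2-5} (realizing $\sqrt[\mbi b]{\mbi D/\PP^2}$ as a root stack over $\mc X^{\mathrm{can}}=\PP^2$, hence over $\sqrt[\mbi c]{\mbi D/\PP^2}$ for $\mbi c$ a common multiple, via Lemma~\ref{lem:rootc-vanish} and Lemma~\ref{lem}) reduces to the uniform case $b_1=b_2=b_3=c$. In that case \S\ref{subsection:root-of-plane} already shows, via \cite[Proposition~5.1]{BHu}, that $\mk D_{\mc X}$ is a full strong exceptional collection and in particular generates $D^b(\mc X)$. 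Assembling these reductions in order gives the theorem.

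The main obstacle I anticipate is Step (iii), specifically the bookkeeping needed to make the abstract strong factorization theorem compatible with the rather rigid hypotheses of Lemma~\ref{keylemma} and Lemma~\ref{orlov}: those lemmas require the blown-up cone to be \emph{non-singular} with $\mbi v_{s+1}=\mbi v_1+\mbi v_2$ and the two orbifold weights to coincide ($b_1=b_2=c$). Since the resolution in Step (ii) produces, a priori, a smooth surface $X$ but with uncontrolled orbifold structure, and since strong factorization only guarantees that \emph{some} chain of smooth blow-ups connects $X$ and $\PP^2$, one must check that along this chain each elementary blow-up can be matched (possibly after first passing to a further root stack to equalize the relevant $b_i$'s) with a morphism of the form handled in \S\ref{subsec:blow-up}, and that the direct-summand sets $\mk D$ are preserved under the equalizing root-stack operations. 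The vanishing statements $\mathbb R\psi_\ast\mo=\mo$ and $\mathbb R\theta_\ast\mo=\mo$ (Lemmas~\ref{lem:van2*}, \ref{lem:rootc-vanish}) are what make Lemma~\ref{lem} applicable in the contracting direction, and the two-sided nature of the argument --- generation pushes forward under contractions and, by Lemma~\ref{keylemma}, pulls back under the specific blow-ups --- is what ultimately lets the chain to $\PP^2$ be traversed.
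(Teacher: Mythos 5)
Your proposal reproduces the paper's four-step architecture, and steps (i), (ii) and the final appeal to \S\ref{subsection:root-of-plane} are exactly what the paper does. The genuine gap is in your step (iii), and you have in fact located it yourself without closing it. Lemma \ref{keylemma} only applies to a blow-up of a non-singular cone whose two rays carry \emph{equal} weights $b_1=b_2=c$ (and then the exceptional ray automatically acquires weight $c$ as well). If you run the strong-factorization chain starting from $\mc X(X,\sum b_iD_i)$ with the weights left arbitrary, the cones subdivided along $Z\to X$ and $Z\to\PP^2$ will in general carry unequal weights, the new rays acquire weights like $\mathrm{lcm}(b_1,b_2)$ by (\ref{eqn:wtb}), and Lemma \ref{keylemma} simply does not apply at those steps. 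Your suggested repair --- ``arranging $b_1=b_2=c$ on the relevant cone,'' ``possibly after first passing to a further root stack to equalize the relevant $b_i$'s'' --- is not carried out, and inserting root constructions in the middle of the chain is delicate: each such insertion replaces the stack you are working with, so you would have to check that the equalizing root construction commutes with the remaining blow-ups of the chain and that the transfers of generation (which go root stack $\to$ base by Lemmas \ref{lem} and \ref{lem:rootc-vanish}, and base $\to$ blow-up by Lemma \ref{keylemma}) still compose in a consistent direction.

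The paper's proof resolves this with a single move placed \emph{before} strong factorization, not after: once $X$ is smooth, it passes to the root construction $\mc X'=\mc X\bigl(X,c\sum_{i}D_i^X\bigr)\to\mc X\bigl(X,\sum_i b_iD_i^X\bigr)$ with $c=b_1b_2\cdots b_s$ (Lemmas \ref{lem} and \ref{lem:rootc-vanish} reduce generation for $\mc X$ to generation for $\mc X'$). Then \emph{every} orbifold in the chain
$\mc X'\leftarrow\mc X\bigl(Z,c\sum D_i^Z\bigr)\rightarrow\mc X\bigl(\PP^2,cD_1+cD_2+cD_3\bigr)$
has all weights uniformly equal to $c$, so Lemma \ref{keylemma} applies at every elementary blow-up going up from $\PP^2$ to $Z$, and Lemmas \ref{lem} and \ref{lem:van2*} bring the generation back down from $Z$ to $\mc X'$. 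With this ordering your separate step (iv) (reducing arbitrary $\mbi b$ on $\PP^2$ to uniform weights) becomes unnecessary, because the chain already terminates at the uniform-weight root stack of $\PP^2$. You should move the weight-equalization to immediately after step (ii) and take $c$ to be the product of all the $b_i$; as written, the middle of your chain cannot be traversed.
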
 

\proof
By Lemma~\ref{reduction}, we can reduce the proof of the statement 
to the case  $\X$ is an orbifold. 
Then there exist integers $b_i\in \Z_{>0}$ 
such that $\mc X=\mc X\left(X,\sum_{i=1}^sb_i D_i^X\right)$ 
as in \S \ref{sec:2-1}.

Consider a torus equivariant resolution of 
$X$ and the associated birational morphism 
between toric DM orbifolds as in Remark \ref{rem:2dim}.
Then by Lemmas \ref{lem} and \ref{lem:van2*}
we can reduce to the case $X$ is smooth.

We have a root construction morphism
$$
\mc X':=\mc X\left(X,c\sum_{i=1}^sD_i^X\right)
\to \mc X=\mc X\left(X,\sum_{i=1}^sb_i D_i^X\right)
$$ 
for $c=b_1b_2\cdots b_s$.
By lemmas \ref{lem} and \ref{lem:rootc-vanish}, 
it suffices to check the statement for $\mc X'$.

The strong factorization theorem 
(\cite[Theorem~1.28~(2)]{Od}) implies that  
there exists a smooth complete toric surface $Z$ and
morphisms $\Phi,\Psi$;
$$
X\stackrel{\Phi}\longleftarrow Z\stackrel{\Psi} \longrightarrow\PP^2,
$$
where $\Phi$ and $\Psi$ are compositions of 
torus equivariant blow-ups.
Associated with $\Phi$ and $\Psi$,
we obtain a compositions of blow-ups on toric DM orbifolds 
$$
\mc X'\stackrel{\phi}\longleftarrow \mc X \left (Z,c\sum_{i=1}^tD_i^Z\right)
\stackrel{\psi} \longrightarrow
 \mc X\left(\PP^2,cD_1^{\PP^2}+cD_2^{\PP^2}+cD_3^{\PP^2}\right).
$$ 
By Lemmas~\ref{lem}, \ref{lem:van2*} and \ref{keylemma},
it suffices to show the statement in the case where 
$\mc X$ is the root stack $\mc X
\left(\PP^2,cD_1^{\PP^2}+cD_2^{\PP^2}+cD_3^{\PP^2}\right)$
of the projective plane $\PP^2$,
which is already shown in  \S\ref{subsection:root-of-plane}.
\endproof

\begin{rem}
Suppose that $\mc X$ is a toric DM stack 
and consider the following Cartesian diagram:
\begin{equation*}
\xymatrix{ 
\mc X\times \PP ^n  \ar[r]^{p_1} \ar[d]_{p_2}&\PP^n\ar[d]^{q_2}\\
\mc X \ar[r]_{q_1}                       & \Spec \C    \\
}
\end{equation*}
Then we have 
$$
\R p_{2*}\mc O_{\mc X\times \PP ^n}=
\R p_{2*}p_1^*\mc O_{\PP ^n}=
q_1^*\R q_{2*}\mc O_{\PP ^n}=
\mc O_{\X}
$$
by the flat base change theorem. 
Hence by Lemma~\ref{lem},
if $\mk D_{\mc X\times \PP ^n}$ 
generates $D^b(\mc X\times \PP ^n)$,
then  $\mk D_{\mc X}$ generates $D^b(\mc X)$. 
In particular, $\mk D_{\mc X}$ generates 
$D^b(\mc X)$ for $0$ or $1$-dimensional 
toric DM stacks $\mc X$ by Theorem \ref{thm}.
\end{rem}

\begin{rem}
An obstruction to extend Main Theorem to higher dimension is the following. For higher dimensional cases, we can get a statement similar to Lemma 6.4 again, when the center of a blow-up is zero-dimensional. 
However, when the dimension of the center is greater than zero, we do not know whether a similar statement holds.
\end{rem}


\section{Full strong exceptional collections}
\label{sec:6}
In this section, 
we choose a full strong exceptional 
collection from the set $\mk D_{\X}$,  
in several examples of one or two dimensional toric stacks $\X$. 

\subsection{Examples}
First let us show Lemma \ref{lem:nef}. Let $\X$ be a toric DM orbifold and assume that
a coarse moduli space $X$ of $\X$ is smooth. Recall that we have a root construction morphism
$\pi\colon \X\to X$ as in \S \ref{sec:2-5}.
Then for any divisor $\mc D$ on $\X$, there exists a divisor $D$ on $X$ such that 
$\pi^* \mc O_X(D)=\mc O_\X(\mc D)^{\otimes m}$ for some $m>0$. Then we know that 
$\mc D$ is nef on $\X$ if and only if so is $D$ on $X$. Under this notation, we have the following:     

\begin{lem}\label{lem:nef}
If $\mc D$ is nef, then we have $H^i(\mc X,\mc O_{\mc X}(\mc D))=0$ for $i>0$.
\end{lem}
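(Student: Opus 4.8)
The plan is to deduce the vanishing on $\X$ from Demazure vanishing on its (smooth) coarse moduli space $X$, using a sufficiently divisible Frobenius morphism to pass between $\mc D$ and a twist of it that is pulled back from $X$.

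First I would reduce to the case where $\mc D$ is torus-invariant: since $\X$ is an orbifold, $\Pic\X\cong\DG(\beta)$ is generated by the classes of the toric divisors, so $\mo_\X(\mc D)\cong\mo_\X(\sum_j k_j\mc D_j)$ for suitable $k_j\in\Z$, and cohomology depends only on the isomorphism class. As $X$ is smooth we have $\X=\sqrt[\mbi b]{\mbi D/X}$, and I would fix the root construction morphism $\pi\colon\X\to X$ of \S\ref{sec:2-5}; then $\R\pi_{\ast}\mo_\X=\mo_X$ by Lemma~\ref{lem:rootc-vanish}, and $\pi^{\ast}\mo_X(D_j)=\mo_\X(b_j\mc D_j)$ by (\ref{eqn:pullback}). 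Next, choose an integer $m$ that is \emph{sufficiently divisible} in two senses simultaneously: divisible by every $b_j$, and large enough that $\mo_\X$ is a direct summand of $F_{\ast}\mo_\X$ for $F=F_m$ (possible since $\mo_\X\in\mk D_\X$ by (\ref{formula})). For this $F$ the matrix $C$ in (\ref{mor}) equals $mI$, so (\ref{eqn:pullback}) gives $F^{\ast}\mo_\X(\mc D)=\mo_\X(m\mc D)$, while on the other hand $m\mc D=\sum_j mk_j\mc D_j=\sum_j\tfrac{mk_j}{b_j}\,b_j\mc D_j=\pi^{\ast}D$ where $D:=\sum_j\tfrac{mk_j}{b_j}D_j$ is a torus-invariant Cartier divisor on the smooth variety $X$.

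Combining these: since $F_m$ is a finite morphism, the projection formula together with $\R\pi_{\ast}\mo_\X=\mo_X$ yields that $H^i(\X,\mo_\X(\mc D))$ is a direct summand of
$$
H^i(\X,\mo_\X(m\mc D))=H^i(\X,F^{\ast}\mo_\X(\mc D))=H^i(\X,F_{\ast}\mo_\X\otimes\mo_\X(\mc D))=H^i(\X,\pi^{\ast}\mo_X(D))=H^i(X,\mo_X(D)).
$$
Now $\mc D$ is nef on $\X$ if and only if $D$ is nef on $X$ (this is exactly the equivalence recalled just before the statement, since $\pi^{\ast}\mo_X(D)=\mo_\X(\mc D)^{\otimes m}$), so $D$ is a nef Cartier divisor on the smooth complete toric variety $X$; Demazure vanishing then gives $H^i(X,\mo_X(D))=0$ for $i>0$, whence $H^i(\X,\mo_\X(\mc D))=0$ for $i>0$.

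Most steps are bookkeeping with the constructions of \S\ref{sec:2}--\S\ref{sec:3}; the one genuinely needed idea is that, although $\mo_\X(\mc D)$ itself need not descend to $X$, a sufficiently divisible Frobenius twist $\mo_\X(m\mc D)$ does, and its cohomology controls that of $\mo_\X(\mc D)$. The point to be careful with is choosing a single $m$ satisfying all the divisibility conditions and checking $F_m$ is affine, so that the projection-formula identity $H^i(\X,\mo_\X(m\mc D))=H^i(\X,F_{\ast}\mo_\X\otimes\mo_\X(\mc D))$ is legitimate. An alternative that avoids Frobenius would be to push $\mo_\X(\mc D)$ directly down to $X$ via the root-stack decomposition; that route instead forces one to prove nefness of the round-down summands $\sum_j\lfloor k_j/b_j\rfloor D_j$ on $X$, which need not hold termwise, and it is precisely this difficulty that the Frobenius argument circumvents.
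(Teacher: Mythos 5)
Your proposal is correct and is essentially the paper's own argument: embed $H^i(\X,\mo_\X(\mc D))$ as a direct summand of $H^i(\X,\mo_\X(\mc D)\otimes F_{m*}\mo_\X)\cong H^i(\X,F_m^{\ast}\mo_\X(\mc D))\cong H^i(\X,\mo_\X(\mc D)^{\otimes m})\cong H^i(\X,\pi^{\ast}\mo_X(D))$ and conclude by nef (Demazure) vanishing on the smooth toric coarse moduli space. The extra bookkeeping you supply (choosing $m$ divisible by the $b_j$, noting $\mo_\X\in\mk D_\X$, affineness of $F_m$) only makes explicit what the paper leaves implicit.
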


\proof
Since $\mc O_{\X}$ is a direct summand of $F_{m*}\mc O_{\X}$, we have 
\begin{align*}
H^i(\mc X,\mc O_{\mc X}(\mc D))\subset &H^i(\mc X,\mc O_{\mc X}(\mc D)\otimes F_{m*}\mc O_{\X})
\cong  H^i(\mc X,F^*_m\mc O_{\mc X}(\mc D))\\
\cong  &H^i(\mc X,\mc O_{\mc X}(\mc D)^{\otimes m}) \cong H^i(\mc X,\pi^* \mc O_X(D))=0,
\end{align*}
which completes the proof.
Note that the last equality is a consequence of  the nef vanishing  on toric varieties. 
\endproof

Put 
$$
\mk D^{\text{nef}}_{\X}:=\left\lbrace
\mathcal{O}_{\X}(\mc D)\in\mk D_{\X}\mid 
-\mc D \text{ is nef}\right\rbrace.
$$
If we have
$$
\mk D_{\X}\subset\langle
\mk D^{\text{nef}}_{\X}\rangle,
$$ 
then the set $\mk D^{\text{nef}}_{\X}$ 
forms a full strong exceptional collection 
by \cite[Lemma~3.8(i)]{Ue}, Lemma \ref{lem:nef} and Theorem~\ref{thm}.

Below we give calculation only in some typical 
cases and omit it in the rest.
We leave it to readers.  
 
(1)
For $\X=\PP(a_1,a_2,a_3)$ with $(a_1,a_2,a_3)=1$ or $\X=\X(\PP^1\times\PP^1, \sum b_iD_i)$ 
with arbitrary $b_1,b_2,b_3,b_4\in\Z_{>0}$, we have $\mk D^{\text{nef}}_{\X}=\mk D_{\X}$.
Hence $\mk D_{\X}$ forms a full strong exceptional collection. 

(2)  Blow-up at a point on $\PP^2$, and then 
we obtain the Hirzebruch surface $\mathbb F_1$.
For the toric DM orbifold $\X=\X(\mathbb F_1, D_1+D_2+2D_3+D_4)$, 
we have $\mk D^{\text{nef}}_{\X}\neq \mk D_{\X}$,
 where $D_1,D_3$ are fiber of the ruling and 
 $D_2, D_4$ are negative and positive sections respectively.
In fact we take vectors 
$$
\mbi v_1=\begin{pmatrix}1\\0\end{pmatrix}, \mbi v_2=\begin{pmatrix}0\\1\end{pmatrix}, \mbi v_3=\begin{pmatrix}-2\\2\end{pmatrix} \text{ and }\mbi v_4=\begin{pmatrix}0\\-1\end{pmatrix}\in N,
$$ 
corresponding to divisors $\mathcal{D}_1,
\mathcal{D}_2, \mathcal{D}_3$ and $\mathcal{D}_4$ 
on $\X$. 
Then we obtain 
$$
\mk D_{\X}=\{\mo_{\X},\mo_{\X}(-\mathcal{D}_3),\mo_{\X}(-2\mathcal{D}_3),\mo_{\X}(\mathcal{D}_3-\mathcal{D}_4),\mo_{\X}(-\mathcal{D}_4),\mo_{\X}(-\mathcal{D}_3-\mathcal{D}_4),\mo_{\X}(-2\mathcal{D}_3-\mathcal{D}_4)\}
$$
and $\mk D^{\text{nef}}_{\X}=
\mk D_{\X}\setminus 
\lbrace\mo_{\X}(\mathcal{D}_3-\mathcal{D}_4)\rbrace$.
In this case we can see that 
$\mo_{\X}(\mathcal{D}_3-\mathcal{D}_4)\in 
\langle\mk D^{\text{nef}}_{\X}\rangle$, and hence
$\mk D_{\X}\subset\langle\mk D^{\text{nef}}_{\X}\rangle$.

(3)
Finally we take the toric DM orbifold $\X$ defined by vectors 
$$
\mbi v_1=\begin{pmatrix}1\\0\end{pmatrix}, \mbi v_2=\begin{pmatrix}0\\1\end{pmatrix}, \mbi v_3=\begin{pmatrix}-2\\2\end{pmatrix}, \mbi v_4=\begin{pmatrix}-1\\0\end{pmatrix}\text{ and }\mbi v_5=\begin{pmatrix}0\\-1\end{pmatrix}
$$ 
in $N$ corresponding to divisors 
$\mathcal{D}_1,\mathcal{D}_2, \mathcal{D}_3,\mathcal{D}_4$ 
and $\mathcal{D}_5$ on $\X$. 
Then we obtain 
\begin{equation*}
\begin{split}
\mk D_{\X}=
&\{\mo_{\X},\mo_{\X}(-\mathcal{D}_3-\mathcal{D}_4),\mo_{\X}(-2\mathcal{D}_3-\mathcal{D}_4),\mo_{\X}
(\mathcal{D}_3-\mathcal{D}_5),\mo_{\X}(-\mathcal{D}_5),\\
&\mo_{\X}(-2\mathcal{D}_3-\mathcal{D}_4-\mathcal{D}_5),\mo_{\X}(-\mathcal{D}_3-\mathcal{D}_4-\mathcal{D}_5),\mo_{\X}
(-\mathcal{D}_4-\mathcal{D}_5),\mo_{\X}(\mathcal{D}_3-\mathcal{D}_4-\mathcal{D}_5)\}
\end{split}
\end{equation*}
and $\mk D^{\text{nef}}_{\X}=\mk D_{\X}
\setminus \{ \mc L_1,\mc L_2,\mc L_3\}$,
where we define
$$
\mc L_1:=\mc O_{\X}(\mathcal{D}_3-\mathcal{D}_5), 
\mc L_2:=\mc O_{\X}(-\mathcal{D}_3-\mathcal{D}_4)\text{ and }
\mc L_3:=\mc O_{\X}(\mathcal{D}_3-\mathcal{D}_4-\mathcal{D}_5).
$$
In this case, since $\#\mk D^{\text{nef}}_{\X}=
6<\rk K(\X)=7$, the set $\mk D^{\text{nef}}_{\X}$ 
does not form a full exceptional collection 
(see Remark \ref{rem:Grothendieck}),
and thus we know $\mk D_{\X}\not\subset\langle\mk D^{\text{nef}}_{\X}\rangle$.
However we can see the subset 
$$
\mc S:=\mk D_{\X}\setminus \lbrace\mo_{\X}(-2\mathcal{D}_3-
\mathcal{D}_4),\mo_{\X}(-2\mathcal{D}_3-\mathcal{D}_4-
\mathcal{D}_5)\rbrace
$$
of $\mk D_{\X}$ is a full strong exceptional 
collection as follows.

By the exact sequence
$$
0\to\mo_{\X}\to\mo_{\X}(\mathcal{D}_3-\mathcal{D}_5)\to
\mo_{\mathcal{D}_3}(\mathcal{D}_3)\to 0,
$$
we have $\mo_{\mathcal{D}_3}(-\mathcal{D}_3-\mathcal{D}_4)=\mo_{\mathcal{D}_3}(\mathcal{D}_3)\in\langle \mc S\rangle$.
Moreover by the exact sequence
\begin{equation}\label{ex2}
0\to\mo_{\X}(-2\mathcal{D}_3-\mathcal{D}_4-i\mc D_5)\to\mo_{\X}(-\mathcal{D}_3-\mathcal{D}_4-i\mc D_5)
\to\mo_{\mathcal{D}_3}(-\mathcal{D}_3-\mathcal{D}_4)\to 0
\end{equation}
for $i=0,1$,
we have $\mo_{\X}(-2\mathcal{D}_3-\mathcal{D}_4
-i\mc D_5)\in\langle \mc S\rangle$.
Hence we obtain  $ \mk D_{\X} \subset 
\langle \mc S \rangle$ and hence  $\mc S$ 
generates $D^b(\X)$.
According to  \cite[Proposition~4.1]{BHu}, 
we show below that  for any $\mc L,\mc L'\in \mc S$,
$\Ext^i_{\X}\left(\mc L,\mc L' \right)= 0$ 
for $i\ne 0$ to conclude that $\mc S$ is 
a full strong exceptional collection.

For every $\mbi r=(r_i)\in\Z^5$, 
we denote by $\Supp(\mbi r)$ 
the simplicial complex on five vertices 
$\{1,\ldots,5\}$ which consists of all subsets 
$J\subset\{1,\ldots,5\}$ such that $r_i\ge 0$ for all $i\in J$
and there exists a cone in the fan $\Delta$ 
determining $\X$ that contains all $\mbi v_i, i\in J$.
By \cite[Proposition~4.1]{BHu}
\footnote{There is a typo in the statement 
\cite[Proposition~4.1]{BHu}.
Precisely, the cohomology $H^p(\mc X,\mc L)$
is obtained by $(\rk N-p-1)$-th reduced homology of $\Supp(\mbi r)$.} 
we see that for any line bundle $\mathcal{L}$ on $\X$, we have 
\begin{equation}\label{eqn:BHu}
\dim H^1(\X,\mathcal{L})=\sum_{\mbi r}
\left(\#\lbrace 
\text{ connected components of } \Supp(\mbi r) 
\rbrace-1\right),
\end{equation}
where the summation is taken over the set of 
all $\mbi r=(r_i)\in\Z^5$ such that 
$\mc O_{\X }(\sum_{i=1}^5r_iD_i)\cong\mathcal{L}$.

For example, we know from (\ref{eqn:BHu}) that 
\begin{align*}
\Ext^1_{\X}\left(\mc L_1,\mo_{\X}(-2\mathcal{D}_3-\mathcal{D}_4)\right)
=H^1(\X,\mo_{\X}(-3\mc D_3-\mc D_4+D_5))\neq 0,
\end{align*}
since 
$-3\mc D_3-\mc D_4+\mc D_5\sim  -\mc D_1-\mc D_3+\mc D_5$
and $\Supp({} ^t (-1,0,-1,0,1))$ has $2$ connected components.
By \cite[Proposition~4.1]{BHu} and easy but 
tedious computations as above, 
we can see that $\mc S$ forms a full strong
exceptional collection.

Moreover again by (\ref{eqn:BHu}) we have
\begin{equation*}
\begin{split}
\Ext^1_{\X}\left(\mc L_i,\mo_{\X}(-2\mathcal{D}_3-\mathcal{D}_4)\right)\neq 0 \text{ and }
\Ext^1_{\X}\left(\mc L_i,\mo_{\X}(-2\mathcal{D}_3-\mathcal{D}_4-\mathcal{D}_5)\right)\neq 0
\end{split}
\end{equation*}
for all $i=1,2,3$.  
Hence we conclude that $\mc S$ is a unique subset 
of $\mk D_{\X}$ which forms a full exceptional collection.

We remark that any two dimensional toric Fano orbifold 
has a full strong exceptional collection of 
line bundles by \cite[Theorem~7.3]{BHu}.


\subsection{Toric DM orbifolds with $\rank \Pic \X=1.$}

As an application of Main Theorem, we also have the following:


\begin{thm}
Let $\X$ be an one or two dimensional toric DM stack.
Suppose furthermore that its rigidification $\mc X^{\rig}$ 
has the  Picard group of rank $1$.
(Notice that this condition is automatically satisfied when $\X$ is $1$-dimensional.)
Then the set $\mk D_\X$ forms a full strong exceptional collection. 
\end{thm}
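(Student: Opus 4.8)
The strategy is to reduce to known cases via the rigidification and the structure theory already developed, then handle the base case by a direct computation with $\mk D_{\X}$ and the nef vanishing of Lemma~\ref{lem:nef}. First I would invoke Corollary~\ref{cor:orb-vs-stack} and the reduction in \S\ref{subsec:rtto}: since $\rank\Pic\X^{\rig}=1$, it suffices by Lemma~\ref{reduction} and the block decomposition $\mk D_{\mc X'}=\bigcup_{\mbi l}(\Psi'^{\ast}\mk D_{\mc X'^{\text{rig}}})\otimes\mo_{\mc X',\mbi l}$ to produce a full strong exceptional collection inside $\mk D_{\X^{\rig}}$ — tensoring it by the finitely many line bundles $\mo_{\mc X,\mbi l}$ and using that the root-stack decomposition is semiorthogonal (the repeated application of \cite[Lemma~4.1]{IU} in Theorem~\ref{thm:orb-vs-stack}) then yields one for $\X$ itself, with the exceptionality of each block automatic from Corollary~\ref{cor:orb-vs-stack}. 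So I may assume $\X=\X^{\rig}$ is a toric DM orbifold with $\rank\Pic\X=1$.

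**The one-dimensional and $\PP(\mbi a)$ cases.** When $\X$ is one-dimensional with $\rank\Pic\X=1$, its rigidification is $\PP^1$ (or a trivial-coarse-space point case), and $\X$ is a root stack $\sqrt[\mbi c]{\mbi D/\PP^1}$, essentially a weighted $\PP(a_1,a_2)$ up to the analysis in \S\ref{subsection:root-of-plane}; the formula (\ref{formula}) then exhibits $\mk D_{\X}$ explicitly as a set of line bundles of consecutive degrees, and one checks directly — as in Example~(1) of \S\ref{sec:6} — that $\mk D^{\text{nef}}_{\X}=\mk D_{\X}$, so $\mk D_{\X}$ is full strong exceptional by Lemma~\ref{lem:nef}, \cite[Lemma~3.8(i)]{Ue} and Theorem~\ref{thm}. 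In dimension two with $\rank\Pic\X^{\rig}=1$, the coarse space $X$ of $\X^{\rig}$ has $\rank\Pic X=1$ as well, forcing $X=\PP^2$ or $X=\PP(a_1,a_2,a_3)$; via a root construction morphism and Lemma~\ref{lem:rootc-vanish}, together with the blow-up machinery of \S\ref{sec:5}, this reduces to $\X$ being a root stack of $\PP(\mbi a)$. The weighted case $\PP(\mbi a)$ reduces in turn to the computation already carried out for $\PP(a_1,a_2,a_3)$ with $(a_1,a_2,a_3)=1$ in Example~(1) and for the equal-weight root stack $\sqrt[\mbi c]{\mbi D/\PP^2}$ in \S\ref{subsection:root-of-plane} via \cite[Proposition~5.1]{BHu}.

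**Finishing via nef vanishing.** For the remaining cases I would show $\mk D^{\text{nef}}_{\X}=\mk D_{\X}$, i.e.\ that every summand $\mo_{\X}(\mc D)$ appearing in (\ref{formula}) has $-\mc D$ nef. This is where $\rank\Pic\X=1$ is used crucially: nefness of a divisor is detected after pulling back along the root construction morphism $\pi\colon\X\to X$ to the coarse space, and on a Picard-rank-one toric variety the nef cone is a half-line, so each class is nef precisely when it lies on the correct side of $0$; the floor-function description of $\mk D_{\X}$ from (\ref{formula}) places each $\mc D_{\mbi u}$ (for $\mbi u$ in the half-open box) on the anti-nef side, exactly as in the $\PP(a_1,a_2,a_3)$ and $\PP^1\times\PP^1$ computations of Example~(1). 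Once $\mk D_{\X}\subset\langle\mk D^{\text{nef}}_{\X}\rangle$ holds trivially because the two sets coincide, Lemma~\ref{lem:nef} gives $\Hom^i_{\X}(\mo_{\X}(\mc D),\mo_{\X}(\mc D'))=H^i(\X,\mo_{\X}(\mc D'-\mc D))=0$ for $i\neq0$ whenever $\mc D'-\mc D$ is nef, which by \cite[Lemma~3.8(i)]{Ue} upgrades the generation of Theorem~\ref{thm} to a full strong exceptional collection.

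**Main obstacle.** The delicate point is the bookkeeping when reassembling the collection for $\X$ from the one on $\X^{\rig}$: one must verify that the blocks $(\Psi'^{\ast}\mk D_{\X^{\rig}})\otimes\mo_{\mc X,\mbi l}$, ordered appropriately in $\mbi l$ and compatibly with the order inside each block, really form a \emph{strong} exceptional collection and not merely a full one — the semiorthogonality from Theorem~\ref{thm:orb-vs-stack} controls the $\Hom$'s between different blocks in one direction, but strongness additionally requires the vanishing of higher $\Ext$'s \emph{within} each twisted block, which follows because tensoring by the line bundle $\mo_{\mc X,\mbi l}$ is an autoequivalence and preserves the strong exceptionality established on $\X^{\rig}$. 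Verifying that no new nonzero higher $\Ext$'s appear between blocks in the "wrong" direction, using the explicit grading of the root-stack decomposition, is the part requiring the most care, though it is ultimately a finite check analogous to the argument in the proof of Corollary~\ref{cor:kawamata}.
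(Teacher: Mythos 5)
Your route diverges from the paper's and has gaps precisely where the work happens. The paper's proof is short: after reducing to the orbifold case by Corollary \ref{cor:orb-vs-stack}, it uses (\ref{formula}) to show that every $\mc L\in\mk D_\X$ satisfies $\mc L^{\otimes m}\cong\mc O_\X(-\sum_i r_i\mc D_i)$ with $0\le r_i<m$, hence $\deg K_\X<\deg\mc L\le 0$. Thus $\mk D_\X$ sits inside the explicit set $\mk L=\lbrace\mc L\mid \deg K_\X<\deg\mc L\le 0\rbrace$, which is already a full strong exceptional collection by \cite[Proposition~5.1]{BHu}; since $\mk D_\X$ generates $D^b(\X)$ by Theorem~\ref{thm}, it must coincide with $\mk L$. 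No case analysis on the coarse space, no blow-ups, no nef vanishing.

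Two concrete problems with your argument. First, the proposed reduction ``via the blow-up machinery of \S\ref{sec:5}'' to root stacks of $\PP(\mbi a)$ cannot work: Lemmas \ref{lem} and \ref{keylemma} transfer only \emph{generation}, never strong exceptionality, and they run in the wrong direction for your purpose; besides, a two-dimensional orbifold with $\rank\Pic=1$ has a fan with exactly three rays and is not related to $\PP^2$ by the morphisms of \S\ref{sec:5} while keeping Picard rank one. Second, the ``finishing via nef vanishing'' step does not deliver strongness. Even granting $\mk D^{\text{nef}}_\X=\mk D_\X$ (which is true here, since each $\mc L^{\otimes m}$ is anti-effective and effective implies nef in Picard rank one), Lemma \ref{lem:nef} only gives $H^{>0}(\X,\mc O_\X(\mc D'-\mc D))=0$ when $\mc D'-\mc D$ is nef; the difference of two anti-nef divisors need not be nef in either direction, so vanishing of $\Ext^i$ for \emph{all ordered pairs} is not a consequence of $\mk D^{\text{nef}}_\X=\mk D_\X$ alone. (Lemma \ref{lem:nef} is moreover stated only for orbifolds with smooth coarse space, whereas here the coarse space is generally a singular fake weighted projective plane.) What actually closes this gap is the degree window: any difference of two bundles of degree in $(\deg K_\X,0]$ has degree $>\deg K_\X$, and the resulting cohomology vanishing is exactly the content of \cite[Proposition~5.1]{BHu} that the paper cites. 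Your worries in the last paragraph about reassembling blocks over the rigidification are likewise unnecessary: the paper disposes of that step with a single application of Corollary \ref{cor:orb-vs-stack}.
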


\begin{proof}
By Corollary \ref{cor:orb-vs-stack}, it suffices to show the statement for toric DM orbifolds.
Take the linear function $\deg\colon \Pic \X\to \Z$ which takes value $1$ on 
the positive generator of $\Pic \X$. 

We first show that $\mk D_\X$ is contained in the set $\mk L$ 
of line bundles $\mc L$ satisfying $\deg K_{\X}<\deg \mc L\le 0$.  
Take an arbitrary line bundle $\mc L$ in $\mk O_\X$. 
Then by (\ref{formula}), $\mc L$ is of the form 
$
\mo_\X\left( \sum_i \lfloor  \frac{(\mbi u,b_i\mbi v_i)}{m} \rfloor  \mc D_i \right)
$ 
for some $m\in \Z_{>0}$ and $\mbi u\in \Z^n \cap [0,m-1]^n$. Since 
$$
(\mbi u,b_i\mbi v_i)=m \lfloor \frac{(\mbi u,b_i\mbi v_i)}{m}\rfloor +r_i
$$
for some integers $r_i$ with $0\le r_i < m$, the divisor $m \sum_i \lfloor  \frac{(\mbi u,b_i\mbi v_i)}{m} \rfloor  \mc D_i$ is linearly equivalent
to the divisor $-\sum_i r_i  \mc D_i$. 
In particular, we have $\mc L^{\otimes m}\cong \mc O_\X( -\sum_i r_i  \mc D_i)$, and hence
we conclude that $\deg K_\X< \deg \mc L\le 0$.

By \cite[Proposition 5.1]{BHu} the set $\mk L$ 
forms a full strong exceptional collection. 
Since $\mk D_\X$ generates $D^b(\X)$, 
the set $\mk D_\X$ coincides with the collection they chose.
Therefore the set $\mk D_\X$ forms a full strong exceptional collection.
\end{proof}

\noindent
Ryo Ohkawa

Research Institute for Mathematical Sciences, 
Kyoto University, 
Kyoto, 606-8502, Japan

{\em e-mail address}\ : \  ohkawa@kurims.kyoto-u.ac.jp

\ \\

\noindent
Hokuto Uehara

Department of Mathematics
and Information Sciences,
Tokyo Metropolitan University,
1-1 Minamiohsawa,
Hachioji-shi,
Tokyo,
192-0397,
Japan 

{\em e-mail address}\ : \  hokuto@tmu.ac.jp

\end{document}